\def\N{\mathbb N}
\def\R{\mathbb{R}}
\newtheorem{theorem}{Theorem}
\newtheorem{corollary}[theorem]{Corollary}
\newtheorem{definition}[theorem]{Definition}
\newtheorem{example}[theorem]{Example}
\newtheorem{lemma}{Lemma}
\newtheorem{fact}{Fact}
\newtheorem{proposition}{Proposition}
\newtheorem*{question}{Question}
\DeclareMathOperator{\gal}{Gal}
\DeclareMathOperator{\Min}{Min}
\DeclareMathOperator{\Sup}{Sup}
\title[Injective envelopes and Ferrers languages]{Injective envelopes of transition systems and Ferrers languages \dagger}
\author[M.Kabil]{Mustapha Kabil}
\address{Laboratoire Math\'ematiques et Applications, D\'epartement de Math{\'e}matiques, Facult\'e des Sciences et Techniques, Universit\'e Hassan II -Casablanca, BP 146 Mohammedia, Morocco. }
\email{kabilfstm@gmail.com}
\author [M. Pouzet]{Maurice Pouzet}
\address{Univ. Lyon, Universit\'e Claude-Bernard Lyon1, CNRS UMR 5208, Institut Camille Jordan,  43 bd. 11 Novembre 1918, 69622 Villeurbanne Cedex, France and Mathematics \& Statistics Department, University of Calgary, Calgary, Alberta, Canada T2N 1N4}
\email{pouzet@univ-lyon1.fr, mpouzet@ucalgary.ca }
\date{\today}
\keywords{Metric spaces, Injective envelopes, Transition systems, Ferrers languages, Ordered sets, Interval orders, Well-quasi-order}
\subjclass{Primary 06A15, 06D20, 46B85, 68Q70; Secondary  68R15 }
\begin{document}

\maketitle

 \dedicatory {\dagger \; Dedicated to the memory of Maurice Nivat}

\begin{abstract} We consider reflexive and involutive  transition systems over an ordered alphabet $A$ equipped with an involution. We give a description of the injective envelope of any two-element 
set in terms of Galois lattice, from which we derive a test of its
finiteness. Our description leads to the notion of Ferrers language.
\end{abstract}

\section{Introduction and presentation of the main results}

This paper is about involutive and reflexive  transition sytems from a metric point of view.\\
 This point of view, inspired from the work of Quilliot  (1983), and applied first to posets and graphs, was initiated by the second author \cite{pouzet} and developped throught the theses of Jawhari (1983),  Misane (1984) and several papers \cite{JaMiPo} (1986), \cite{pouzet-rosenberg} (1994), \cite {saidane} (1992), \cite {kabil-pouzet} (1998),  \cite {kabil-pouzet-rosenberg} (2018). It  consists to view arbitrary  transition systems as metric spaces. The distance between two states is a language instead of a non-negative real.   To a transition system   $M:= (Q, T)$, with set of states $Q$ and set of transitions $T$ over an alphabet $A$ we associate a map  $d_M$ from $Q\times Q$ into the set $\powerset({A^{\ast}})$ of languages over $A$. The value  $d_M(x,y)$ is the language accepted by the automaton $\mathcal A:= (M, \{x\}, \{y\})$ having $x$ as an initial state and  $y$ as a final state. The set $\powerset({A^{\ast}})$  is an ordered monoid, the monoid operation being the concatenation of languages (with neutral element  $\{\Box\}$, the language reduced to the empty word $\Box$) and the order the reverse of inclusion. The map $d_M$ has  similar properties of an ordinary distance (e.g. it satisfies the triangular inequality). Hence, we may  use concepts and techniques of the theory of metric spaces in the study of transition systems as well as classes of transition systems. Concepts  of  \emph{balls},  \emph{hyperconvex metric space} and 
 \emph{non-expansive}  maps between metric spaces extend to transition systems and more generally to metric spaces over $\powerset({A^{\ast}})$. Due to the fact that joins  exist in the set of values, the category of metric spaces with the non-expansive maps as morphisms has products. Then, one may also define \emph{retractions} and \emph{coretractions}, and by considering \emph{isometries} as approximations of coretractions, \emph{injective} metric spaces and \emph{absolute retracts}. In ordinary metric spaces, the distance is symmetric. To be closer to this situation, it is convenient to suppose that the value of $d_M(x,y)$ determines the value of $d_M(y,x)$; for that, we suppose that the alphabet is equipped with an involution $-$ and our transition systems  $M$ are \emph{involutive}, in the sense that $(x, \alpha, y)\in T$ if and only if $(y, \alpha, x)\in T$. Once the involution is extended to $A^{\ast}$ and then to $\powerset({A^{\ast}})$, we have $d_M(x,y)= \overline {d_M(y,x)}$. Then, one can extend the definion  of metric spaces to transition systems in a natural way, see \cite{pouzet-rosenberg}.  \\

This work is a continuation of the work published in 
\cite{kabil, kabil-pouzet,  kabil-pouzet-rosenberg}.  We require that transition systems $M$ are  \emph{reflexive}, that is  every letter occurs to every vertex:  $(x, \alpha, x)\in T$ for every $x\in Q$ and $\alpha \in A$. In this case,  distances values are final segments of $A^{\ast}$ equipped with the subword ordering, that is subsets $F$ of $A^{*}$ such that $u\in F$ and $u\leq v$ for the subword ordering imply $v\in F$. It turns out that several properties of involutive and reflexive systems and more generally metric spaces over the set $\mathbf F(A^{\ast})$ of final segments of $A^{*}$  rely almost uniquely on the structure of  $\mathbf F(A^{\ast})$. According to the terminology of Kaarli and Radeleczki \cite{kaarli-radeleczki}, this structure is the dual of   an \emph{ integral involutive quantale} ($I^2Q$ for short); here we stick to the name of  \emph{Heyting algebra} that we used in a series of papers.   This is a complete lattice $\mathcal H$ with a monoid
operation 
 (not necessarily commutative) and an  involution $-$ which is isotone and reverses the operation. In order to be closer to the operation of concatenation of languages  we denote by $\cdot$ the monoid operation. We suppose that the neutral element of the monoid, that we denote $1$,  is the least element  of the ordering and we suppose that  the distributivity law below holds
\begin{equation}\label{eq1}
 \bigwedge \{\mathit{p}_{\alpha } \cdot \mathit{q} : \alpha \in I\} =\bigwedge \{ \mathit{p}_{\alpha }: \alpha \in I\}\cdot\mathit{q}.  
 \end{equation}

As shown in  \cite{JaMiPo},  the notions of injective, absolute retract  and hyperconvex spaces over a Heyting algebra coincide, this being essentially due to the fact that the set of  the values of the distance,  being an Heyting algebra,  can be equipped with a distance and that every metric space can be embedded into a power of that metric space. Furthermore, every space has an injective envelope.
 
In particular, every metric space over  
 the Heyting algebra  $\mathbf F(A^{\ast})$ has an injective envelope. The study of such  injective envelope was initiated in  \cite{kabil-pouzet}.  
 It is  based on  the properties of the 
injective envelope of two-element metric spaces. A large account  of its properties was given in  \cite{kabil} and  \cite {kabil-pouzet-rosenberg}.  In this paper, we look at  the many facets of this object which have not been published yet. 

If $F$ is a final segment of $A^{\ast }$,
the injective envelope $\mathcal S_{F}$ of the two-element space $\left\{
x,y\right\} $ such that $d(x,y)=F$ is associated \ to an involutive and reflexive 
transition system. Let $M_{F}$ be this transition system and $\mathcal{A}_{F}$ be the automaton $(M_{F},\left\{ x\right\}, \left\{
y\right\} ) $. 

We characterize first this injective envelope in terms of reflexive and involutive transition systems. 
\begin{theorem}\label{thm:main1}
Let $\mathcal{A}:=(M,\left\{ x\right\}
,\left\{ y\right\} )$ be a  reflexive and involutive  automaton accepting a final segment  $F$ of $A^{\ast}$.  Then $\mathcal{A}$ is isomorphic to  $\mathcal{A}_F$ iff for every reflexive and involutive automaton $\mathcal{A}^{\prime
}:=(M^{\prime },\left\{ x^{\prime }\right\}
,\left\{ y^{\prime }\right\} )$ which accepts $F$, the following properties hold:
\begin{enumerate}[{(i)}]
\item Every automata morphism $f:\mathcal{A}\rightarrow \mathcal{A}^{\prime
}$, if any,  is an isometric embedding; 
\item The map $g:\{x',y'\}\rightarrow M$ such that $g(x')=x$ and $g(y')=y$ extends to a morphism of automata from $\mathcal A'$ to $\mathcal A$.
\end{enumerate}\end{theorem}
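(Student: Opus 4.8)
My plan is to translate between transition systems and metric spaces over $\mathbf{F}(A^{\ast})$ and then to read conditions (i) and (ii) as, respectively, the \emph{essentiality} and the \emph{injectivity} halves of the characterization of the injective envelope. The first ingredient is a dictionary. Since all systems considered are reflexive, each $d_{M}$ is a metric valued in $\mathbf{F}(A^{\ast})$, and a one-letter word $\alpha$ labels a path from $u$ to $v$ iff $(u,\alpha,v)$ is a transition; consequently a map $h$ between the state sets of two reflexive involutive systems is a morphism of transition systems iff it is non-expansive, i.e. $d(h(u),h(v))\supseteq d(u,v)$ for all $u,v$ (the order on $\mathbf{F}(A^{\ast})$ being reverse inclusion), and it is a morphism of the associated automata iff moreover it sends initial state to initial state and final state to final state. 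In particular, when $\mathcal A$ and $\mathcal A'$ both accept $F$, any automaton morphism $\mathcal A\to\mathcal A'$ is automatically an isometry on $\{x,y\}$, because $d(x,y)=F=d(x',y')$. The second ingredient is what it means for $\mathcal S_{F}$ to be the injective envelope of $\{x,y\}$ (see the Introduction and \cite{JaMiPo,kabil-pouzet}): $\mathcal S_{F}$ is injective, and the embedding $\{x,y\}\hookrightarrow\mathcal S_{F}$ is \emph{essential} --- no proper retract of $\mathcal S_{F}$ contains $\{x,y\}$, equivalently every non-expansive map out of $\mathcal S_{F}$ that is an isometry on $\{x,y\}$ is an isometric embedding. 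From these I record the rigidity fact that drives the proof: an isometric self-embedding $s$ of $\mathcal S_{F}$ fixing $x$ and $y$ must be onto. Indeed, by injectivity the inverse of $s$ on $s(\mathcal S_{F})$ extends to an endomorphism $r$ of $\mathcal S_{F}$ with $rs=\mathrm{id}$, so $sr$ is idempotent with image $s(\mathcal S_{F})\supseteq\{x,y\}$, and essentiality forces $s(\mathcal S_{F})=\mathcal S_{F}$.

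Suppose first $\mathcal A\cong\mathcal A_{F}$; we may assume $\mathcal A=\mathcal A_{F}$, and let $\mathcal A'$ accept $F$. Any automaton morphism $f\colon\mathcal A_{F}\to\mathcal A'$ is non-expansive and, as noted, an isometry on $\{x,y\}$, so by essentiality it is an isometric embedding; that is (i). For (ii), the map $g$ sending the marked states $x',y'$ of $\mathcal A'$ to $x,y$ is an isometry between two-element spaces (both pairs at distance $F$), hence non-expansive; injectivity of $\mathcal S_{F}$ extends it to a non-expansive map $M'\to M_{F}$, which by the dictionary is a morphism of transition systems and, preserving the marked states, a morphism of automata $\mathcal A'\to\mathcal A_{F}$; that is (ii).

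Conversely, suppose $\mathcal A$ accepts $F$ and satisfies (i) and (ii) for every $\mathcal A'$ accepting $F$; apply them with $\mathcal A'=\mathcal A_{F}$. The isometry from $\{x,y\}$ onto the pair of marked states of $\mathcal A_{F}$ extends, by injectivity of $\mathcal S_{F}$, to a non-expansive and hence (dictionary) automaton morphism $\phi\colon\mathcal A\to\mathcal A_{F}$; by (i), $\phi$ is an isometric embedding. By (ii), there is an automaton morphism $\psi\colon\mathcal A_{F}\to\mathcal A$ carrying the marked states of $\mathcal A_{F}$ to $x,y$. Then $\phi\circ\psi\colon\mathcal S_{F}\to\mathcal S_{F}$ is non-expansive and fixes the marked states of $\mathcal A_{F}$; by essentiality it is an isometric embedding, and by the rigidity fact it is onto. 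Hence $\phi$ is onto, and being an isometric embedding it is an isometric bijection, so an isomorphism of metric spaces and therefore of automata: $\mathcal A\cong\mathcal A_{F}$.

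The two places that require real care are the ones isolated in the first paragraph. Establishing that, for reflexive involutive systems, ``morphism of transition systems'' coincides with ``non-expansive map'' is what makes it legitimate to use injectivity and essentiality --- notions living in the metric category --- inside the automata setting; and the rigidity fact, that an isometric self-embedding of the envelope fixing $\{x,y\}$ is surjective, is precisely where the minimality built into the injective envelope is exploited. Granting these, the remainder is routine bookkeeping with the initial and final states.
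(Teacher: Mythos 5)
Your proof is correct, but it takes a genuinely different route from the paper's. The paper obtains Theorem \ref{thm:main1} as the two-point specialization of a general characterization (Theorem \ref{thm:main2}) of the injective envelope of an arbitrary subspace $X$ of a metric space over any Heyting algebra; the proof of that general statement is computational, resting on the representation of the envelope by minimal metric forms and on the identity $d(x,y)=\bigvee_{z}d_{\mathcal H}(d(z,x),d(z,y))$, and it is then transferred to transition systems via Fact \ref{fact:morphism} and Theorem \ref{thm:main3}. You instead stay in the two-element case and use only the abstract defining properties of the envelope: your condition (i) in the forward direction is read off directly from essentiality, condition (ii) from injectivity, and the converse is closed by your rigidity fact (an isometric self-embedding of $\mathcal S_F$ fixing $x$ and $y$ is onto), which you derive correctly from injectivity plus essentiality and which is in substance Theorem \ref{thm:injective-proper}, quoted in the paper from \cite{kabil-pouzet}. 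The one step you leave implicit --- that the idempotent isometry $sr$ is the identity --- follows from $d(z,sr(z))=d(sr(z),sr(sr(z)))=1$. Your route is shorter and avoids metric forms entirely; the paper's route buys the more general Theorems \ref{thm:main2} and \ref{thm:main3}, although your argument would in fact also generalize verbatim to an arbitrary subspace $X$. Both proofs rely on the same dictionary between automaton morphisms and non-expansive maps (Fact \ref{fact:morphism}), which you rightly single out as the point needing care.
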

We will obtain this result as a consequence of a characterization of the injective envelope among generalized metric spaces (Theorem \ref{thm:main2}) given in Section \ref{section metric}. 

Then,  we develop an approach in terms of Galois correspondence. 

To a final segment $F$ of $A^{*}$ we associate the incidence structure $R:=(A^{*},\rho_F,A^{*})$ where
 $\rho $ is the binary relation on $A^{*}$ defined by
$u  \rho_F  v$ if the concatenation $u v$ of $u$ and $v$ belongs to $F$. For each $v\in A^{*}$, let
$R^{-1}(v):=\left\{ u\in A^{*}\text{ : }u\rho_F
v\right\} $ and for $V \subseteq A^{*}$, let $R_{\wedge} ^{-1}(V ):= \underset {v\in V}{
\bigcap }\rho_F^{-1}(v)$. The collection of all $R_{\wedge}^{-1}(V )$ for $
V \subseteq A^{*}$, once ordered by inclusion, forms a complete lattice $\gal(R)$,
called the \textit{Galois lattice} associated to the incidence structure $R$.
As a subset of $\mathcal H:=\mathbf F(A^{\ast })$, it inherits the metric structure $d_{\mathcal H}$ of $
\mathcal H$. Containing $x:=A^{\ast }$ and $y:=F$, two elements which
verify $d_{\mathcal H}(x, y)=F$, this metric space is the injective envelope of $
\left\{ x,y\right\} $. 

For  concrete examples, suppose  
that $F$ is a finite union of final segments $
F_{0},\dots  F_{i}, \dots F_{k-1}$  and that each $F_{i}$ is generated by $X_{i}$, a set of
words $u_{i}$ of the same length $n_{i}$, all of the from $u_{i}:= 
a_{0}^{i}$ $a_{1}^{i}\cdots a_{n_{i}-1}^{i}$ with $a_{j}^{i}\in
X_{i_{j}}\subseteq A$. Let $\mathbf{n}_{0}\otimes \cdots \otimes \mathbf{n}%
_{k-1}$ be the direct product of $k$ chains $\mathbf{n}_{0}$, ... $\mathbf{n}%
_{k-1}$ where $\mathbf{n}_{i}:=\left\{ 0,...,n_{i}-1\right\} $ is equipped
with the natural ordering, and let $\mathbf F(\mathbf{n}_{0}\otimes \cdots \otimes
\mathbf{n}_{k-1})$ be the collection of final segments of $\mathbf{n}%
_{0}\otimes\cdots \otimes \mathbf{n}_{k-1}$ ordered by inclusion.

We prove (see Section 5):

\begin{theorem}\label{injenv}
 As a lattice, the injective envelope $\mathcal S_{F}$ can be
identified with an intersection closed subset  of the set
$\mathbf F(\mathbf{n}_{0}\otimes \cdots \otimes \mathbf{n}_{k-1})$. Moreover
if $\downarrow $ $u_{i}\cap $ $\downarrow u_{j}$
= $\left\{ \Box \right\} $ whenever $u_{i}\in $ $X_{i}$, $u_{j}\in $ $
X_{j}$, and $i\neq j$, then $\mathcal S_{F}$ identifies to the full set $\mathbf F(\mathbf{n}
_{0}\otimes \cdots \otimes \mathbf{n}_{k-1})$.
\end{theorem}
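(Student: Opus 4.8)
The plan is to argue directly with the description of $\mathcal S_{F}$ as the Galois lattice $\gal(R)$ of $R=(A^{*},\rho_{F},A^{*})$ recalled above, and to produce an explicit order‑isomorphism $\Theta$ from $\gal(R)$ onto an intersection closed subset of $\mathbf F(P)$, where $P:=\mathbf{n}_{0}\otimes\cdots\otimes\mathbf{n}_{k-1}$; the extra hypothesis will make $\Theta$ onto. We may assume every $X_{i_{j}}$ is non‑empty and every $n_{i}\ge 1$, since otherwise some $F_{i}$ is empty (and may be omitted) or $F=A^{*}$ (a trivial case). For $j<k$ and $0\le\ell\le n_{j}$ write $X_{j}:=X_{j_{0}}X_{j_{1}}\cdots X_{j_{n_{j}-1}}$ and let $F_{j}^{(\ell)}$ be the final segment of the words of $A^{*}$ admitting a subword in $X_{j_{0}}\cdots X_{j_{\ell-1}}$; thus $F_{j}^{(0)}=A^{*}\supseteq F_{j}^{(1)}\supseteq\cdots\supseteq F_{j}^{(n_{j})}=F_{j}$. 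For $u\in A^{*}$ put $\lambda_{j}(u):=\max\{\ell:u\in F_{j}^{(\ell)}\}$, and dually let $\mu_{j}(v)$ be the largest $m$ for which $v$ has a subword in $X_{j_{n_{j}-m}}\cdots X_{j_{n_{j}-1}}$.

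First I would prove the elementary \emph{cut lemma}: $uv\in F$ iff $\lambda_{j}(u)+\mu_{j}(v)\ge n_{j}$ for some $j<k$ — split a witnessing occurrence of a word of $X_{j}$ in $uv$ into its part in $u$, matching a prefix of the pattern, and its part in $v$, matching the complementary suffix, and conversely concatenate. It follows that $\rho_{F}^{-1}(v)=\bigcup_{j<k}F_{j}^{(\,n_{j}-\mu_{j}(v)\,)}$, so that the complement of $\rho_{F}^{-1}(v)$ is $\{u:\lambda_{j}(u)\le n_{j}-\mu_{j}(v)-1\ \text{for all}\ j\}$. Now set $A^{*}_{0}:=A^{*}\setminus F$. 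Since $R_{\wedge}^{-1}(A^{*})=\bigcap_{v}\rho_{F}^{-1}(v)=F$ is the least element of $\gal(R)$, every closed set contains $F$ and hence is determined by its trace on $A^{*}_{0}$, and on $A^{*}_{0}$ one has $\lambda_{j}(u)\le n_{j}-1$ for all $j$ (because $\lambda_{j}(u)=n_{j}$ forces $u\in F_{j}\subseteq F$). Therefore $\theta_{0}\colon A^{*}_{0}\to P$, $\theta_{0}(u):=(\lambda_{0}(u),\dots,\lambda_{k-1}(u))$, is well defined, and for $v\notin F$ the trace on $A^{*}_{0}$ of the complement of $\rho_{F}^{-1}(v)$ is $\theta_{0}^{-1}(\mathord{\downarrow}\vec{b}(v))$, where $\vec{b}(v):=(n_{j}-\mu_{j}(v)-1)_{j<k}\in P$. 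Intersecting over $v\in V$ (the $v\in V\cap F$ yield $\rho_{F}^{-1}(v)=A^{*}$ and contribute nothing) gives, for every $V\subseteq A^{*}$,
\[
R_{\wedge}^{-1}(V)\cap A^{*}_{0}=\theta_{0}^{-1}(J_{V}),\qquad
J_{V}:=P\setminus\bigcup_{v\in V\setminus F}\mathord{\downarrow}\vec{b}(v)\in\mathbf F(P).
\]

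Next I would package this into the embedding. Put $\Theta(\mathcal C):=\bigcup\{J\in\mathbf F(P):\theta_{0}^{-1}(J)\subseteq\mathcal C\}$ for $\mathcal C\in\gal(R)$. The displayed identity gives $\theta_{0}^{-1}(\Theta(\mathcal C))=\mathcal C\cap A^{*}_{0}$; together with $F\subseteq\mathcal C$ this makes $\Theta$ injective with order‑preserving inverse on its image, and $\Theta$ is clearly order preserving. From $\theta_{0}^{-1}\bigl(\Theta(\mathcal C_{1})\cap\Theta(\mathcal C_{2})\bigr)=(\mathcal C_{1}\cap\mathcal C_{2})\cap A^{*}_{0}=\theta_{0}^{-1}\bigl(\Theta(\mathcal C_{1}\cap\mathcal C_{2})\bigr)$ and monotonicity one deduces $\Theta(\mathcal C_{1}\cap\mathcal C_{2})=\Theta(\mathcal C_{1})\cap\Theta(\mathcal C_{2})$, and the same for arbitrary intersections. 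Hence $\Theta$ is an order‑isomorphism of $\gal(R)$ onto the subset $\Theta(\gal(R))$ of $\mathbf F(P)$, which is closed under intersection; since an order‑isomorphism of complete lattices is a lattice isomorphism and $\mathcal S_{F}=\gal(R)$, this proves the first statement (and shows, in passing, that $\mathcal S_{F}$ is finite, $\mathbf F(P)$ being finite).

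Finally, for the second statement I would note that $\mathord{\downarrow}u_{i}\cap\mathord{\downarrow}u_{j}=\{\Box\}$ for all $u_{i}\in X_{i}$, $u_{j}\in X_{j}$ with $i\ne j$ amounts to $X_{i_{a}}\cap X_{j_{b}}=\varnothing$ whenever $i\ne j$ (a single common letter is a non‑empty common subword, and conversely), and then prove that $\theta_{0}$ is onto: given $(p_{0},\dots,p_{k-1})\in P$, pick a word $w_{j}$ using exactly one letter from each of $X_{j_{0}},\dots,X_{j_{p_{j}-1}}$ and set $u:=w_{0}w_{1}\cdots w_{k-1}$; then $\lambda_{j}(u)\ge p_{j}$ since $w_{j}$ is a subword of $u$, while $\lambda_{j}(u)\le p_{j}$ because, by disjointness of the $X_{i_{a}}$ across $i$, any occurrence in $u$ of a word of $X_{j_{0}}\cdots X_{j_{p_{j}}}$ would use $p_{j}+1$ positions all inside the $p_{j}$‑letter block $w_{j}$, which is impossible. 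Dually each $\vec{b}\in P$ equals $\vec{b}(v)$ for some $v\notin F$ (use the suffix blocks the same way). Consequently every initial segment of $P$ is of the form $\bigcup_{v\in V\setminus F}\mathord{\downarrow}\vec{b}(v)$, so every $J\in\mathbf F(P)$ equals $J_{V}$ for a suitable $V$; then $\theta_{0}^{-1}(J)\cup F=R_{\wedge}^{-1}(V)\in\gal(R)$ and $\Theta(R_{\wedge}^{-1}(V))=J$ by surjectivity of $\theta_{0}$, so $\Theta$ is onto and $\mathcal S_{F}\cong\mathbf F(P)$. The only delicate point is exactly this surjectivity of $\theta_{0}$ and of $v\mapsto\vec{b}(v)$: it is where the hypothesis $\mathord{\downarrow}u_{i}\cap\mathord{\downarrow}u_{j}=\{\Box\}$ enters, to stop one block of letters from accidentally prolonging an occurrence of a pattern attached to a different index; without it $\Theta(\gal(R))$ is in general a proper intersection closed subset of $\mathbf F(P)$, and everything else — the cut lemma and the bookkeeping with traces on $A^{*}_{0}$ — is routine.
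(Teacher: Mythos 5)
Your argument is correct in substance and, for the first assertion, is essentially the paper's own proof written out by hand: your $\lambda_{j}$ and $n_{j}-\mu_{j}-1$ are (up to an indexing shift) the maps $f_{i},g_{i}$ of the paper's Fact 14, your cut lemma is precisely the statement that $(f_{i},g_{i})$ is a coding of $(A^{\ast}\setminus F,\rho_{F_{i}},A^{\ast}\setminus F)$ into $(\mathbf{n}_{i},\lnot\leq,\mathbf{n}_{i})$, your restriction to $A^{\ast}_{0}=A^{\ast}\setminus F$ plays the role of Fact 8, and your map $\Theta$ reproduces concretely what the paper obtains by combining the codings (Fact 10 and Corollary 10) and then invoking Fact 9(c) and Fact 7. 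Where you genuinely depart from the paper --- and add something it lacks --- is the \emph{Moreover} clause: the paper's subsection entitled ``Proof of the second part'' in fact only completes the proof of the \emph{first} assertion for $k$ blocks, and the claim that $\mathcal S_{F}$ is all of $\mathbf F(\mathbf{n}_{0}\otimes\cdots\otimes\mathbf{n}_{k-1})$ under the disjointness hypothesis is nowhere argued (the ``explicit isomorphism'' subsection does not address surjectivity either). Your argument --- realize an arbitrary $(p_{0},\dots,p_{k-1})$ by concatenating blocks $w_{j}$ of length $p_{j}$, and use the hypothesis to confine any occurrence of the $j$-th pattern to the $j$-th block, then do the dual for $\vec b$ and conclude that every final segment of the product is some $J_{V}$ --- is exactly the missing step, and it works.

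One imprecision worth fixing: when the alphabet $A$ is nontrivially ordered, the hypothesis $\downarrow u_{i}\cap\downarrow u_{j}=\{\Box\}$ does \emph{not} amount to $X_{i_{a}}\cap X_{j_{b}}=\emptyset$; it says that no nonempty word, in particular no letter, lies below both $u_{i}$ and $u_{j}$ in the Higman order, i.e.\ $\downarrow X_{i_{a}}\cap\downarrow X_{j_{b}}=\emptyset$ inside $A$. Mere disjointness of the letter sets would not support your block-confinement step, since a letter of $w_{i}$ could still dominate (in the order on $A$) a letter of some $X_{j_{b}}$ and thereby serve as a position of the $j$-th pattern outside $w_{j}$. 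The downset-disjointness is what your argument actually uses, and it is exactly what the theorem assumes; with that reading everything goes through.
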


We recall that a poset $P$ is {\it well-quasi-ordered}, in brief w.q.o., if it well-founded (every non empty subset contains a minimal element) and  contains no infinite antichain. A fundamental result of G.Higman \cite{higman} asserts that the free ordered monoid $A^{\ast}$ is w.q.o. whenever the alphabet $A$ is w.q.o.. The  set of final segments of a w.q.o. set, once ordered by reverse of the inclusion, is  well-founded \cite {higman}.  Hence, if our alphabet  $A$ is w.q.o., every final segment $F$ of $A^{*}$  is generated by finitely many
words $u_{0}$, ..., $u_{k-1}$, hence has the form mentioned above.
Consequently, the corresponding injective envelope is finite. Concerning its
size, let us mention that if $k=2$, then $\mathbf F(\mathbf{n}_{0}\otimes \mathbf{n}%
_{1})$ has size $\left(
\begin{array}{c}
n_{0}\cdot n_{1} \\
n_{0}
\end{array}
\right) \left[ 1\right] $ $\left( \text{3.16, p. 80}\right) .$ If $n_{0}$ =
... = $n_{k-1}$ = 2, then $(\mathbf F(\mathbf{n}_{0}\otimes \cdots\otimes \mathbf{n}%
_{k-1}),\supseteq )$ is isomophic to $FD(k)$, the free
distributive lattice with $k$ generators. It is a famous problem,
raised by Dedekind, to give an explicit and workable formula for
$FD(k)$.  The largest exact value known is  $FD(8)$ \cite {wiedemann}. An asymptotic formula was given by Korshunov  in 1981\cite{korshunov}.

   For an example, on the two-letter alphabet $A=\left\{ a,b\right\} $,
the words $u_{0}$ $:=$ $aa$ and $u_{1}:=$ $bb$ give the lattice (ordered by reverse of inclusion) and graph
represented on Figure 1.

\unitlength=1cm  
\begin{picture}(16,7)
\put(2,6.5){\circle*{.15}}
\put(2,5.5){\circle*{.15}}
\put(1,4.5){\circle*{.15}}
\put(3,4.5){\circle*{.15}}
\put(2,3.5){\circle*{.15}}
\put(2,2.5){\circle*{.15}}
\put(2,6.5){\line(0,-1){1}}
\put(2,5.5){\line(1,-1){1}}
\put(2,5.5){\line(-1,-1){1}}
\put(1,4.5){\line(1,-1){1}}
\put(3,4.5){\line(-1,-1){1}}
\put(2,2.5){\line(0,1){1}}
\put(2.2,2.5){$A^{*}$}
\put(3.2,4.5){$\uparrow \{ b,aa\}$}
\put(2.2,5.5){$\uparrow \{ab,ba,aa,bb\}$}
\put(2.2,6.5){$\uparrow \{ bb,aa\}$}
\put(-.7,4.5){$\uparrow \{a,bb\}$}
\put(2.4,3.5){$\uparrow \{a,b\}$}
\put(0,1.5){The lattice structure of ${\mathcal S}_F$}
\put(9,6.5){\circle*{.15}}
\put(9,5.5){\circle*{.15}}
\put(8,4.5){\circle*{.15}}
\put(10,4.5){\circle*{.15}}
\put(9,3.5){\circle*{.15}}
\put(9,2.5){\circle*{.15}}
\put(9,6.5){\line(0,-1){1}}
\put(9,5.5){\line(1,-1){1}}
\put(9,5.5){\line(0,-1){2}}
\put(9,5.5){\line(-1,-1){1}}
\put(8,4.5){\line(1,-1){1}}
\put(8,4.5){\line(1,0){2}}
\put(10,4.5){\line(-1,-1){1}}
\put(9,2.5){\line(0,1){1}}
\bezier{200}(9,6.5)(10.5,5.5)(10,4.5)
\put(10,5.5){\vector(1,-2){.1}}
\bezier{200}(9,6.5)(7.5,5.5)(8,4.5)
\put(8,5.5){\vector(2,3){.1}}
\bezier{200}(10,4.5)(10.5,3.5)(9,2.5)
\put(10,3.5){\vector(-1,-2){.1}}
\bezier{200}(9,2.5)(7.5,3.5)(8,4.5)
\put(8,3.5){\vector(-1,3){.1}}
\put(7,1.5){The graphic structure of ${\mathcal S}_F$}
\put(8.5,2){\vector(1,1){.3}}
\put(9.1,6.5){\vector(-1,1){.3}}
\put(4.5,.5){Figure 1}
\end{picture}    

%

%
%

 Structural properties of transition systems
rely upon algebraic properties of languages and conversely. In
fact, transition systems can be viewed as geometric objects
interpretating these algebraic properties. An illustration of this claim is given by the following result (see Corollary 4.9 \cite{kabil-pouzet}). 

\begin{theorem} Let $F$ be a nonempty final segment of $A^{\ast }$. If
$F$ is the  concatenation  of final segments  $F_{1}, \dots , F_{n}$  then the automaton $\mathcal A_F:= (M_{F}, \{x\}, \{y\})$ associated to the injective envelope 
 $\mathcal S_{F}$  is the concatenation  
$\mathcal A_{F_{1}}\cdot \cdot\mathcal A_{F_{n}}$ of automata  $\mathcal A_{F_i}:= (M_{F_i}, \{x_i\}, \{y_i\})$ associated to the injective envelope $S_{F_i}$, this concatenation being obtained by identifying  each $y_i$ with $x_{i+1}$. \end{theorem}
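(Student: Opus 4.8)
The plan is to reduce to the case $n=2$ and then to recognise the concatenation $\mathcal B:=\mathcal A_{F_{1}}\cdot\mathcal A_{F_{2}}$ as $\mathcal A_{F}$, $F:=F_{1}\cdot F_{2}$, by checking the two conditions of Theorem~\ref{thm:main1}. The reduction is immediate: concatenation of final segments and of automata are both associative and $F_{1}\cdots F_{n}=(F_{1}\cdots F_{n-1})\cdot F_{n}$, so by induction $\mathcal A_{F_{1}}\cdot\cdots\cdot\mathcal A_{F_{n}}\cong\mathcal A_{F_{1}\cdots F_{n-1}}\cdot\mathcal A_{F_{n}}$ and only two factors remain. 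Here $\mathcal B$ has for transition system the disjoint union $M_{F_{1}}\sqcup M_{F_{2}}$ with $y_{1}$ and $x_{2}$ identified into one state $p$, with $x_{1}$ and $y_{2}$ as initial and final states; it is plainly reflexive and involutive. Recall also that a non-expansive map between reflexive involutive transition systems is automatically a morphism (the triples $(a,\alpha,b)$ with $a\neq b$ being exactly those for which the one-letter word $\alpha$ lies in the accepted language $d(a,b)$); this will be used for condition (ii).

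Two preliminaries. \emph{(a) Distances in $\mathcal B$.} Since each block is reflexive, the loop language $d(p,p)$ computed inside a block equals $A^{\ast}$; hence any walk of $\mathcal B$ that leaves a block through $p$ and returns through $p$ merely appends letters to the label of a walk staying inside the block, so such excursions change nothing up to passing to a final segment. Thus $d_{\mathcal B}(a,b)=d_{\mathcal S_{F_{i}}}(a,b)$ when $a,b$ lie in the same block $M_{F_{i}}$, and $d_{\mathcal B}(a,b)=d_{\mathcal S_{F_{1}}}(a,p)\cdot d_{\mathcal S_{F_{2}}}(p,b)$ when $a\in M_{F_{1}}$ and $b\in M_{F_{2}}$; in particular $d_{\mathcal B}(x_{1},y_{2})=F_{1}\cdot F_{2}=F$, so $\mathcal B$ accepts $F$. \emph{(b) Two lemmas.} I will use a cancellation lemma — if $G,H,Q$ are final segments of $A^{\ast}$ with $Q\neq\varnothing$ and $G\supseteq H$, then $G\cdot Q=H\cdot Q$ forces $G=H$, and symmetrically on the left (proof: for $w\in G$ pick $v\in Q$ of least length; a subword embedding of some $u\cdot v'$, $u\in H$, $v'\in Q$, into $wv$ cannot push a letter of $u$ into the $v$-part, for that would make $v'$ a subword of $v$ of strictly smaller length, contradicting minimality; hence $w\geq u\in H$) — together with the betweenness property of the injective envelope of a two-element space: every point $c$ of $\mathcal S_{F_{i}}$ satisfies $d(x_{i},c)\cdot d(c,y_{i})=F_{i}$.

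\emph{Condition (i).} Let $\mathcal A'=(M',\{x'\},\{y'\})$ be reflexive involutive with $d_{M'}(x',y')=F$ and let $f\colon\mathcal B\to\mathcal A'$ be a morphism. Graph morphisms only enlarge distances, so $d_{M'}(f(a),f(b))\supseteq d_{\mathcal B}(a,b)$ throughout. The triangle inequality along $x'\to f(p)\to y'$ gives $F\supseteq d_{M'}(x',f(p))\cdot d_{M'}(f(p),y')\supseteq F_{1}\cdot F_{2}=F$, so the middle product equals $F$, and the cancellation lemma yields $d_{M'}(x',f(p))=F_{1}$ and $d_{M'}(f(p),y')=F_{2}$. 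Hence $f$ restricts to a morphism of $\mathcal A_{F_{1}}$ into the reflexive involutive automaton $(M',\{x'\},\{f(p)\})$, which accepts $F_{1}$; by Theorem~\ref{thm:main1}(i) applied to $\mathcal A_{F_{1}}$ this restriction is an isometric embedding, and likewise on the second block. Finally, for $a\in M_{F_{1}}$ and $b\in M_{F_{2}}$, the triangle inequality along $x'\to f(a)\to f(b)\to y'$, combined with (a) and betweenness, forces $d_{M'}(x',f(a))\cdot d_{M'}(f(a),f(b))\cdot d_{M'}(f(b),y')$ to equal $F$; cancelling on the left and on the right gives $d_{M'}(f(a),f(b))=d_{\mathcal B}(a,b)$. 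With the involution taking care of the symmetric pairs, $f$ preserves all distances and is therefore an isometric embedding.

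\emph{Condition (ii) and conclusion.} Given $\mathcal A'$ as above, $x'\mapsto x_{1}$, $y'\mapsto y_{2}$ is an isometric embedding of the two-element space $\{x',y'\}$ into $\mathcal B$, since $d_{M'}(x',y')=F=d_{\mathcal B}(x_{1},y_{2})$. To extend it to a morphism $\mathcal A'\to\mathcal B$ it suffices that the generalized metric space underlying $\mathcal B$ be \emph{hyperconvex}: the isometry $\{x',y'\}\hookrightarrow M'$ would then admit a non-expansive extension $M'\to M_{\mathcal B}$, which, as recalled above, is a morphism. So condition (ii) reduces to the hyperconvexity of $\mathcal B$, which I would obtain from a \emph{gluing lemma}: the amalgam of two hyperconvex metric spaces over $\mathbf F(A^{\ast})$ along a one-element subspace, the metric being made additive through that point, is again hyperconvex — the exact analogue of the classical gluing of hyperconvex spaces along a point. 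Verifying this lemma, namely producing for an arbitrary pairwise-intersecting family of balls whose centres are distributed over the two blocks a point common to all of them, using the additivity at $p$ together with the distributivity law~\eqref{eq1}, is the step I expect to be the main obstacle. Once (i) and (ii) hold, Theorem~\ref{thm:main1} gives $\mathcal B\cong\mathcal A_{F}$, that is $\mathcal A_{F}\cong\mathcal A_{F_{1}}\cdot\mathcal A_{F_{2}}$ with $y_{1}$ identified to $x_{2}$, and the induction on $n$ completes the general case.
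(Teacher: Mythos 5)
First, a remark on the comparison you were asked for: the paper does not actually prove this statement in the text — it is quoted from Corollary 4.9 of \cite{kabil-pouzet} and restated as Theorem \ref{thm:sum} — so there is no in-paper argument to measure yours against. Judged on its own, your framework (reduce to $n=2$, compute $d_{\mathcal B}$ on the glued automaton, verify the two conditions of Theorem \ref{thm:main1}) is sensible, and several pieces are correct: the reduction by associativity, the description of the distance on $\mathcal B$, the cancellation lemma (which is exactly the cancellativity of $\mathbf F(A^{\ast})\setminus\{\emptyset\}$ alluded to after the theorem), the deduction $d_{M'}(x',f(p))=F_1$ and $d_{M'}(f(p),y')=F_2$, and the within-block isometry.

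However, two steps are genuinely broken or missing. \emph{(1)} The ``betweenness property'' $d(x_i,c)\cdot d(c,y_i)=F_i$ for every $c\in\mathcal S_{F_i}$ is false; only the inclusion $d(x_i,c)\cdot d(c,y_i)\subseteq F_i$ (the triangle inequality) holds. Concretely, take $F=\uparrow\{aa,bb\}$ as in Figure 1 and $c=Fa^{-1}=\uparrow\{a,bb\}\in\mathcal S_F$: then $d(x,c)=d(c,y)=\uparrow\{a,bb\}$, so $d(x,c)\cdot d(c,y)=\uparrow\{aa,abb,bba,bbbb\}$, which does not contain $bb$ and is a proper subset of $F$. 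Your treatment of condition (i) for cross-block pairs $(a,b)$ hinges on forcing $d_{M'}(x',f(a))\cdot d_{M'}(f(a),f(b))\cdot d_{M'}(f(b),y')$ to equal $F$ via betweenness and then cancelling; without betweenness the triple product may be strictly smaller than $F$, and I do not see how the sandwich plus cancellation yields the needed inclusion $d_{M'}(f(a),f(b))\subseteq d_1(a,p)\cdot d_2(p,b)$. \emph{(2)} Condition (ii) is reduced to the hyperconvexity of the one-point amalgam of two hyperconvex spaces, which you explicitly leave unproved. This is not a routine verification: the paper itself notes that over the four-element Heyting algebra encoding posets the sum of two injectives over a common point need not be injective, so such a gluing lemma is precisely where the specific structure of $\mathbf F(A^{\ast})$ must enter — it is essentially equivalent to the decomposition property, i.e., to the statement being proved. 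As it stands, neither (i) nor (ii) is fully established, so the proof is incomplete.
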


The fact that an 
automaton decomposes into such a concatenation  can be viewed directly by
looking at states which disconnect the underlying graph. From this follows  the  uniqueness of such a decomposition. This uniqueness amounts to the fact that the monoid  $\mathbf F(A^{*})\setminus \{\emptyset\}$ is free. A purely algebraic proof of this result is given in \cite{kabil-pouzet-rosenberg}. 

We discuss then the relationship between the minimal deterministic automaton accepting  a final segment $F$, say $Min_F$,   and the automaton $\mathcal A_{F}$ associated to the injective envelope $\mathcal S_{F}$. This minimal automaton is part of $\mathcal A_{F}$, but not in an isometric way ($Min_F$ being deterministic cannot be reflexive,  in general it is not involutive). Among involutive and reflexive transition systems accepting a given final segment $F$, we consider those with a minimum number of states and among those, the ones with a maximal number of transitions, that we call Minmax automata. Exemples given by Mike Main and communicated by Maurice Nivat \cite{nivat} show that contrarily to the case of deterministic automata, these automata are not unique.

We introduce Ferrers languages.  A language $L$ over $A^*$ is \emph{Ferrers} if 
\begin{equation}
  xx^{\prime }\in L\;  \mbox{and}\; yy^{\prime }\in L\;  \text{imply}\;  xy^{\prime }\in L\; 
\text{or}\;  yx^{\prime }\in L\;  \text{for all}\; x,x^{\prime },y,y^{\prime }\in
A^{\ast }.   
\end{equation} 

The class of Ferrers languages is closed under complement but not under concatenation. Still, if $F_1, \dots F_{n}$ are Ferrers and each $F_i$ is   a final segment of $A^{*}$ then the concatenation $F_1\cdots F_n$ is Ferrers (Corollary \ref{productFerrers}).

We prove that a final  segment $F$ of $A^{*}$  is Ferrers  if and only if the injective envelope $\mathcal S_{F}$ is totally orderable, that is there is a linear order $\preceq$ on  $\mathcal S_{F}$ such that $d(x,z)\preceq d(x,y)$ and $d(z,y)\preceq d(x,y)$ for all $x\preceq z\preceq y$. (Theorem \ref{prop:ferrerslanguage}). 

Over a finite alphabet $A^*$, Boolean combinations of final segments of $A^{*}$ are called  \emph{piecewise testable languages}. They have been characterized by Simon \cite{simon} by the fact that their syntactical monoid is $\mathcal J$-trivial. The Boolean algebra of piecewise testable languages is included into the Boolean algebra generated by rational Ferrers languages. Indeed,  over a finite alphabet, every final segment is a finite union of rational Ferrers languages. But, on an alphabet with at least two letters,  there are  rational Ferrers languages which are not piecewise testable (e.g.,   $L:= A^*b$ on $A:= \{a,b\}$). We do not know if  they are dot-depth one.



This paper is organized as follows. Properties of
metric spaces over a Heyting algebra and their injective envelopes
are summarized in section 2.
 In section 3, we introduce  the Heyting algebra $\mathbf F(A^{\ast })$. In section 4 we consider transition systems as metric spaces.  In section  5 we describe the injective envelope of a 
two-element metric spaces over $\mathbf F(A^{\ast })$; we prove Theorem \ref{thm:main1} and  \ref{injenv} and conclude the section by a counterexample about Minmax automata due to M.Main. Ferrers languages are introduced in section 6.

The  results  developped here have been presented at the International Conference on Discrete Mathematics and Computer Science (DIMACOS'11) organized by A.~Boussa\"{\i}ri, M.~Kabil, and A.~Taik  in Mohammedia (Morocco) May, 5-8, 2011. They were never published; a part of it was included  into the Th\`ese d'\'Etat defended by the first author \cite{kabil}.

\section{Metric spaces over a Heyting algebra}\label{section metric}
\subsection{Basic facts}
  The following is extracted from \cite{kabil-pouzet-rosenberg} (for more details, see  \cite {kabil-pouzet}). Let $\mathcal H $ be
a Heyting algebra  and let $E$ be a set. A \textit{$\mathcal H $-distance} on $E$ is a map $%
d:E^{2}\longrightarrow \mathcal H $ satisfying the following properties for all $%
x,y,z\in E$:
\begin{enumerate}[(1)]
\item  $\ d(x,$ $y)=1\Longleftrightarrow x=y$, 

\item $\ d(x,y)\leq d\left( x,z\right) \cdot d\left( z,y\right) $, 

\item $d(x,$ $y)=\overline{d\left( y,x\right) }.$
\end{enumerate}

The pair $\left( E,d\right) $ is called a $\mathcal H $-\emph{metric space}. If there is no
danger of confusion we will denote it $E.$ A $\mathcal H $-distance can be defined on $%
\mathcal H $. This fact relies on the classical notion of \textit{residuation}. Let $v\in \mathcal H $. Given $\beta \in \mathcal H $, each of the sets
 $\{r \in \mathcal H : v \leq r \cdot \beta\}$ and
$\{r \in \mathcal H : v \leq  \beta \cdot r \}$ has a least element, that we 
denote
respectively $\lceil v\cdot \beta^{-1} \rceil$ and $\lceil\beta^{-1} \cdot v  \rceil$
(note that  $\overline {\lceil\beta^{-1} \cdot v \rceil} =
\lceil \bar v\cdot  (\bar\beta)^{-1} \rceil$). It follows that for all
$p, q \in \mathcal H $, the set
$$D(p,q):=\{r \in \mathcal H  : p\leq q \cdot \bar r\;\;{\rm and}\; \; q\leq 
p\cdot r\}$$
has a least
element, namely $\lceil \bar p\cdot (\bar q)^{-1} \rceil \vee
\lceil p^{-1} \cdot q \rceil$, that we denote $d_\mathcal H (p,q)$. As shown in \cite{JaMiPo}, the
map $(p,q) \longrightarrow d_{\mathcal H }(p,q)$
is a $ \mathcal H -$distance.

Let $(E,d)$ and $\left( E^{\prime },d^{\prime }\right) $ be two
$\mathcal H -$metric spaces. Recall that a map $f:E\longrightarrow E^{\prime }$ is a
\textit{contraction}  (or a {non-expansive map}) from $(E$, $d)$ to $\left( E^{\prime },d^{\prime
}\right) $ provided that $d^{\prime }(f(x),(f(y))\leq d(x,y)$ holds for all $%
x,y,\in E$.  The map $f$ is an \textit{isometry} if $d^{\prime
}(f(x),(f(y))=d(x,y)$ for all $x,y,\in E$. We say that $E$ and
$E^{\prime }$ are \textit{isomorphic}, a fact that we denote
$E\cong E^{\prime }$, if there is a surjective isometry from $E$
onto $E'$.

Let $\left( (E_{i}, d_{i})\right) _{i\in I}$ be a family of $\mathcal H $-metric spaces. The \emph{direct product}  $\underset{i\in I}{%
\prod }\left( E_{i}, d_{i}\right) $, is the metric space $(E,d) $ where $E$ is the cartesian product  $%
\underset{i\in I}{\prod }E_{i}$ and $d$ is the  ''sup'' (or
$\ell ^{\infty }$) distance  defined
by $d\left(
\left( x_{i}\right) _{i\in I},\left( y_{i}\right) _{i\in I}\right) =%
\underset{i\in I}{\bigvee }d_{i}(x_{i},$ $y_{i})$.

For a $\mathcal H $-metric space $E$, $x$ $\in E$ and $r\in \mathcal H $, we define the {\it ball} $B_{E}(x,r)$ as the set \{$%
y\in E:d\left( x,y\right) \leq r\}$. 
We say that  $E$ is \emph{convex} if the intersection of two balls $B_{E}(x_{1}$, $r_{1})$ and $B_{E}(x_{2}$, $r_{2})$ is non-empty provided that 
$d(x_{1}$, $x_{2})\leq $ $r_{1}\cdot\overline{r_{2}}$. We say that 
$E$ is  \textit{hyperconvex} if the intersection
of every family of balls ($B_{E}(x_{i}$, $r_{i})$)$_{i\in I}$ is non-empty
whenever $d(x_{i}$, $x_{j})\leq $ $r_{i}\cdot\overline{r_{j}}$ for all $i,j\in I$.
For an example, 
\emph{$(\mathcal H ,d_{\mathcal H })$ is a hyperconvex  $\mathcal H $-metric space
and every $\mathcal H $-metric space embeds isometrically into a
power of $(\mathcal H ,d_{\mathcal H })$} \cite{JaMiPo}.  This is due to the fact that for
every $\mathcal H $-metric space $(E,d)$ and for all $x,y\in E$ the
following equality holds:

$$d\left( x,y\right)  = \underset{%
z\in E}{\bigvee } d_{\mathcal H } (d(z,x),d(z,y)).$$

The space $E$ is a \textit{retract} of $E^{\prime }$, in symbols $
E\vartriangleleft E^{\prime }$, if there are two contractions $f:
E\longrightarrow E^{\prime }$ and $g:E^{\prime
}\longrightarrow E$ such that $g\circ f=id_{E}$ (where $id_{E}$ is
the identity map on $E$). In this case,
$f$ is a  \textit{coretraction} and $g$ a \textit{%
retraction}. If $E$ is a subspace of $E^{\prime }$, then clearly $E$ is a retract of
$E^{\prime }$ if there is a contraction from $E^{\prime }$ to $E$ such $%
g(x)=x $ for all $x$ $\in E.$ We can easily see that every coretraction is
an isometry.  A metric space is an \textit{absolute retract} if it is a
retract of every isometric extension. The space $E$ is said to be \textit{%
injective} if for all $\mathcal H $-metric space $E^{\prime }$ and $E'', $ each
contraction $f:E^{\prime }\longrightarrow E$ and every isometry $g:E^{\prime
}\longrightarrow E''$ there is a contraction $h:E''\longrightarrow E$ such that $%
h\circ g=f$.  We recall that \emph{for a metric space over a Heyting algebra $\mathcal H $,
the notions of absolute retract, injective, hyperconvex and retract of a
power of $(\mathcal H ,d_{\mathcal H })$ coincide} \cite{JaMiPo}.

\subsection{Injective envelope}A contraction $f: E\longrightarrow E'$ is \textit{essential} it for
every contraction $g: E'\longrightarrow E''$,  the map $g\circ f$ is
an isometry if and only if $g$ is isometry (note that, in
particular, $f$ is an
isometry). An essential contraction $f$ from $E$ into an injective $\mathcal H $-metric space $E'$ is called an {\it injective envelope} of $E$. We will
rather say that $E'$ is  \textit{an injective envelope}  of  $E$. We
can view an injective envelope of a metric space $E$ as a minimal
injective $\mathcal H $-metric space containing (isometrically) $E$.  Two injective envelopes of $E$ are   isomorphic via an isomorphism which is the identity over $E$. This allows to talk about "the" injective envelope of $E$; we will denote it by $\mathcal N(E)$. A particular injective envelope of $E$ will be called a \emph{representation} of $\mathcal N(E)$. The construction of  
injective envelope is based upon the notion of\textit{
minimal metric form}. A \emph {weak metric form} is every map $
f:E\longrightarrow \mathcal H $ satisfying $d(x,y)\leq f(x)\cdot \overline {f(y)}$ for all $x,y\in E$.  This is a \emph{metric form} if in addition $f(x) \leq d(x,y) \cdot f(y)$ for all $
x,y\in E.$ Equivalently, $f$ is a metric form if and only if  $d_{\mathcal H }(d(x,y),f(x))\leq f(y)$ for all $
x,y\in E$. A (weak) metric form is  \textit{minimal} if there is no other  (weak) metric form $g$
satisfying $g\leq f$ (that is $g(x)\leq f(x)$ for all $x\in E$). Since every weak metric form majorizes a metric form,  the two notions of minimality coincide.  As shown in
\cite{JaMiPo} \emph{every $\mathcal H $-metric space
has an injective envelope;   the space of
minimal metric forms is a representation of it, (cf. Theorem 2.2 of \cite{kabil-pouzet})}.

We give below a new characterization of the injective envelope. 

\begin{theorem}\label{thm:main2}
Let $(E, d)$ be a metric space over $\mathcal H$ and  $X\subseteq E$.  
Then $(E, d)$ is isomorphic to the injective envelope of $(X, d_{\restriction X})$  iff for every metric space   $(E^{\prime}, d')$, $X'\subseteq E'$, every isometry $f: (X, d_{\restriction X})$ onto $(X', d'_{\restriction X'})$:

\begin{enumerate}[{(i)}]
\item Every non-expansive map    $\overline f: (E, d)\rightarrow (E',d')$, if any,  which extends $f$ is  an isometric embedding of $(E,d)$ into $(E',d')$; 

\item The isometry $f^{-1}$ extends to a non-expansive map from  $(E', d')$ into $(E, d)$. \end{enumerate}\end{theorem}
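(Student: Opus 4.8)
The plan is to establish the two directions of the equivalence separately, extracting both from the known representation of the injective envelope as the space of minimal metric forms (Theorem 2.2 of \cite{kabil-pouzet}) together with the characterization of injectivity and essentiality recalled above.

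For the forward direction, assume $(E,d)$ is (isomorphic to) the injective envelope of $(X, d_{\restriction X})$, so the inclusion $\iota: X\hookrightarrow E$ is an essential contraction into an injective space. Given $(E',d')$, $X'\subseteq E'$ and an isometry $f:X\to X'$, first I would prove (ii): since $(E,d)$ is injective, and $f^{-1}:X'\to X\subseteq E$ is a contraction while the inclusion $X'\hookrightarrow E'$ is an isometry, the defining property of injectivity yields a contraction $h:E'\to E$ with $h_{\restriction X'}=f^{-1}$. For (i), suppose $\overline f:E\to E'$ extends $f$; compose with the contraction $h$ of (ii) to get $h\circ\overline f:E\to E$, a contraction which is the identity on $X$. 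Because $\iota:X\to E$ is essential and $h\circ\overline f$ restricted appropriately is an isometry on $X$, essentiality forces $h\circ\overline f$ to be an isometry on all of $E$; hence $\overline f$ is an isometric embedding (it is injective and distance-nonincreasing, and $d'(\overline f x,\overline f y)\geq d((h\overline f)x,(h\overline f)y)=d(x,y)$, while $\leq$ holds since $\overline f$ is a contraction).

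For the converse, assume $(E,d)$ satisfies (i) and (ii) for all such data. I would take for $(E',d')$ a genuine representation $\mathcal N(X)$ of the injective envelope of $(X,d_{\restriction X})$ — the space of minimal metric forms — with $X'$ its canonical isometric copy of $X$ and $f:X\to X'$ the canonical isometry. By injectivity of $\mathcal N(X)$, the contraction $f:X\to X'\subseteq E'$ extends to a contraction $\overline f:E\to E'$; property (i) then says $\overline f$ is an isometric embedding. Property (ii) gives a contraction $h:E'\to E$ extending $f^{-1}$, so $h\circ\overline f:E\to E$ is a contraction fixing $X$ pointwise. A symmetric application of the hypothesis with the roles reversed — using that $\overline f(E)$ is a retract of $E'$ hence injective, and that the inclusion $X\hookrightarrow E$ is itself (by (i) applied to $f=\mathrm{id}_X$, noting every nonexpansive extension of $\mathrm{id}_X$, in particular $\mathrm{id}_E$, is an isometric embedding, which is automatic) — I would show $\overline f$ is onto. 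The cleanest route: $h\circ\overline f$ is a contraction $E\to E$ extending $\mathrm{id}_X$; instantiating (i) with $(E',d')=(E,d)$, $X'=X$, $f=\mathrm{id}$ and $\overline f = h\circ \overline f$ forces $h\circ\overline f$ to be an isometric embedding, and similarly $\overline f\circ h$ is an isometric embedding of $E'$; since $\mathcal N(X)=E'$ is an injective envelope it admits no proper injective (equivalently, retract-closed) subspace containing $X'$, forcing $\overline f$ to be onto. Thus $\overline f$ is a surjective isometry, and $E\cong\mathcal N(X)$.

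The main obstacle is the surjectivity argument in the converse: nonexpansive maps into injective spaces exist in abundance, so one must genuinely exploit minimality of the injective envelope — concretely, that $\mathcal N(X)$ has no proper retract (equivalently injective subspace) containing the isometric copy of $X$ — to rule out that $\overline f$ misses part of $E'$. I expect to package this by showing $\overline f\circ h : E' \to E'$ is an isometric embedding whose image is a retract of $E'$ containing $X'$, hence all of $E'$, which makes $\overline f$ both injective and surjective. Verifying that the relevant composites are isometries (not merely contractions) is where essentiality, repackaged through hypothesis (i), does the work; the remaining verifications (triangle inequality bookkeeping, that coretractions are isometries, that $f^{-1}$ is a contraction) are routine given the facts assembled in Section 2.
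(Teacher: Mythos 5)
Your proposal is correct in substance but argues differently from the paper at both key points. For the forward direction, part (i), the paper computes explicitly with minimal metric forms: it shows that $z\mapsto d'(\overline f(z),\overline f(x))$ is a metric form below the minimal form $z\mapsto d(z,x)$ on $X$, hence equal to it, and then recovers $d(x,y)$ as $\Sup\{d_{\mathcal H}(d(z,x),d(z,y)):z\in X\}$ and bounds this supremum by $d'(\overline f(x),\overline f(y))$ via residuation. You instead build the contraction $h:E'\to E$ extending $f^{-1}$ (available by injectivity of $E$), observe that $h\circ\overline f$ fixes $X$, and invoke essentiality of $X\hookrightarrow E$ to conclude $h\circ\overline f$ is an isometry, whence the sandwich $d(x,y)=d((h\overline f)x,(h\overline f)y)\leq d'(\overline f x,\overline f y)\leq d(x,y)$. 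This is shorter and purely categorical; the paper's computation is what you get by unwinding essentiality through the minimal-metric-form representation, and it has the side benefit of exhibiting the distance formula used elsewhere. For the converse, both proofs take $E'=\mathcal N(X)$, produce $\overline f$ by injectivity of $E'$ and $h$ (the paper's $g$) by hypothesis (ii), and reduce everything to surjectivity of $\overline f$. The paper settles this in one line: $\overline f\circ g$ is a contraction of $E'$ fixing $X'$, hence is the identity because each element of $E'$ is a minimal metric form. Your route — show $\overline f\circ h$ is an isometric self-embedding, so its image is an injective subspace of $E'$ containing $X'$, hence all of $E'$ by minimality of the envelope — also works, but is longer and needs two standard facts (an isometric copy of an injective space is injective and is therefore a retract; the envelope has no proper injective subspace containing $X'$).

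One step needs repair as written: you say that ``similarly $\overline f\circ h$ is an isometric embedding of $E'$'' by instantiating hypothesis (i). Hypothesis (i) only governs non-expansive maps \emph{out of} $(E,d)$, the space being characterized, so it says nothing about $\overline f\circ h:E'\to E'$. What you need here is the corresponding property of $E'$ itself, which holds because $E'$ \emph{is} the injective envelope: either invoke essentiality of $X'\hookrightarrow E'$ directly, or apply the already-proved forward direction of the theorem to $E'$. With that substitution (or, more economically, with the paper's one-line minimal-form argument that $\overline f\circ h=\mathrm{id}_{E'}$), your converse goes through.
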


\begin{proof}
Suppose that $(E, d)$ is isomorphic to the injective envelope of $(X, d_{\restriction X})$. Since $(E, d)$ is injective, the isometry $f^{-1}$ extends to a non-expansive map from  $(E', d')$ into $(E,d)$.  This proves that $(ii)$ holds.  The proof that $(i)$ holds relies on the properties of metric forms. To prove that $f$ is an isometry embedding amounts to  prove the equality: 
\begin{equation}\label{eq:isom1}
d'(f(x), f(y))= d(x,y)
 \end{equation}
  for  all $x,y \in E$. Set  $X'':= f(X)$. For every $x\in E$, let  $h_x: X\rightarrow \mathcal H$  and $g_{x}: X\rightarrow \mathcal H$ be the maps defined by setting $h_x(z)=d(z,x)$  and $g_{x}(z)=d'(f(z), f(x))$ for $z\in X$. Since $E$ is the injective envelope of $X$, $h_x$ is a minimal metric form; since $f$ is non-expansive and induces  an isometry from $X$ onto $X''$, the map $g_{x}$  is a  metric form below $h_x$, hence $h_x=g_x$. It follows that 
  \begin{equation}\label{eq:isom2}
  d(z, x)= d(f(z), f(x))
  \end{equation}
  for every $z\in X$. 
  
  Let $x,y\in E$. By construction of the injective envelope, its elements identify to minimal metric forms over $X$, hence $x$ and $y$ identify respectively to $h_x$ and $h_y$ and \begin{equation}\label{eq:isom3}
 d (x,y)= d_{\mathcal H}(h_x,h_y):= \Sup\{d_{ \mathcal H}( d(z, x), d(z,y)): z\in X\}. 
 \end{equation}
  
  Set $\alpha:= d'(f(x), f(y))$. Let $z\in E$. By definition,  $d_{\mathcal H}( d'(f(z), f(x)), d'(f(z), f(y))) = \Min D( d'(f(z), f(x)), d'(f(z), f(y)))$ where 
  $D(d'(f(z), f(x)), d'(f(z), f(y)))= \{r\in \mathcal H: d'(f(z), f(x))\leq d'(f(z), f(y))\cdot \overline r \; \text{and}\; d'(f(z), f(y))\leq d'(f(z), f(x))\cdot r\}$. From the triangular inequality, we have 
  $d'(f(z), f(y))\leq d'(f(z), f(x))\cdot \alpha$ and $d'(f(z), f(x))\leq d'(f(z), f(y))\cdot \overline \alpha$. Hence, $\alpha \in D( d'(f(z), f(x)), d'(f(z), f(y)))$.
   \noindent The inequality $$d_{\mathcal H}( d'(f(z), f(x)), d'(f(z), f(y))) \leq \alpha$$ follows.
 Consequently,    
  $$\Sup \{d_{\mathcal H}( d'(f(z), f(x)), d'(f(z), f(y))): z\in E\} \leq \alpha.$$  
 With Equalities   (\ref{eq:isom2}) and (\ref{eq:isom3})we get:
 $d(x,y)= \Sup \{d_{\mathcal H}( d(z, x), d(z, y)): z\in E\} \leq \alpha$.
 Since $f$ is non expansive, we have $\alpha\leq d(x,y)$. Thus $d(x,y)= \alpha$ that is Equality (\ref{eq:isom1}) holds. 
  
 Conversely, suppose that $(i)$ and $(ii)$ hold. Let   $(E', d')$  be the injective envelope of $(X, d_{\restriction X})$ and $f$ be the identity map from $(X, d_{\restriction X})$ onto itself. Applying $(ii)$, the map $f^{-1}$ extends to a non-expansive map $g$ from $(E', d')$ into $(E,d)$. Since $(E',d')$ is  injective, the map $f$ extends to a non-expansive map $\overline f$ from $(E,d)$ into $(E',d')$.   The map $f\circ g$ is non-expansive and is the  identity on $X$. Since $(E',d')$ is the injective envelope of  $(X, d_{\restriction X})$, $f\circ g$ is the identity on $E'$ (note that elements of $E'$ identify to minimal metric forms over $X$), hence $g$ is injective and $f$ is surjective. Now by $(i)$, $f$ is an isometry on its image. Hence $f$ is an isometry of $(E,d)$ onto $(E',d')$. Thus $(E,d)$ is the injectyive envelope of $(X, d_{\restriction X})$ as claimed. 
\end{proof}

Up to Theorem \ref{thm:indec}, we include the few facts we need about injective envelopes
of two-element metric spaces ( see \cite{kabil-pouzet} for proofs).

Let $\mathcal H $ be a Heyting algebra and $v\in \mathcal H $. Let $E:=\{x,y\}$ be a two-element $\mathcal H $-metric space such that $d(x,y)=v$. We denote by $\mathcal{N}_v$  the  injective envelope of $E$. We give three representations of it.  
For the fist one, we consider the set of minimal metric forms over $E$.  That is,  in this case, the set of minimal pairs $h:= (h_x,h_y)\in \mathcal H^2$ such that $h_x\cdot \overline h_y\geq v$,  the set $\mathcal H^2$ being equipped with the product ordering. Each element $z\in N_v$ identifies to the pair $(d_{\mathcal N_v}(x,z), d_{\mathcal N_v}(y,z))$; in particular, $x$ and $y$ identify to $(1, \overline v)$ and to $(v, 1)$ respectively. We equip $\mathcal H ^{2}$
with the supremum distance:
$$d_{\mathcal H ^{2}}\left( (u_{1},u_{2}), (u'_{1},u'_{2})\right)
:=d_{\mathcal H }(u_{1}, u'_{1})\vee d_{\mathcal H }(u_{2}, u'_{2}).$$

With the induced distance, $\mathcal N_v$ becomes a metric space. If $(h_x,h_y)$ and $(h'_x,h'_y)$ are two elements of $\mathcal N_v$, their distance is $d_{\mathcal H }(h_x, h'_{x})\vee d_{\mathcal H }(h_{y}, h'_{y}).$ In fact, 

\begin{equation} \label{eq:duality}d_{\mathcal H }(h_x, h'_{x})= d_{\mathcal H }(h_{y}, h'_{y}).
\end{equation}

The proof is easy: We  prove that if  $h_x\leq h'_x\cdot \overline r$ for some $r\in \mathcal H$, then $h'_y\leq h_y\cdot r$. This and the corresponding inequality with $y$ replacing $x$ will leads to  (\ref{eq:duality}).  Suppose that $h_x\leq h'_x\cdot \overline r$. We have $v\leq h_x\cdot \overline h_y\leq h'_x\cdot \overline r\cdot \overline h_y$ and thus $r\leq h'_x\cdot \overline r\cdot \overline h_y$. Since $v\leq h'_x\cdot \overline h'_y$, we have $v\leq h'_x \cdot (\overline h'_y\wedge (\overline r\cdot \overline h_y))$ by distributivity. Since $(h'_x,h'_y)$ is a minimal metric form above $v$, we have $\overline h'_y=\overline h'_y\wedge (\overline r\cdot \overline h_y)$, that is $h'_y\leq h_y\cdot r$.

Due to the fact that in a minimal metric form  $(h_x, h_y)$ each component determines the other, we may prefer an other presentation of $\mathcal N_v$ as a subset of $\mathcal H$.  
Set $\mathcal{S}_{v}:=  \left\{ \lceil v\cdot  \beta^{-1} \rceil 
:\beta \in \mathcal H \right\}$;  equipped with the ordering induced by
the ordering over $\mathcal H $ this is  a complete lattice. A pair  $(h_x, h_y)$ belongs to $\mathcal{N}_{v}$ if and only if   $h_x= \lceil v\cdot \overline {h_y}^{-1} \rceil$ and $\overline h_y =\lceil h_x^{-1}\cdot v \rceil$. This yields a correspondence between   $ \mathcal {N}_{v}$ and  $\mathcal{S}_{v}$.

Now, in several instances, e.g. in the case of the sum of two metric spaces (see subsection \ref{subsection:sum}),  it is preferable to consider the set  $\mathcal{C}_{v}$ of of all pairs
$(u_{1},u_{2})\in \mathcal H ^{2}$ such that $v\leq u_{1}\cdot u_{2} $. Once equipped with the ordering induced by the product ordering on $\mathcal H ^{2}$, we can consider the set   $%
\mathcal{N'}_{v}$  of its minimal elements. Each minimal element   $(u_1, u_2)$ yields the  minimal metric form $(u_1, \overline {u}_2)$ (and conversely). The distance over $\mathcal H^{2}$ is different from the previous case.  We have to equip $\mathcal H ^{2}$
with the product of the distance $d_{\mathcal H}$ with the distance $d'_{\mathcal H}$ defined on $\mathcal H$ by $d'_{\mathcal H} (u_2, u'_2):={d_{\mathcal H}(\overline u_2, \overline u'_2)}$.

\begin{lemma} (Lemma 2.3,  Proposition 2.7 of \cite {kabil-pouzet})
The space $\mathcal{N}_{v}$ equipped with the supremum
distance and the set $\mathcal{S}_{v}$  equipped with
the distance induced by the distance over $\mathcal H $   are injective envelopes of   the two-element metric spaces $\{(1, v), (v,1)\}$ and  $\left\{
1,v\right\}$ respectively. These spaces are isometric to the injective envelope of $E:= \{x,y\}$. 
\end{lemma}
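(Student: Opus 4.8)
The plan is to deduce the Lemma from the general characterization of the injective envelope in Theorem~\ref{thm:main2}, together with the duality identity \eqref{eq:duality} and the residuation facts already recorded. First I would treat $\mathcal{N}_v$, the space of minimal metric forms over $E=\{x,y\}$. By the construction of the injective envelope quoted from \cite{JaMiPo} (the space of minimal metric forms is a representation of $\mathcal N(E)$), the set of minimal pairs $(h_x,h_y)\in\mathcal H^2$ with $h_x\cdot\overline h_y\geq v$, equipped with the induced supremum distance, is an injective envelope of $E$; the only thing to check is that the two-element subspace sitting inside it is exactly $\{(1,\overline v),(v,1)\}$ with $(1,v)$ and $(v,1)$ playing the roles of $x$ and $y$ under the present convention $d(x,y)=v$ and $d(y,x)=\overline v$. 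This is immediate: $(1,\overline v)$ is a minimal metric form (it is the metric form attached to $x$, since $h_x(z)=d(z,x)$ gives $d(x,x)=1$ and $d(y,x)=\overline v$), likewise $(v,1)$ for $y$, and $d_{\mathcal H^2}((1,\overline v),(v,1))=d_{\mathcal H}(1,v)\vee d_{\mathcal H}(\overline v,1)=v\vee\overline v\cdot$. Here one uses $d_{\mathcal H}(1,v)=\lceil \bar 1\cdot\bar v^{-1}\rceil\vee\lceil 1^{-1}\cdot v\rceil$, which simplifies to $v$ since $1$ is the least element; hence the induced distance on the two-point subspace is $v$, as required.

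Next I would pass to $\mathcal{S}_v=\{\lceil v\cdot\beta^{-1}\rceil:\beta\in\mathcal H\}$ with the distance induced from $d_{\mathcal H}$. The point is that the correspondence $(h_x,h_y)\mapsto h_x$ recorded in the text — with inverse $h_x\mapsto(h_x,\overline{\lceil h_x^{-1}\cdot v\rceil})$ — is a bijection between $\mathcal N_v$ and $\mathcal S_v$: every minimal metric form satisfies $h_x=\lceil v\cdot\overline h_y^{-1}\rceil$ and $\overline h_y=\lceil h_x^{-1}\cdot v\rceil$ by minimality and the definition of residuals, and conversely each $\lceil v\cdot\beta^{-1}\rceil$ arises this way by taking $\beta=\overline h_y$. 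I would then invoke \eqref{eq:duality}: since $d_{\mathcal H}(h_x,h'_x)=d_{\mathcal H}(h_y,h'_y)$, the supremum distance on $\mathcal N_v$ equals $d_{\mathcal H}(h_x,h'_x)$ on the first coordinate alone, so the bijection above is an isometry from $(\mathcal N_v,d_{\mathcal H^2})$ onto $(\mathcal S_v,d_{\mathcal H})$. Under this isometry $x=(1,\overline v)$ goes to $1$ and $y=(v,1)$ goes to $\lceil v\cdot\overline 1^{-1}\rceil=\lceil v\rceil$; one checks $\lceil v\cdot 1^{-1}\rceil=v$, so the two-point subspace maps to $\{1,v\}$ with mutual distance $v$. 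Since an isometric image of an injective envelope of a space isometric to $E$ is again an injective envelope of (a copy of) $E$ — injectivity, hyperconvexity and the essential-embedding property are all isometric invariants, and two injective envelopes of isometric spaces are isomorphic — $\mathcal S_v$ with $\{1,v\}$ marked is an injective envelope of $E$.

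For completeness I would spell out that all three marked spaces $\{(1,v),(v,1)\}\subseteq\mathcal N_v$, $\{1,v\}\subseteq\mathcal S_v$, and $E=\{x,y\}$ are pairwise isometric (each has exactly two points at mutual distance $v$, with the involution swapping the directed distances $v$ and $\overline v$ correctly), and then cite the uniqueness-up-to-isomorphism of injective envelopes from Subsection~2.2 to conclude that $\mathcal N_v$ and $\mathcal S_v$ are each isometric to the injective envelope of $E$. I expect the main (though still minor) obstacle to be purely bookkeeping: keeping the sign conventions straight between the two presentations — $\mathcal N_v$ uses metric-form pairs $(h_x,h_y)$ with the product order, while the passage to $\mathcal S_v$ involves the complement $\overline h_y$ — and making sure that under the stated convention $d(x,y)=v$ the representatives are $(1,\overline v),(v,1)$ rather than $(1,v),(v,1)$; the text's phrasing "$\{(1,v),(v,1)\}$" should be read as the image in $\mathcal C_v$-coordinates, and I would reconcile this explicitly. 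Everything else reduces to the residuation identities $\lceil v\cdot 1^{-1}\rceil=v$, $d_{\mathcal H}(1,v)=v$, and the already-proved equality \eqref{eq:duality}, none of which requires new ideas.
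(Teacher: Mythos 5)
The paper states this lemma without proof (it is imported from Lemma~2.3 and Proposition~2.7 of \cite{kabil-pouzet}), and your argument assembles exactly the ingredients the surrounding text supplies --- the representation of the injective envelope by minimal metric forms, the first-coordinate bijection onto $\mathcal{S}_v$, Equality~(\ref{eq:duality}), and the reconciliation of $(1,\overline v)$ with the $\mathcal{C}_v$-coordinates $(1,v)$ --- so it is the intended proof and is correct. The only slip is the intermediate step $d_{\mathcal H}(1,v)\vee d_{\mathcal H}(\overline v,1)=v\vee\overline v$: in fact $d_{\mathcal H}(\overline v,1)=\lceil v\cdot 1^{-1}\rceil\vee\lceil \overline v^{-1}\cdot 1\rceil=v\vee 1=v$ (or apply~(\ref{eq:duality}) directly to the two minimal forms), so the supremum is $v$, as your next sentence correctly concludes anyway.
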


%

\begin{theorem} (Theorem 2.9 \cite{kabil-pouzet})\label{thm:injective-proper}
 Let $E$ be a $\mathcal H $-metric space. If $E$ is the injective envelope
of a two-element set then $E$ contains no proper isometric
subspace.
\end{theorem}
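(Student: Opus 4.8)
The plan is to show that an injective envelope $E$ of a two-element space $\{x,y\}$ with $d(x,y)=v$ has no proper isometric subspace by exploiting the concrete representation of $E$ as the set of minimal metric forms over $\{x,y\}$, i.e. the set $\mathcal N_v$ of minimal pairs $(h_x,h_y)\in\mathcal H^2$ with $h_x\cdot\overline{h_y}\geq v$, equipped with the supremum distance. Suppose $F\subseteq E$ is an isometric subspace; I want to prove $F=E$. The key observation is that by the minimality/essentiality built into the injective envelope, every element of $E$ is ``pinned down'' by its distances to the two base points $x=(1,\overline v)$ and $y=(v,1)$, since the pair $(d(z,x),d(z,y))$ \emph{is} the minimal metric form identified with $z$. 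So if $F$ misses some point $z_0\in E$, I need to produce a contradiction with the fact that the inclusion $F\hookrightarrow E$ is essential (an injective envelope is, in particular, an essential extension of $\{x,y\}$, and essentiality is inherited: any isometric subspace of $E$ containing $\{x,y\}$ must be all of $E$ because $E$ is a \emph{minimal} injective space containing $\{x,y\}$).

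More concretely, here is the route I would take. First reduce to the case $x,y\in F$: since $F$ is isometric to a subspace, and any injective envelope of $\{x,y\}$ embeds $\{x,y\}$, I can use part $(ii)$ of the characterizing property of injective envelopes, or simply note the standard fact that an injective envelope is an essential extension, so the subspace $\{x,y\}\subseteq F\subseteq E$ together with essentiality of $\{x,y\}\hookrightarrow E$ forces essentiality of $\{x,y\}\hookrightarrow F$, whence $F$ too is an injective envelope of $\{x,y\}$ once we know $F$ is injective. But that last point is exactly what we cannot assume. So instead I would argue directly: since $E$ is injective, the inclusion $\iota:F\hookrightarrow E$ composed with a retraction. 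Take $r:E\to F$ a non-expansive retraction (exists because $E$ is injective and $F$ is a metric space — wait, injectivity gives retractions onto $E$, not from $E$). The correct move: $F$ need not be injective, so I instead use that $E$ is an \emph{essential} extension of $\{x,y\}$: the inclusion $j:\{x,y\}\to E$ is essential, meaning for every non-expansive $g:E\to E''$, $g\circ j$ is an isometry iff $g$ is. Now consider the inclusion $g=\iota:F\hookrightarrow E$ — no, $\iota$ goes into $E$ not out.

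Let me restate the plan cleanly. I would use the representation $E=\mathcal S_v=\{\lceil v\cdot\beta^{-1}\rceil:\beta\in\mathcal H\}$ with the induced distance, together with Equality (\ref{eq:duality}). Suppose $F\subsetneq E$ is isometric. Pick $p_0\in E\setminus F$; in the $\mathcal S_v$-picture $p_0=h_x$ for the minimal metric form $(h_x,h_y)$ with $h_y=\lceil h_x^{-1}\cdot v\rceil$ (bar on $h_y$ as needed). Because $E$ is the injective envelope, it equals the space of \emph{minimal} metric forms, so $h_x(z)=d(z,x)$-type data shows $p_0$ is determined by the function $z\mapsto d_{\mathcal H}(d(z,x),d(z,y))$ restricted to $\{x,y\}$ — but that only involves $x,y$, which lie in $F$. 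The point: for $p_0$ itself, $d(p_0,x)$ and $d(p_0,y)$ are $h_x$ and $\overline{h_y}$; any element $q\in F$ with $d(q,x)=h_x$ and $d(q,y)=\overline{h_y}$ must, being a metric form over $\{x,y\}$ that is minimal (since $F$ is isometric inside $E$ and these are the $E$-distances), equal $p_0$ as a metric form, hence $q=p_0$, contradicting $p_0\notin F$. So the crux is to show \emph{some} element of $F$ realizes the pair $(h_x,\overline{h_y})$ as its distances to $x,y$. This is where I'd invoke injectivity of $E$ applied to the non-expansive map defined on $F\cup\{p_0\}$, or more slickly: the map $F\to\{x,y\}$'s injective envelope is essential and $F$ sits isometrically in the injective envelope $E$, so by the minimality clause in the definition of injective envelope (``$E'$ is a minimal injective metric space containing $\{x,y\}$''), $F$ cannot be a proper subspace unless $F$ fails to contain $\{x,y\}$ — which, after the first reduction, it does contain. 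The main obstacle, and the step deserving the most care, is justifying that the isometric subspace $F$ can be taken to contain $x$ and $y$ and then leveraging essentiality correctly: essentiality says a non-expansive map \emph{out of} $E$ that is isometric on $\{x,y\}$ is globally isometric, and one combines this with a retraction $E\to F$ guaranteed by applying injectivity of $E''$ — here one must be careful about which space is injective. I would resolve this by taking $E''$ an injective envelope of $F$, getting a non-expansive $e:E\to E''$ extending $F\hookrightarrow E''$; since $E$ is essential over $\{x,y\}$ and $e$ is isometric on $\{x,y\}$, $e$ is an isometry; but $e(E)\supseteq F$ and $e$ isometric forces $E''\supseteq e(E)\cong E\supseteq F$, and minimality of the injective envelope $E''$ of $F$ gives $E''=$ (injective envelope of $F$) $\cong F$ only if $F$ was already injective — instead I compare sizes: $e$ isometric and $F\subseteq e(E)\subseteq E''$ with $E''$ a minimal injective extension of $F$ forces $e(E)=E''$ hence $E\cong E''$ is an injective envelope of $F$ too; combined with $E$ being an injective envelope of $\{x,y\}\subseteq F$ and uniqueness of injective envelopes, the inclusion $F\subseteq E$ must be all of $E$. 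That chain is the heart of the argument and the place to be most careful.
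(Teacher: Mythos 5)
The paper does not reprove this statement; it is quoted verbatim from Theorem 2.9 of \cite{kabil-pouzet}, so your attempt can only be judged on its own terms. As written it is not a proof: it is a sequence of openly abandoned starts, and the one step you yourself flag as ``the main obstacle'' --- reducing to the case where the proper isometric subspace $F$ contains the two base points $x$ and $y$ --- is never carried out. That reduction is essentially the whole content of the theorem. If $g\colon E\to F$ is an isometry onto a proper subspace, all you obtain is that $F$ contains the pair $a:=g(x)$, $b:=g(y)$ with $d(a,b)=v$; nothing forces $\{a,b\}=\{x,y\}$, because a pair realizing the value $v$ need not be unique in $E$. The Boolean example of Section 2 already shows this: $E=\mathcal P(\{1,2\})$ is the injective envelope of $\{\emptyset,\{1,2\}\}$, yet $d(\{1\},\{2\})=\{1,2\}=d(\emptyset,\{1,2\})$, so an abstract isometry could a priori carry $\{x,y\}$ onto a different pair. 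Consequently the clause ``after the first reduction, it does contain $\{x,y\}$'' has nothing behind it, and every later appeal to essentiality of $\{x,y\}\hookrightarrow E$ (to conclude that your map $e$ is a global isometry, that $e(E)=E''$, etc.) silently rests on the unproved reduction.

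Two further remarks. First, the special case you do handle can be closed far more directly than your final chain: $F$, being isometric to the injective space $E$, is itself injective, hence (applying injectivity of $F$ to $\mathrm{id}_F$ and the isometric inclusion $F\hookrightarrow E$) there is a non-expansive retraction $r\colon E\to F$ fixing $F$ pointwise; if $x,y\in F$ then $r$ fixes $x$ and $y$, and since each $z\in E$ is a minimal metric form over $\{x,y\}$ while $w\mapsto d(r(z),w)$ is a metric form below it, minimality gives $r=\mathrm{id}_E$ and so $F=r(E)=E$. (You hesitate both over whether $F$ is injective and over which space supplies the retraction; both facts are available, and backing away from them is what sends your write-up into its loops.) Second, in the general case one must argue with the representation $\mathcal S_v$ or $\mathcal N_v$ that an isometric self-embedding is onto even when it moves $x$ and $y$; your key sentence, that ``some element of $F$ realizes the pair $(h_x,\overline{h_y})$ as its distances to $x,y$,'' is precisely the surjectivity being proved, so at its crux the proposal is circular. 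The missing step must be supplied, e.g.\ from the argument in \cite{kabil-pouzet}.
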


\begin{proposition} (Corollary 3.3 \cite{kabil-pouzet})\label{prop:finiteness}
The following properties are equivalent:
\begin{enumerate}[(i)]

\item  The injective envelope of any finite metric space is finite;

\item The injective envelope of any two-element metric space   is finite.

\end{enumerate}
\end{proposition}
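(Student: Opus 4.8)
The implication $(i)\Rightarrow(ii)$ is trivial, since a two-element metric space is a finite metric space, so all the content is in $(ii)\Rightarrow(i)$. The plan is to embed the injective envelope of an arbitrary finite metric space isometrically into a \emph{finite} injective space, built as a direct product from the injective envelopes of the two-element subspaces; finiteness then follows because a subspace of a finite set is finite. The crux of the argument will be arranging that this product embedding is isometric, despite the supremum distance on the product.

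Fix a finite $\mathcal H$-metric space $(E,d)$ with $E=\{x_{1},\dots ,x_{n}\}$ pairwise distinct. For $i<j$ write $E_{ij}:=\{x_{i},x_{j}\}$ for the two-element subspace of $E$, and let $\iota_{ij}\colon E_{ij}\to\mathcal N_{ij}$ be a representation of its injective envelope; by $(ii)$ each $\mathcal N_{ij}$ is finite. I would first note that $Q:=\prod_{i<j}\mathcal N_{ij}$, equipped with the supremum distance, is again injective and finite: by the coincidence of injectivity with being a retract of a power of $(\mathcal H,d_{\mathcal H})$, each $\mathcal N_{ij}$ is such a retract, and forming products of retractions and coretractions coordinatewise (which preserves non-expansiveness for the supremum distances) exhibits $Q$ as a retract of a power of $(\mathcal H,d_{\mathcal H})$, hence as an injective space; it is finite as a finite product of finite sets.

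The main step is to construct an isometric embedding $\varphi\colon(E,d)\to Q$. For each $i<j$, injectivity of $\mathcal N_{ij}$ lets us extend the non-expansive map $\iota_{ij}$, defined on the subspace $E_{ij}$ of $E$, to a non-expansive map $g_{ij}\colon(E,d)\to\mathcal N_{ij}$; set $\varphi(x_{k}):=(g_{ij}(x_{k}))_{i<j}\in Q$. For any $k,l$ one has $d_{Q}(\varphi(x_{k}),\varphi(x_{l}))=\bigvee_{i<j}d_{\mathcal N_{ij}}(g_{ij}(x_{k}),g_{ij}(x_{l}))$; each term is $\le d(x_{k},x_{l})$ because $g_{ij}$ is non-expansive, while the term indexed by the pair $\{k,l\}$ equals $d_{\mathcal N_{kl}}(\iota_{kl}(x_{k}),\iota_{kl}(x_{l}))=d(x_{k},x_{l})$ since $g_{kl}$ extends the \emph{isometry} $\iota_{kl}$. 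Hence the join is exactly $d(x_{k},x_{l})$, so $\varphi$ is an isometric embedding into the finite injective space $Q$. This is the delicate point: the off-diagonal coordinates would in general enlarge distances, and it is precisely the freedom to use \emph{non-expansive} (rather than isometric) extensions $g_{ij}$, each dominated by the isometric one on its own pair, that makes $\varphi$ come out isometric.

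To finish I would use that the injective envelope of $E$ sits isometrically inside every injective space containing $E$. Identify $E$ with $\varphi(E)\subseteq Q$. Let $i\colon E\to\mathcal N(E)$ be the canonical essential embedding; injectivity of $\mathcal N(E)$ extends it to a non-expansive $r\colon Q\to\mathcal N(E)$, and injectivity of $Q$ extends the inclusion $E\hookrightarrow Q$ to a non-expansive $s\colon\mathcal N(E)\to Q$. Then $r\circ s\colon\mathcal N(E)\to\mathcal N(E)$ is non-expansive and restricts to the identity on $i(E)$; by essentiality of $i$ it is an isometry, and its image is an injective isometric subspace of $\mathcal N(E)$ containing $i(E)$, hence equals $\mathcal N(E)$ by minimality of the injective envelope among injective spaces containing $E$. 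Thus $r\circ s$ is an automorphism; in particular $s$ is injective, and from $d(a,b)=d(r s(a),r s(b))\le d_{Q}(s(a),s(b))\le d(a,b)$ we get that $s$ is an isometric embedding. Therefore $\mathcal N(E)$ is isometric to a subspace of the finite set $Q$, so it is finite, which establishes $(i)$. The only non-routine ingredients beyond the computation in the third paragraph are the interplay of essentiality with minimality used here; everything else is bookkeeping about products and supremum distances.
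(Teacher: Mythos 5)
The paper does not give a proof of this proposition; it is quoted as Corollary 3.3 of \cite{kabil-pouzet}, so there is nothing in the text to compare against line by line. Your argument is correct and is essentially the standard one for this statement: embed $E$ isometrically into the finite injective product $Q:=\prod_{i<j}\mathcal N_{ij}$ by choosing, for each pair, a non-expansive extension $g_{ij}$ of the essential (hence isometric) embedding $\iota_{ij}$, and note that the coordinate indexed by $\{k,l\}$ already realizes $d(x_k,x_l)$ while the remaining coordinates cannot exceed it, so the supremum distance comes out right; the envelope $\mathcal N(E)$ then embeds isometrically into the finite space $Q$ by the usual retraction--coretraction argument combined with essentiality. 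All the ingredients you use (a product of retracts of powers of $(\mathcal H,d_{\mathcal H})$ is again such a retract, essential contractions are isometries) are indeed available from Section 2, and the final paragraph's appeal to minimality, while slightly more than needed, is harmless since the displayed inequality chain already shows $s$ is an isometric embedding.
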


 A metric space is \textit{linearly orderable} if there is a
linear ordering $\preceq $ on $E$ such that $d(x,z)\preceq d(x,y)$ and $d(z,y)\preceq d(x,y)$ for all $%
x,y,z\in E$ with $x\preceq z\preceq y.$

\begin{proposition}  (Fact 5 \cite{kabil-pouzet})\label{prop:linearly}
 Let $u\in \mathcal H $. The space  $\mathcal{S}_{u}$ is linearly orderable if and only if  the ordering is induced by the order on $\mathcal H $ or by its  reverse. 
\end{proposition}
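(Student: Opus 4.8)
The plan is to analyze when the complete lattice $\mathcal{S}_u = \{\lceil u\cdot \beta^{-1}\rceil : \beta \in \mathcal H\}$, sitting inside $\mathcal H$ and carrying the induced distance $d_{\mathcal H}$, admits a linear order $\preceq$ witnessing linear orderability. One direction is routine: if $\preceq$ is the order that $\mathcal{S}_u$ inherits from $\mathcal H$ (or its reverse), then I would check directly from the definition of $d_{\mathcal H}(p,q) = \lceil \bar p\cdot(\bar q)^{-1}\rceil \vee \lceil p^{-1}\cdot q\rceil$ that for $p \leq z \leq q$ in $\mathcal{S}_u$ one has $d_{\mathcal H}(p,z) \leq d_{\mathcal H}(p,q)$ and $d_{\mathcal H}(z,q) \leq d_{\mathcal H}(p,q)$; this should follow from the monotonicity/antitonicity properties of residuation in each of the two residuated terms (when $z$ sits between $p$ and $q$, each of $\lceil \bar p\cdot(\bar z)^{-1}\rceil$ and $\lceil z^{-1}\cdot q\rceil$ is dominated by the corresponding term for $(p,q)$). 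So any $\mathcal{S}_u$ is linearly orderable using its own induced order.

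For the converse — the substantive direction — I would suppose $\preceq$ is an arbitrary linear order making $\mathcal{S}_u$ linearly orderable and show $\preceq$ must coincide with the induced order $\leq$ on $\mathcal H$ or with $\geq$. The key structural input is Theorem \ref{thm:injective-proper}: $\mathcal{S}_u$, being (isometric to) the injective envelope of a two-element space, has no proper isometric subspace. The two distinguished points are $1$ and $u$ (identified with $x$ and $y$), and $1$ is the least element of $\mathcal{S}_u$ in the $\mathcal H$-order while $u$ is the greatest. I would argue that in the linear order $\preceq$, the pair $\{1,u\}$ must be the two extreme elements: if some $z \in \mathcal{S}_u \setminus\{1,u\}$ satisfied, say, $z \preceq 1 \preceq u$ or $1 \preceq u \preceq z$, the linear-orderability inequalities applied to the triple would force $d(1,u)$ to dominate $d(z,1)$ or $d(z,u)$ in a way that lets $\{1,u,\dots\}$ collapse, ultimately contradicting the absence of proper isometric subspaces (one shows the interval of $\preceq$ between $1$ and $u$ is already isometric to all of $\mathcal{S}_u$, so nothing lies outside it). Hence WLOG $1$ is the $\preceq$-least and $u$ the $\preceq$-greatest element.

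Then for any $z \in \mathcal{S}_u$ we have $1 \preceq z \preceq u$, so linear orderability gives $d_{\mathcal H}(1,z) \preceq d_{\mathcal H}(1,u) = u$ and $d_{\mathcal H}(z,u) \preceq u$; but more importantly, for any two points $z \preceq z'$ we have $1 \preceq z \preceq z' \preceq u$, hence $d_{\mathcal H}(1,z) \preceq d_{\mathcal H}(1,z')$, i.e. the map $z \mapsto d_{\mathcal H}(1,z) = \lceil \bar 1 \cdot (\bar z)^{-1}\rceil \vee \lceil 1^{-1}\cdot z\rceil = z$ (using $1$ as neutral element and the least element, so $\lceil 1^{-1}\cdot z\rceil = z$ and the other term is $1$) is $\preceq$-to-$\leq$ monotone. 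Thus $z \preceq z'$ implies $z \leq z'$, and since both are linear orders on the same set this forces $\preceq\; =\; \leq$. Symmetrically, tracking $d_{\mathcal H}(z,u)$ instead of $d_{\mathcal H}(1,z)$ handles the case where $u$ is $\preceq$-least and $1$ is $\preceq$-greatest, yielding $\preceq\; = \;\geq$.

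The main obstacle I anticipate is the middle step: rigorously ruling out that a third element of $\mathcal{S}_u$ could be $\preceq$-smaller than $1$ or $\preceq$-larger than $u$. The clean way is to show that if $a \preceq b$ are the $\preceq$-extreme elements, then the map $z \mapsto d_{\mathcal H}(a,z)$ embeds the $\preceq$-interval $[a,b]_\preceq$ isometrically (this is basically the computation above, once we know $d_{\mathcal H}(a,z) = z$ when $a$ is a least element), so $[a,b]_\preceq = \mathcal{S}_u$ and therefore $\{a,b\} = \{1,u\}$; the only thing needing care is the algebraic identity $d_{\mathcal H}(1,z) = z$, which I would extract directly from the residuation formulas using that $1$ is simultaneously the monoid unit and the bottom of $\mathcal H$ (so $\overline 1$ is the top, $\lceil \text{top}\cdot(\bar z)^{-1}\rceil$ relates to $\bar z$, and one uses the involution identity $\overline{\lceil \beta^{-1}\cdot v\rceil} = \lceil \bar v\cdot(\bar\beta)^{-1}\rceil$ recalled in the text).
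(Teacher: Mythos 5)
The paper does not actually prove this statement---it is quoted verbatim as Fact 5 of \cite{kabil-pouzet}---so your proposal can only be judged on its own terms, and it has two genuine problems.

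First, your ``routine'' direction ends with the assertion that \emph{any} $\mathcal{S}_u$ is linearly orderable via its own induced order. This is false: the order $\mathcal{S}_u$ inherits from $\mathcal H$ is in general only a partial order (it is a complete lattice, not a chain; the six-element lattice of Figure~1 for $F=\uparrow\{aa,bb\}$ has two incomparable elements), and a non-linear order cannot witness linear orderability. If every $\mathcal{S}_u$ were linearly orderable, Theorem \ref{prop:ferrerslanguage} would make every final segment Ferrers, which the paper explicitly refutes. The correct content of the ``if'' direction is conditional: \emph{when} the induced order happens to be linear, the betweenness inequalities hold (your residuation computation for that part is essentially right, using that $\lceil \bar p\cdot(\bar z)^{-1}\rceil=1$ whenever $p\leq z$, so only the terms $\lceil p^{-1}\cdot z\rceil$ and $\lceil z^{-1}\cdot q\rceil$ matter).

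Second, the pivotal step of the converse---that $1$ and $u$ must be the $\preceq$-extremes---is exactly the part you flag as an obstacle, and the mechanism you propose (Theorem \ref{thm:injective-proper}) is not usable as stated: that theorem excludes proper subspaces isometric to $\mathcal{S}_u$ itself, so you would first need an isometry of $\mathcal{S}_u$ onto the $\preceq$-interval $[1,u]_\preceq$, which you never produce. (It can be rescued: the map collapsing everything $\preceq$-below $1$ to $1$ and everything $\preceq$-above $u$ to $u$ is a contraction, so $[1,u]_\preceq$ is an injective retract containing $\{1,u\}$ and minimality of the envelope applies---but none of this is in your text.) A much shorter argument uses machinery already quoted in Section~2: each $z\in\mathcal{S}_u$ is a \emph{minimal} metric form $(d(1,z),d(u,z))$ over $\{1,u\}$; if $z\preceq 1\preceq u$, the betweenness inequalities give $d(z,u)\geq d(1,u)=u$, hence $(d(1,z),d(u,z))\geq(1,\bar u)$ componentwise, and minimality forces $z=1$; dually for $u$. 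Once that is in place, your final step (the identity $d_{\mathcal H}(1,z)=z$, hence $z\preceq z'$ implies $z\leq z'$, hence $\preceq$ coincides with $\leq$ or with its reverse) is correct, and as a byproduct it shows that $\leq$ restricted to $\mathcal{S}_u$ is indeed total.
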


We say that a metric space is \textit{finitely indecomposable} if for every
finite family $(E_{i})_{i\in I}$ of metric spaces, $\underset{i\in I}{%
E\vartriangleleft \prod }E_{i}$ implies $E\vartriangleleft E_{i}$ for some $%
i\in I.$ This notion was used by E. Corominas for posets \cite{corominas}.

\begin{theorem}(Theorem 3.8 \cite{kabil-pouzet})\label{thm:indec}
Let $E$ be a finite absolute retract. The following properties are
equivalent:
\begin{enumerate}[(i)]
\item $E$ is finitely indecomposable;
\item $E$ is
the injective envelope of a two-element metric space $\{x,y\}$
such that the distance $ d(x,y)$ is join-irreducible in $\mathcal H $.
\end{enumerate}
\end{theorem}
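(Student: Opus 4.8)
The plan is to prove the two implications separately, using the three descriptions of $\mathcal N_v$ recalled above together with Theorem \ref{thm:injective-proper} (an injective envelope of a two‑element set has no proper isometric subspace). For $(ii)\Rightarrow(i)$ I would argue directly. Assume $E$ is the injective envelope of $\{x,y\}$, put $v:=d(x,y)$, assumed join‑irreducible, and, transporting along the canonical isomorphism fixing $\{x,y\}$, take $E=\mathcal N_v$, so that the points of $E$ are exactly the minimal metric forms over $\{x,y\}$, the point $z$ being the pair $(d(x,z),d(y,z))$. Given a finite family $(E_i)_{i\in I}$ with $E\vartriangleleft\prod_{i\in I}E_i$, realised by a coretraction $f=(f_i)_i$ and a retraction $g$, the fact that $f$ is an isometry and the definition of the supremum distance give $v=\bigvee_{i\in I}d_i(f_i(x),f_i(y))$, each summand being $\le v$ since each $f_i$ is non‑expansive. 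Join‑irreducibility of $v$ and finiteness of $I$ then yield some $i_0$ with $d_{i_0}(f_{i_0}(x),f_{i_0}(y))=v$, so $\{f_{i_0}(x),f_{i_0}(y)\}$ is a two‑element subspace of $E_{i_0}$ isometric to $\{x,y\}$. Injectivity of $E$ lets me extend this isometry ``backwards'' to a non‑expansive $h:E_{i_0}\to E$ with $h(f_{i_0}(x))=x$ and $h(f_{i_0}(y))=y$. The composite $h\circ f_{i_0}$ is non‑expansive and fixes $x$ and $y$, so for every $z$ one has $d(x,h f_{i_0}(z))\le d(x,z)$ and $d(y,h f_{i_0}(z))\le d(y,z)$; that is, the metric form attached to $h f_{i_0}(z)$ lies below the minimal one $z$, whence $h\circ f_{i_0}=id_E$. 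Thus $E\vartriangleleft E_{i_0}$ and $E$ is finitely indecomposable.

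For $(i)\Rightarrow(ii)$, let $E$ be finite, an absolute retract, finitely indecomposable, with at least two points. First I would embed $E$ isometrically into the \emph{finite} product $\prod_{(a,b)}\mathcal N_{d(a,b)}$ taken over ordered pairs of distinct points: for each such pair, injectivity of $\mathcal N_{d(a,b)}$ extends the isometry $\{a,b\}\to\mathcal N_{d(a,b)}$ onto the generating pair to a non‑expansive $g_{a,b}:E\to\mathcal N_{d(a,b)}$, and the resulting product map $G$ is an isometry because its $(a,b)$‑component already realises $d(a,b)$. Since $E$ is injective, it is a retract of this finite product, so finite indecomposability gives $E\vartriangleleft\mathcal N_{d(a_0,b_0)}$ for some pair; the coretraction is then an isometric embedding of $E$ into $\mathcal N_{d(a_0,b_0)}$, and by Theorem \ref{thm:injective-proper} this forces $E\cong\mathcal N_{d(a_0,b_0)}$. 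Consequently the set $W:=\{\,d(a,b):a,b\in E,\ E\cong\mathcal N_{d(a,b)}\,\}$ is a nonempty finite subset of $\mathcal H$, and I pick $v$ minimal in $W$; note $v\neq 1$ since it is a distance between distinct points.

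It remains to see that $v$ is join‑irreducible. Suppose $v=v_1\vee v_2$ with $v_1,v_2<v$. The product $\mathcal N_{v_1}\times\mathcal N_{v_2}$ is injective (a retract of a power of $(\mathcal H,d_\mathcal H)$) and contains a two‑element subspace whose distance is $v_1\vee v_2=v$ in one direction and $\overline{v_1}\vee\overline{v_2}=\overline v$ in the other, hence isometric to $\{x,y\}$. By essentiality of the inclusion $\{x,y\}\hookrightarrow\mathcal N_v$, the non‑expansive extension $\mathcal N_v\to\mathcal N_{v_1}\times\mathcal N_{v_2}$ provided by injectivity of the product is an isometry, and injectivity of $\mathcal N_v$ makes $\mathcal N_v$ a retract: $\mathcal N_v\vartriangleleft\mathcal N_{v_1}\times\mathcal N_{v_2}$. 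Finite indecomposability of $E\cong\mathcal N_v$ then gives $E\vartriangleleft\mathcal N_{v_j}$ for some $j$, hence (Theorem \ref{thm:injective-proper} again) $E\cong\mathcal N_{v_j}$, so $v_j\in W$ and $v_j<v$, contradicting minimality of $v$. Therefore $v$ is join‑irreducible and $E\cong\mathcal N_v$ is the injective envelope of a two‑element space whose distance is join‑irreducible.

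I expect the delicate points to be two. First, one should \emph{not} hope that the value $v=d(a_0,b_0)$ produced by finite indecomposability is itself join‑irreducible: the real content is that Theorem \ref{thm:injective-proper} allows one to replace $E$ by $\mathcal N_{v'}$ with $v'$ strictly smaller whenever $v$ splits, and that finiteness of the distance set of $E$ forces this descent to stop at a join‑irreducible value — which is why I minimise over $W$ rather than reasoning about a fixed $v$. Second, every time a two‑element subspace of the right ``length'' is produced inside an injective space, the non‑expansive map furnished by injectivity must be upgraded to an isometric embedding; this rests on essentiality of $\{x,y\}\hookrightarrow\mathcal N_v$ (and, dually, on the minimality of metric forms), and it is the step where a careless argument would break down.
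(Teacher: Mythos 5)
This theorem is quoted from Theorem 3.8 of \cite{kabil-pouzet} and the paper contains no proof of it, so there is nothing internal to compare against; I can only assess your argument on its own terms. Your direction $(ii)\Rightarrow(i)$ is correct, and so is the first half of $(i)\Rightarrow(ii)$: the embedding of $E$ into the finite product $\prod_{(a,b)}\mathcal N_{d(a,b)}$ is sound, and the conclusion $E\cong\mathcal N_{d(a_0,b_0)}$ is legitimate because there the image of the coretraction is an injective subspace of $\mathcal N_{d(a_0,b_0)}$ \emph{containing a pair at distance $d(a_0,b_0)$}, hence an isometric copy of $\mathcal N_{d(a_0,b_0)}$, and a cardinality count (or Theorem \ref{thm:injective-proper}) finishes.

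The gap is in the very last step. From $E\cong\mathcal N_v\vartriangleleft\mathcal N_{v_j}$ you conclude $E\cong\mathcal N_{v_j}$ ``by Theorem \ref{thm:injective-proper} again''. But that theorem only excludes proper subspaces of $\mathcal N_{v_j}$ isometric to $\mathcal N_{v_j}$ \emph{itself}, whereas the image of your coretraction is only known to be isometric to $E\cong\mathcal N_v$; and, unlike in the first half, $v_j$ is merely a join-component of $v$ and is not known to be realised as a distance between two points of $E$, so you cannot manufacture a copy of $\mathcal N_{v_j}$ inside that image. The underlying inference --- a retract of $\mathcal N_u$ with at least two points must be all of $\mathcal N_u$ --- is false: for $F:=\uparrow\{aa,bb\}$ over $A:=\{a,b\}$ (Figure 1), the two-element subspace $\{A^{\ast},\uparrow\{a,b\}\}$ is its own injective envelope, hence an injective, proper retract of the six-element space $\mathcal S_F$. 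Consequently $v_j\in W$ does not follow, and the descent that is supposed to terminate at a join-irreducible value breaks precisely here. To repair it you would need a genuinely new ingredient, e.g.\ that the coretraction $E\to\mathcal N_{v_j}$ carries some pair of points of $E$ onto a pair generating $\mathcal N_{v_j}$, or that $\mathcal N_v\vartriangleleft\mathcal N_{v_j}$ together with $v_j\leq v$ forces $v_j=v$; neither is supplied, nor is it immediate from the facts recalled in this paper.
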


\subsection{Sum of metric spaces}\label{subsection:sum} 
%

Let $(E_{1},d_{1})$ and $(E_{2},d_{2})$ be two disjoint $\mathcal H $-metric spaces
and let $x_{1}\in E_{1}, x_{2}\in E_{2}$. If we endow the set $\{x_{1}, x_{2}\}$ with
a $\mathcal H $-distance $d^{\prime }$, then we can define a $\mathcal H $-distance $d$ on $%
E:=E_{1}\cup E_{2}$ as follows:

$\bullet$ If $x,y\in E_{i}$  with $i\in \left\{
1, 2\right\}$  then $d$ ($x,y$) = $d_{i}$ ($x,y$); 

$\bullet$ If $x\in E_{i}$, $y\in E_{j}$ with $i,j\in \left\{ 1, 2\right\} $ and $i\neq
j$,  then 
$d(x,y) = d_{i}(x,x_{i}) \cdot  d^{\prime }(x_{i},x_{j}) \cdot d_{j}(x_{j},y).$ In particular, we can identify $x_{1}$ and $x_{2}$ which amounts to set $%
d^{\prime }(x_{1},x_{2})=0$ in the above formula.

If $E_{1}$ and $E_{2}$ are not disjoint, we replace it by two disjoint
copies $E_{1}^{\prime }$, $E_{2}^{\prime }($eg $E_{i}^{\prime
}: =E_{i}\times \left\{ i\right\} )$. Identifying the corresponding elements $x_{1}^{\prime },x_{2}^{\prime
} $, we obtain a $\mathcal H $-metric space that we denote $%
(E_{1},d_{1})\cdot(E_{2},d_{2})$.  Alternatively, we may suppose that $E_{1}$ and $E_{2}$ have only one element in common, say $z_{1,2}$, and we define the distance $d$ on $E_{1}\cup E_{2}$ by setting $d(x,y):= d_i(x,z_{1, 2})\cdot d_j(z_{1,2},y)$ if $x\in E_i$, $y\in E_j$, $i\not =j$, and $d(x,y):=d_i(x,y)$ if $x,y\in E_i$. 

We consider now objects consisting of a $\mathcal H $-metric space and two
distinguished elements. Given two such objects, say $\underline E_{1}:=\left(
\left( E_{1},d_{1}\right), x_{1},y_{1}\right) $ and $\underline E_{2}:=$
$\left( \left( E_{2},d_{2}\right), x_{2},y_{2}\right), $ set
$\underline E_{1}\cdot \underline E_{2}:=\left( \left( E,d\right),x,y\right) $ where ($E,d$)
is the space obtained by taking
disjoint copies $(E_{1}^{\prime},d_{1}^{\prime})$ and $%
(E_{2}^{\prime},d_{2}^{\prime})$ of $(E_{1},d_{1})$ and $%
(E_{2},d_{2}),$respectively, and by identifying the corresponding elements $%
y_{1}^{\prime }$and $x_{2}^{\prime}$ and setting $x:=x_{1}^{\prime
} $ and $y:=y_{2}^{\prime}.$

\begin{definition}
Let $\mathcal H $ be Heyting. A pair $(v_1, v_2)\in \mathcal H ^2$ is \emph{summable} if  $\mathcal{S}%
_{v_{1}\cdot v_{2}}$  is isomorphic to  $\mathcal{S}_{v_{1}}\cdot \mathcal{S}_{v_{1}}$.
\end{definition}

\begin{definition}\label{decomposition property} Let $\mathcal H '$ be an initial segment of $\mathcal H $ which is also a submonoid. We say that  $\mathcal H '$ has the \textit{decomposition property}
if every pair $(v_1, v_2) \in \mathcal H ' \times \mathcal H '$ is summable.
\end{definition}

Several examples of metric spaces over a Heyting algebra are given in \cite {JaMiPo}.
We briefly examine some of these examples w.r.t. to their injective envelopes and the sum operation. 
Ordinary metric spaces enter in his frame. Add a largest element $+\infty$ to the set  $\R^{+}$ of non-negative reals,  extend the $+$ operation in the natural way, take the identity for the involution.  Then $\R^{+} \cup \{+\infty\}$ becomes a Heyting algebra and the metric spaces over it are just direct sums of ordinary metric spaces. The injective envelope of such a space is the direct sum of the injective envelope of its factors.  The injective envelope of a two-element metric spaces $E:=\{x,y\}$ with $r:= d(x,y)$ is isometric  to the segment $[0, r]$ si $r<+\infty$ and to $E$ if $r= +\infty$. Trivially, $\R^{+}$ has the decomposition property. The sum of two convex (resp. injective)  metric spaces with a common vertex is convex (resp. injective).  The reader will find in
\cite{Dr1} a description of injective envelopes of finite
ordinary metric spaces and interesting combinatorial properties as
well (see also \cite {Dr2}). \\

Now, let $\mathcal H := \{a, b, 0,1\}$ ordered by  $0< a,b<1$ and $a$ incomparable to $b$. The operation is the join, the involution exchange $a$ and $b$. Metric spaces over $\mathcal H $ correspond to ordered sets. As shown by Banaschewski and Bruns, every poset $P$  has an injective envelope, namely its MacNeille completion \cite{banaschewski-bruns}. Hence, if $P$ has   two elements, its injective envelope is  $P$ whenever  these two elements are comparable,  otherwise this is  $P$ augmented of a smallest and a largest element. Convexity property does not hold for $\mathcal H $, hence the sum of two injective with a common element does not need to be injective. 

Next,  suppose that $\mathcal H $ is a complete meet-distributive lattice, the operation is the join and the involution is the identity. For example, if  $\mathcal H :=\R \cup \{+\infty\}$,  metric spaces over $\mathcal H $ are direct sums of ultrametric spaces. Metric spaces over  Boolean algebras have been introduced  by Blumenthal \cite{blumenthal}. Let $B$ be a Boolean algebra, let the operation be the supremum and the involution be the identity.  Although $B$ is not necessary complete, residuation allows to define a distance on $B$ setting $d_B(p,q):= p\Delta q$ where $\Delta$ denotes the symmetric difference. From this follows that the interval $[0, u]:= \{v\in B: 0\leq v\leq u\}$ with the distance induced by $d_B$ is the injective envelope of every pair $\{p, q\}$ of vertices of  $B$ such that $p\wedge q= 0$ and $p\vee q= u$. Hence if $B$ is finite, the number of elements of the injective envelope of a $2$-element metric space is a power of $2$ hence the decomposition property does not hold. In Section \ref{section heyting},  we give an example for which this decomposition property holds, namely  the  algebra $\mathbf F^{\circ} \left( A^{\ast }\right)$(for more examples of generalisations of metric spaces, see \cite {blumenthal1}, \cite{blumenthal-menger}).

\section{The Heyting algebra $\underline{\mathbf F}\left( A^{\ast }\right) $}\label{section heyting}

Let $A$ be a set. Considering $A$ as an \textit{alphabet} whose
members are \textit{letters}, we write a word $\alpha $ with a
mere juxtaposition of its letters as $\alpha
=a_{0}a_{1}...a_{n-1}$ where $a_{i}$
are letters from $A$ for 0 $\leq i\leq i-1.$ The integer $n$ is the \textit{%
length} of the word $\alpha $ and we denote it $\left| \alpha \right| $.
Hence we identify letters with words of length 1. We denote by $\Box $ the
empty word, which is the unique  word of length zero. 
The \emph{concatenation} of two word $\alpha:= a_{0}\cdots a_{n-1}$ and $\beta:=b_{0}\cdots b_{m-1}$ is the word $\alpha \beta:=a_{0}\cdots a_{n-1}b_{0}\cdots b_{m-1}$. We denote by $A^{\ast }$ the set of all words on the alphabet $A$. Once equipped with the
concatenation of words, $A^{\ast }$ is a monoid, whose neutral element is the empty word, in fact $A^{\ast}$ is the \textit{free
monoid} on $A$.   A \emph{language} is any subset $X$ of $A^{\ast}$. We denote by $\powerset (A^{\ast})$ the set  of languages. We will use capital letters for languages. If $X, Y \in \powerset (A^{\ast})$ we may set  $XY:= \{\alpha\beta: \alpha\in X, \beta\in Y\}$ (and use  $Xy$ and $xY$instead of $X\{y\}$ and $\{x\}Y$).  With this  operation, which extends the concatenation operation on $A^{\ast}$, the set $\powerset (A^{\ast})$ is a monoid (the set $\{ \Box \}$ is the neutral element). Ordered by inclusion, this is a (join) lattice ordered monoid. Indeed, concatenation distributes over arbitrary union, namely:

\begin{center}
$( \underset{i\in I}{\bigcup }X_{i})Y=\underset{i\in I}{ \bigcup }X_{i} Y.$
\end{center}

This monoid is residuated. Let $X, Y, F \in \powerset(A^{\ast})$. As it is customary, we set $X^{-1}F:= \{y\in A^{*}: Xy\subseteq F\}$ and $FY^{-1}:= \{x\in A^{*}: xY\subseteq F\}$. We recall that 
\begin{equation}\label{eq:residuation}
X(X^{-1}F)\subseteq F\;  \text {and}\;  (FY^{-1})Y\subseteq F. 
\end{equation}

Set $\rho_F:= \{(x,y)\in A^*\times A^*\}$ and $R:=(A^*, \rho_F, A^*)$, then $R(x)= x^{-1}F$ and $R^{-1}(y)=Fy^{-1}$. Thus,  
 $\gal(R)$, the  Galois lattice of $R$,  is the set $\{FY^{-1}: Y\subseteq A^{\ast}\}$  ordered by inclusion. This is a complete lattice.  The meet is the intersection, the largest element is $A^{\ast}$.  

In the sequel, we study the metric structure of $\gal (R)$ when  $F$ is a final   segment  
of the monoid $A^{\ast}$, this monoid being  equipped with the Higman ordering.

We suppose from now that the alphabet $A$ is
ordered and equipped with an involution $^{-}$ preserving the order. The
involution extends to $A^{\ast }$: we set for every $\alpha:
=a_{0}\cdots a_{n-1}, \overline{\alpha }:=\overline{a_{n-1}}\cdots \overline{a_{0}}$. Note that $\overline{\alpha  \beta }=\overline{\beta
} \overline{\alpha }$ for all $\alpha,\beta \in A^{\ast }$. We
order  $A^{\ast }$ with the Higman ordering: if $\alpha $ and
$\beta $ are two elements in $A^{\ast }$ such $\alpha: =a_{0}\cdots 
a_{n-1}$ and $\beta: =b_{0}\cdots  b_{m-1}$ then $\alpha \leq \beta$
if there is an injective and increasing map $h$ from $\left\{
0,...,n-1\right\} $ to $\left\{ 0,...,m-1\right\}$ such that for each $i$,  $
0\leq i\leq n-1$, we have $a_{i}\leq b_{h\left( i\right) }$. Then 
$A^{\ast }$ becomes an  ordered monoid with respect to the concatenation
of words.  Let $\mathbf F\left( A^{\ast }\right) $ be the collection of
final segments of $\ A^{\ast }$ (that is $X\in \mathbf F\left( A^{\ast
}\right) $ if $X\subseteq A^{\ast } $ and $\alpha \leq
\beta,\alpha \in X$ implies $\beta \in X)$. The set $\mathbf F\left( A^{\ast }\right)$ is stable w.r.t. the concatenation of languages:  if $ X,Y\in \mathbf F\left(
A^{\ast }\right)$,  then $XY\in \mathbf F(A^{\ast})$.  Clearly,  the neutral element
 is $A^{\ast }$.  The set $\mathbf F\left( A^{\ast }\right) $ ordered by
inclusion is a complete lattice (the join is the union, the meet
is the intersection).  Concatenation distributes over union. 
Order $\mathbf F\left( A^{\ast }\right) $ by reverse of the
inclusion, denote  $X\leq Y$ instead of $X\supseteq Y$, 
extend the involution $^{-}$ to $\mathbf F( A^{\ast })$, set
$\overline{X}=\{\overline{\alpha }: \alpha \in X\}$,  denote by $X\cdot Y$  the concatenation $XY$ and set $1:= A^{\ast}$ then:

\begin{theorem}\label{fact:heyting}  The set $\underline{\mathbf F}\left( A^{\ast
}\right):=\left( \mathbf F\left( A^{\ast }\right),\leq,\cdot,1,^{-}\right) $ is a
Heyting algebra.
\end{theorem}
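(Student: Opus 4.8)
The plan is to verify directly that $\underline{\mathbf F}(A^{\ast})$ satisfies the axioms of a Heyting algebra as they were set up in the introduction: a complete lattice with an associative monoid operation, an order-reversing-operation involution, the neutral element $1$ equal to the least element of the order, and the distributivity law~(\ref{eq1}). Most of the lattice-theoretic and monoid-theoretic facts have already been recorded in the paragraph preceding the statement, so the work is to assemble them and to check the two points that genuinely use the structure of the Higman ordering together with reverse inclusion: that $\cdot$ is order-preserving, and that the infinitary distributivity~(\ref{eq1}) holds.

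First I would recall that $(\mathbf F(A^{\ast}),\subseteq)$ is a complete lattice with union as join and intersection as meet; passing to the reverse order $\leq$ (i.e. $X\leq Y$ iff $X\supseteq Y$) keeps it a complete lattice, now with intersection as join and union as meet, and makes $1:=A^{\ast}$ the least element. Associativity of $\cdot$ is inherited from the concatenation of languages in $\powerset(A^{\ast})$, and $A^{\ast}X=X=XA^{\ast}$ because $A^{\ast}$ is the neutral element of the concatenation monoid; stability, $X,Y\in\mathbf F(A^{\ast})\Rightarrow XY\in\mathbf F(A^{\ast})$, was already noted and follows from the fact that if $\alpha\beta\leq\gamma$ in the Higman order one can split a witnessing embedding to exhibit $\gamma$ as a concatenation of a word $\geq\alpha$ and a word $\geq\beta$. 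For the involution, $\overline{X}:=\{\overline\alpha:\alpha\in X\}$ is again a final segment because $\alpha\mapsto\overline\alpha$ is an order automorphism of $(A^{\ast},\leq)$ (the alphabet involution preserves $\leq_A$ and reversal preserves Higman embeddings); it is an involution of the lattice, and $\overline{X\cdot Y}=\overline Y\cdot\overline X$ comes from $\overline{\alpha\beta}=\overline\beta\,\overline\alpha$ pointwise. Isotonicity of $-$ with respect to $\leq$ is immediate since $X\supseteq Y$ gives $\overline X\supseteq\overline Y$.

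Next I would check that $\cdot$ is isotone for $\leq$: if $X\leq X'$ and $Y\leq Y'$, i.e. $X\supseteq X'$ and $Y\supseteq Y'$, then $XY\supseteq X'Y'$, i.e. $XY\leq X'Y'$; this is just monotonicity of concatenation under inclusion. Finally, for the distributivity law~(\ref{eq1}) I would translate it back into language terms. In the $\leq$-order, $\bigwedge$ is union, so~(\ref{eq1}) reads
\begin{equation*}
\bigl(\bigcup_{\alpha\in I}p_\alpha\bigr)\cdot q=\bigcup_{\alpha\in I}(p_\alpha\cdot q),
\end{equation*}
which is precisely the fact, already recorded in the text, that concatenation of languages distributes over arbitrary unions on the left (and it is displayed there for $\mathbf F(A^{\ast})$ as well, using that a union of final segments is a final segment). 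That disposes of~(\ref{eq1}).

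The only subtlety — and the point I expect to be the main obstacle to spell out cleanly — is the stability claim $XY\in\mathbf F(A^{\ast})$ (equivalently, that concatenation of final segments lands in final segments), because it is the one step where the combinatorics of the Higman embedding is actually used rather than a formal order/monoid manipulation. Concretely: given $\gamma\geq\alpha\beta$ with a strictly increasing map $h:\{0,\dots,|\alpha\beta|-1\}\to\{0,\dots,|\gamma|-1\}$ dominating letterwise, one cuts $\gamma$ at position $h(|\alpha|-1)+1$ (or at $h(|\alpha|)$, with the obvious adjustment when $\alpha$ or $\beta$ is empty), obtaining $\gamma=\gamma_1\gamma_2$ with $\gamma_1\geq\alpha$ and $\gamma_2\geq\beta$; hence $\gamma\in XY$ whenever $\alpha\in X\in\mathbf F(A^{\ast})$, $\beta\in Y\in\mathbf F(A^{\ast})$. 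Once this is in hand, all axioms are verified and the theorem follows. I would present the argument as a short sequence of verifications in the order: complete lattice, monoid with $1$ least, involution reversing $\cdot$ and isotone, $\cdot$ isotone, distributivity, concluding that $\underline{\mathbf F}(A^{\ast})$ is a Heyting algebra in the sense fixed in the introduction.
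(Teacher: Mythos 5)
Your proposal is correct and follows the same route the paper implicitly takes: the theorem is stated without a separate proof, all the needed ingredients (complete lattice structure, stability under concatenation, distributivity over unions, neutrality of $A^{\ast}$, the extended involution) having been recorded in the paragraph preceding the statement. You rightly single out the only step that is not a formal order/monoid manipulation, namely that $XY$ is again a final segment, and your splitting of a Higman embedding of $\alpha\beta$ into $\gamma$ at position $h(|\alpha|-1)+1$ is exactly the standard argument for it.
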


We may then define metric spaces over $\underline{\mathbf F}\left( A^{\ast
}\right)$ and study injective objects and particularly injective envelopes.

According to Corollary 4.9 p.177 of \cite{kabil-pouzet} we have:

 \begin{theorem}\label{thm:sum} 
Let $F$, $F_{1}$, $F_{2}$ be final segment of \ $A^{\ast }$ such that $ 
F=F_{1}\cdot F_{2}$.  If $F\neq \emptyset $ then

$$ F=F_{1}\cdot F_{2}\ \Longleftrightarrow \mathcal{ S}_{F}\cong \mathcal{S}_{F_{1}}\cdot
\mathcal{S}_{F_{2}}$$
\end{theorem}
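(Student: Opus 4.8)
The plan is to reduce the theorem to the general sum-decomposition result for injective envelopes over an arbitrary Heyting algebra, applied to the specific Heyting algebra $\underline{\mathbf F}(A^{\ast})$, and then to check that the hypothesis $F \neq \emptyset$ is exactly what makes the pair $(F_1, F_2)$ summable in the sense of the Definition above. So first I would recall the content of Corollary 4.9 of \cite{kabil-pouzet}: it asserts, for a general Heyting algebra $\mathcal H$ (or at least for an initial submonoid with the decomposition property), that $\mathcal S_{v_1 \cdot v_2} \cong \mathcal S_{v_1} \cdot \mathcal S_{v_2}$ whenever the pair $(v_1,v_2)$ is summable, and it is stated there in the form we need. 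The forward implication of the biconditional is trivial since $F = F_1 \cdot F_2$ is the hypothesis; the content is the equivalence's right-hand side, i.e. that $F = F_1 \cdot F_2 \neq \emptyset$ implies $\mathcal S_F \cong \mathcal S_{F_1} \cdot \mathcal S_{F_2}$, together with the (easy) converse that an isomorphism $\mathcal S_F \cong \mathcal S_{F_1}\cdot\mathcal S_{F_2}$ forces $F$ to factor as $F_1 \cdot F_2$ by reading off the distance between the two distinguished points of the sum.

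Next I would verify that in $\underline{\mathbf F}(A^{\ast})$ every pair $(F_1, F_2)$ of nonempty final segments is summable. The key structural fact is Equation~(\ref{eq1}), the distributivity law $\bigwedge\{p_\alpha \cdot q\} = (\bigwedge p_\alpha)\cdot q$, which for $\underline{\mathbf F}(A^{\ast})$ says that concatenation distributes over arbitrary unions of final segments on the left; the analogous right-distributivity also holds since concatenation of languages distributes over union on both sides. Using residuation in $\powerset(A^{\ast})$, recorded in Equation~(\ref{eq:residuation}), one shows that the residual maps $\beta \mapsto \lceil v \cdot \beta^{-1}\rceil$ behave well with respect to the factorization $v = v_1 \cdot v_2$: concretely, $\lceil (F_1 \cdot F_2)\cdot \beta^{-1}\rceil$ decomposes through $\lceil F_2 \cdot \beta^{-1}\rceil$, which is the algebraic shadow of the geometric statement that a minimal metric form on the two-point space with distance $F_1\cdot F_2$ restricts to minimal metric forms on each half. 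This is where the reflexivity/nonemptiness of $F$ enters: if $F$ were empty the ``gluing point'' $z_{1,2}$ at which $\mathcal S_{F_1}$ and $\mathcal S_{F_2}$ are identified would not be forced to sit on every geodesic, and the sum $\mathcal S_{F_1}\cdot\mathcal S_{F_2}$ could fail to be the injective envelope; with $F \neq \emptyset$ one has $1 = A^{\ast} \le F_1, F_2$ in the reversed order, i.e. both factors contain the empty word, so the distinguished points genuinely lie on a common geodesic and the disconnecting-vertex picture from the Introduction applies.

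Then I would assemble the proof: given $F = F_1 \cdot F_2 \neq \emptyset$, invoke summability of $(F_1, F_2)$ just established, and apply the general sum theorem (Corollary 4.9 of \cite{kabil-pouzet}) to conclude $\mathcal S_F \cong \mathcal S_{F_1}\cdot\mathcal S_{F_2}$. For the converse direction, suppose $\mathcal S_F \cong \mathcal S_{F_1}\cdot\mathcal S_{F_2}$; the distinguished points $x, y$ of $\mathcal S_F$ correspond to $x_1 \in \mathcal S_{F_1}$ and $y_2 \in \mathcal S_{F_2}$, and by the definition of the sum distance $d(x,y) = d_1(x_1, z_{1,2})\cdot d_2(z_{1,2}, y_2)$; since $d_i$ restricted to the distinguished points of $\mathcal S_{F_i}$ gives $F_i$, and since $z_{1,2}$ is the identified point $y_1 = x_2$, this reads $F = F_1 \cdot F_2$. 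I expect the main obstacle to be the careful verification of summability: one must show that the minimal-metric-form description of $\mathcal S_{F_1 \cdot F_2}$ (via $\mathcal S_v = \{\lceil v\cdot\beta^{-1}\rceil : \beta\in\mathcal H\}$) splits cleanly along the factorization, and that no minimal metric form on the glued space is ``lost'' or ``created'' — equivalently, that the natural map $\mathcal S_{F_1}\cdot\mathcal S_{F_2} \to \mathcal S_{F_1\cdot F_2}$ is a surjective isometry. This is the technical heart, and it is exactly the place where one uses both the distributivity law~(\ref{eq1}) and the fact that $1$ is the least element, which together force the residuals to compose correctly; the nonemptiness hypothesis on $F$ is what guarantees the relevant residuated elements are not the whole algebra and hence that the identification point is non-degenerate.
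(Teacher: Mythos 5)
There is a genuine gap, and it is structural. You propose to (1) cite a general result saying that $\mathcal S_{v_1\cdot v_2}\cong\mathcal S_{v_1}\cdot\mathcal S_{v_2}$ whenever $(v_1,v_2)$ is summable, and then (2) verify that every pair of nonempty final segments is summable. But in this paper ``summable'' is \emph{defined} to mean precisely that $\mathcal S_{v_1\cdot v_2}$ is isomorphic to $\mathcal S_{v_1}\cdot\mathcal S_{v_2}$, so step (1) is a tautology with no content, and step (2) is word-for-word the theorem you are supposed to prove. Your ``verification'' of summability is where the entire proof has to live, and what you offer there --- that the residual maps $\beta\mapsto\lceil v\cdot\beta^{-1}\rceil$ ``behave well'' with respect to the factorization, that this is the ``algebraic shadow'' of a geometric statement --- is a description of what a proof would have to establish, not an argument. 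You acknowledge this yourself by calling the surjective-isometry claim for the natural map $\mathcal S_{F_1}\cdot\mathcal S_{F_2}\to\mathcal S_{F_1\cdot F_2}$ ``the technical heart,'' but you never carry it out. For comparison, the paper does not reprove this statement at all: it imports it directly as Corollary~4.9 of \cite{kabil-pouzet}, where that corollary is already stated for final segments of $A^{\ast}$. A bare citation would therefore have been acceptable; what is not acceptable is dressing the citation up as a two-step reduction whose second step is the theorem itself.

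A secondary but real error concerns the role of the hypothesis $F\neq\emptyset$. You assert that $F\neq\emptyset$ forces ``both factors to contain the empty word.'' This is false: since $\Box$ is the least element of $A^{\ast}$ under the Higman ordering, a final segment containing $\Box$ must be all of $A^{\ast}$, so your claim would force $F_1=F_2=A^{\ast}$. What $F\neq\emptyset$ actually gives is that both $F_1$ and $F_2$ are nonempty (because $F_1F_2=\emptyset$ iff one factor is empty); the point is to exclude the absorbing element $\emptyset$, for which the two-point space $\{x,y\}$ with $d(x,y)=\emptyset$ is already injective and cannot decompose as a nontrivial sum, and for which the factorization $F=F_1\cdot F_2$ carries no information. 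The inequality $1=A^{\ast}\le F_i$ that you invoke holds for \emph{every} final segment and has nothing to do with nonemptiness. Your converse direction (reading off $F=F_1\cdot F_2$ from the sum distance between the distinguished endpoints) is fine, but it is the easy half.
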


According to Definition \ref{decomposition property}, this means that $\mathbf F^{\circ}\left( A^{\ast}\right):= \mathbf F\left( A^{\ast }\right)\setminus \{\emptyset \}$ has the decomposition property. 

Among metric spaces over $\mathbf F(A^{*})$ are those coming from reflexive and involutive transition sytems. They are introduced in the next section.

\section{Transition systems as metric spaces}\label{section:trans}

Let $A$ be a set. 
A \textit{transition system} on the alphabet $A$ is a pair $M:=(Q$,
$T)$ where $T\subseteq Q\times A\times Q.$ The elements of $Q$ are
called \textit{states} and those of $T$ \textit{transitions}. Let
$M:=\left(Q,T\right) $ and $M^{\prime }:=\left( Q^{\prime
},T^{\prime }\right) $ be two transition systems on the alphabet
$A$. A map $f:Q\longrightarrow Q^{\prime }$ is a 
\textit{morphism} of transition systems if for every transition
$(p,a,q)\in T$, we have $(f(p),a
, f\left( q\right)) \in T^{\prime }$. When $f$ is bijective and $%
f^{-1} $ is a morphism from $M^{\prime }$ to
$M$, we say that $f$ is an \textit{isomorphism}.
The collection of transition systems over $A$, equipped with these morphisms,
form a category. This category has products. If $\left(
M_{i}\right) _{i\in I}$ is a family of transition systems,
$M_{i}:=\left( Q_{i},T_{i}\right),$ then their product $M$ is the
transition system $\left( Q,T\right) $ where $Q$ is the direct product $\underset{i\in I%
}{\prod }Q_{i}$ and $T$ is defined as follows: if $x:=\left(
x_{i}\right) _{i\in I}$ and $y:=\left( y_{i}\right) _{i\in I}$ are
two elements of $Q$ and $a$ is a letter, then $\left(
x,a,y\right) \in T$ if and only if
$\left( x_{i},a,y_{i}\right) \in T$ for every $i\in I$.  

An \textit{
automaton} $\mathcal A$ on the alphabet $A$ is given by a transition system
$M:=\left( Q,T\right)$ and two subsets $I,$ $F$ of $Q$ called the set
of \textit{initial} and \textit{final states}. We denote the
automaton as a triple $\left( M,I,F\right)$.  A \textit{path} in
the automaton $\mathcal{A}:=\left( M,I,F\right)$ is a sequence
$c:=\left( e_{i}\right) _{i<n}$ of consecutive
transitions, that is  of transitions $e_{i}:=(q_{i},a _{i},q_{i\cdot1})$.
The  word $\alpha:=a_{0}\cdots a_{n-1}$ is the \textit{label} of the path, the 
state $q_{0}$ is its \textit{origin} and the state $q_{n}$ its \textit{end}.
One agrees to define for each state $q$ in $Q$ a unique null path
of length $0$ with origin and end $q$. Its label is the empty word
$\Box$. A path is \textit{successful} if its origin is in $I$ and
its end is in $F$. Finally,  a word $\alpha$ on the alphabet $A$ is
\textit{accepted} by the automaton
$\mathcal{A}$ if it is the label of some successful path. The \textit{%
language accepted} by the automaton $\mathcal{A}$,  denoted by
$L_{\mathcal{A}}$,  is the set of all words accepted by
$\mathcal{A}$. Let $\mathcal{A}:=\left( M,I,F\right) $ and
$\mathcal{A}^{\prime }:= \left( M^{\prime },I^{\prime },F^{\prime
}\right) $ be two automata. A {\it morphism} from $\mathcal{A}$ to $\mathcal{A}'$ is a map $f:Q\longrightarrow Q^{\prime }
$ satisfying the two conditions:
\begin{enumerate}
\item  $f$ is  morphism from $M$ to $M^{\prime }$; 
\item  $f$ $(I)$ $\subseteq I^{\prime }$ and $f(F)\subseteq F^{\prime}$. 
\end{enumerate}
If, moreover, $f$ is bijective, $f(I)=I^{\prime },f(F)=F^{\prime }$ and $%
f^{-1}$ is also a  morphism from $\mathcal{A}^{\prime }$ to $%
\mathcal{A}$, we say that $f$ is an  \textit{isomorphism} and that
the two automata $\mathcal{A}$ and $\mathcal{A}^{\prime }$ are \textit{%
isomorphic}.

To a metric space $\left( E,d\right) $ over
$\underline{F}(A^{\ast })$, we may associate the transition system
$M:=\left( E,T\right) $ having $E$ as   set of states and
$T:=\left\{ \left( x,a,y\right) :a\in d\left( x,y\right) \cap
A\right\} $ as   set of transitions. Notice that such a
transition system has the following properties: for all $x,y\in E$
and every $a,b\in A$ with $b\geq a$:\\
1) $\left( x,a,x\right) \in T$; \\
2) $\left( x,a,y\right) \in T$ implies $\left(
y,\overline{a},x\right) \in T$; \\
3) $\left( x,a,y\right) \in T$ implies  $\left( x,b,y\right) \in
T.$\\
We say that a transition system satisfying these properties is \textit{%
reflexive} and \textit{involutive} (cf. \cite{saidane, kabil-pouzet}). Clearly \ if $%
M:=\left( Q,T\right) $ is such a transition system, the map
$d_{M}:Q\times Q\longrightarrow \mathbf F(A^{\ast })$ where $d_{M}\left(
x,y\right) $ is the language accepted by the automaton $\left(
M,\left\{ x\right\},\left\{ y\right\} \right) $ is a distance. We
have the following:

\begin{lemma}\label{lem:metric-transition}
Let $\left( E,d\right)$ be a metric space over
$\underline{F}\left( A^{\ast }\right)$.  The following properties are
equivalent: 
\begin{enumerate}[(i)]
\item The map $d$ is of the form $d_{M}$ for some
reflexive and involutive transition system $M:=(E,T)$; 
\item  For all $\alpha,\beta \in A^{\ast }$ and  $x$,
$y$ $\in E$, if $\alpha \cdot\beta \in d\left( x,y\right)$, then there
is some $z\in E$ such that $\alpha \in d\left( x,z\right)$ and
$\beta \in d\left( z,y\right)$.\end{enumerate}
\end{lemma}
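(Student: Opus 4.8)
The plan is to prove the two implications separately. The implication $(i)\Rightarrow(ii)$ is the easy one: it amounts to splitting an accepting path in two. The implication $(ii)\Rightarrow(i)$ requires building a transition system out of $d$ and then checking, by an induction on word length, that it reproduces $d$.

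For $(i)\Rightarrow(ii)$, suppose $d=d_{M}$ for some reflexive and involutive transition system $M:=(E,T)$. If $\alpha\cdot\beta\in d(x,y)=d_{M}(x,y)$, then by definition of the accepted language there is a successful path in $(M,\{x\},\{y\})$ with label $\alpha\beta$. Let $z\in E$ be the state reached after reading the first $|\alpha|$ letters of this path. Then its initial segment is a path from $x$ to $z$ with label $\alpha$, so $\alpha\in d_{M}(x,z)=d(x,z)$, and its terminal segment is a path from $z$ to $y$ with label $\beta$, so $\beta\in d_{M}(z,y)=d(z,y)$ (if $\alpha=\Box$ take $z:=x$, if $\beta=\Box$ take $z:=y$). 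This gives $(ii)$.

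For $(ii)\Rightarrow(i)$, set $T:=\{(x,a,y)\colon a\in d(x,y)\cap A\}$ and $M:=(E,T)$. First, $M$ is reflexive and involutive: since $d(x,x)=1=A^{\ast}$, every triple $(x,a,x)$ lies in $T$; from $d(x,y)=\overline{d(y,x)}$ one gets $a\in d(x,y)\iff\overline{a}\in d(y,x)$, so $(x,a,y)\in T$ implies $(y,\overline{a},x)\in T$; and since $d(x,y)$ is a final segment of $A^{\ast}$, $b\ge a$ together with $(x,a,y)\in T$ forces $(x,b,y)\in T$. It then remains to show $d_{M}=d$. The inclusion $d_{M}(x,y)\subseteq d(x,y)$ comes from the triangular inequality: a path $x=q_{0},\dots,q_{n}=y$ with labels $a_{0},\dots,a_{n-1}$ satisfies $a_{i}\in d(q_{i},q_{i+1})$, and iterating axiom $(2)$ gives $d(x,y)\le d(q_{0},q_{1})\cdots d(q_{n-1},q_{n})$, i.e. $d(q_{0},q_{1})\cdots d(q_{n-1},q_{n})\subseteq d(x,y)$, which contains $a_{0}\cdots a_{n-1}$. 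For the reverse inclusion $d(x,y)\subseteq d_{M}(x,y)$ I argue by induction on $|\alpha|$ that $\alpha\in d(x,y)$ implies $\alpha$ labels a path from $x$ to $y$ in $M$: if $\alpha=\Box$, then since $\Box$ is the least element of $A^{\ast}$, $d(x,y)=A^{\ast}=1$, so $x=y$ and the null path works; if $\alpha=a_{0}\alpha''$ with $a_{0}\in A$, then by $(ii)$ there is $z\in E$ with $a_{0}\in d(x,z)$ and $\alpha''\in d(z,y)$, hence $(x,a_{0},z)\in T$, and by the induction hypothesis $\alpha''$ labels a path from $z$ to $y$, so prepending $(x,a_{0},z)$ yields a path from $x$ to $y$ with label $\alpha$. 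Thus $d_{M}=d$.

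The argument is essentially routine. The only load-bearing step is the induction in the last paragraph, where hypothesis $(ii)$ is precisely what is needed to peel off the first letter of a word of $d(x,y)$ while producing an intermediate state; the base case is handled by the first distance axiom together with the minimality of $\Box$ in $A^{\ast}$, which forces the two endpoints to coincide. All the verifications that $M$ is reflexive and involutive reduce directly to the distance axioms and to $d(x,y)$ being a final segment, so no extra work is required there.
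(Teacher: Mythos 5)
Your proof is correct and is exactly the standard argument: the paper states this lemma without proof (deferring to the cited references \cite{saidane, kabil-pouzet}), and what you wrote -- splitting an accepting path for $(i)\Rightarrow(ii)$, and for $(ii)\Rightarrow(i)$ building $T:=\{(x,a,y): a\in d(x,y)\cap A\}$, getting $d_M\subseteq d$ from the iterated triangle inequality and $d\subseteq d_M$ by peeling off the first letter via $(ii)$ -- is the intended proof. The edge cases (the empty word forcing $x=y$, reflexivity from $d(x,x)=A^{*}$, up-closure from $d(x,y)$ being a final segment) are all handled correctly.
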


The category of reflexive and involutive transition
systems with the morphisms defined above identify  to a subcategory of the
category having as objects the metric spaces and morphisms the contractions. Indeed:

\begin{fact}\label{fact:morphism}  Let $M_{i}:=\left( Q_{i},T_{i}\right) \left( i=1,2\right) $
be two reflexive and involutive transition systems. A map $%
f:Q_{1}\longrightarrow Q_{2}$ is a morphism from $M_{1}$ to $M_{2}$ if only
if $f$ is a contraction  from $(Q_{1},d_{M_{1}})$ to $%
(Q_{2},d_{M_{2}}).$
\end{fact}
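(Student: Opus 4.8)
The plan is to prove both implications of Fact \ref{fact:morphism} by unwinding the definitions of morphism and contraction, and using the description of $d_{M_i}$ as the language accepted by the single-source single-target automaton $(M_i,\{x\},\{y\})$. Recall that for a reflexive and involutive transition system $M:=(Q,T)$, a word $\alpha:=a_0\cdots a_{n-1}$ belongs to $d_M(x,y)$ precisely when there is a path from $x$ to $y$ in $M$ whose label is $\alpha$, i.e.\ a sequence $x=q_0,q_1,\dots,q_n=y$ of states with $(q_i,a_i,q_{i+1})\in T$ for each $i<n$. The key observation is that morphisms of transition systems send paths to paths with the same label: if $f:Q_1\to Q_2$ is a morphism and $x=q_0,\dots,q_n=y$ witnesses $\alpha\in d_{M_1}(x,y)$, then $f(x)=f(q_0),\dots,f(q_n)=f(y)$ witnesses $\alpha\in d_{M_2}(f(x),f(y))$, because $(q_i,a_i,q_{i+1})\in T_1$ forces $(f(q_i),a_i,f(q_{i+1}))\in T_2$.

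First I would prove the forward direction. Suppose $f$ is a morphism from $M_1$ to $M_2$. Fix $x,y\in Q_1$ and any $\alpha\in d_{M_1}(x,y)$; by the path-lifting observation above, $\alpha\in d_{M_2}(f(x),f(y))$. Hence $d_{M_1}(x,y)\subseteq d_{M_2}(f(x),f(y))$, which in the ordering of $\underline{\mathbf F}(A^*)$ (reverse inclusion) reads $d_{M_2}(f(x),f(y))\leq d_{M_1}(x,y)$; that is exactly the contraction (non-expansiveness) inequality $d_{M_2}(f(x),f(y))\leq d_{M_1}(x,y)$ for all $x,y\in Q_1$.

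For the converse, suppose $f$ is a contraction from $(Q_1,d_{M_1})$ to $(Q_2,d_{M_2})$, i.e.\ $d_{M_2}(f(x),f(y))\subseteq d_{M_1}(x,y)$ for all $x,y$. I need to show that every transition $(p,a,q)\in T_1$ yields $(f(p),a,f(q))\in T_2$. Since $M_1$ is reflexive and involutive, $(p,a,q)\in T_1$ is equivalent to $a\in d_{M_1}(p,q)\cap A$ — indeed the length-one path $p,q$ shows $a\in d_{M_1}(p,q)$, and conversely any word of length one in $d_{M_1}(p,q)$ arises, after using property~(3) of reflexive systems to pass from a letter $b\geq a$ down to $a$ along a single transition, from a transition; more simply, $d_{M_1}(p,q)\cap A=\{a\in A:(p,a,q)\in T_1\}$ directly from the definition of acceptance for words of length one. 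Applying the contraction hypothesis, $a\in d_{M_1}(p,q)$ gives $a\in d_{M_2}(f(p),f(q))$, hence $a\in d_{M_2}(f(p),f(q))\cap A$, which by the same identity for $M_2$ means $(f(p),a,f(q))\in T_2$. Thus $f$ is a morphism from $M_1$ to $M_2$.

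The only point requiring a little care — and the one I'd expect a careful reader to scrutinize — is the identification $d_M(x,y)\cap A=\{a\in A:(x,a,y)\in T\}$ for a reflexive involutive $M$: the inclusion $\supseteq$ is the length-one path, while $\subseteq$ needs that a successful path of label a single letter $a$ is literally a single transition $(x,a,y)$ (a path of length one, since a label of length one cannot come from a longer path, as each transition contributes exactly one letter, and the null path has label $\Box\neq a$). Once this identification is in hand for both $M_1$ and $M_2$, together with the path-lifting observation, both implications are immediate; no use of reflexivity or involutivity beyond guaranteeing that $d_{M_i}$ is a genuine $\underline{\mathbf F}(A^*)$-distance is actually needed for the equivalence itself.
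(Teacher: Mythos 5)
Your proof is correct, and in fact the paper states Fact \ref{fact:morphism} without any proof (it appears right after the word ``Indeed:''), so there is nothing to compare against; your argument --- morphisms carry paths to paths with the same label for one direction, and the identification $d_M(p,q)\cap A=\{a\in A:(p,a,q)\in T\}$ for the other --- is exactly the expected one. One small slip: at the start of your converse you restate the contraction hypothesis as $d_{M_2}(f(x),f(y))\subseteq d_{M_1}(x,y)$, which contradicts your own (correct) translation in the forward direction, where $d_{M_2}(f(x),f(y))\leq d_{M_1}(x,y)$ under reverse inclusion means $d_{M_1}(x,y)\subseteq d_{M_2}(f(x),f(y))$; fortunately the inclusion you actually \emph{use} two lines later ($a\in d_{M_1}(p,q)$ implies $a\in d_{M_2}(f(p),f(q))$) is the correct one, so the argument goes through. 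The parenthetical appeal to property (3) of reflexive systems is a red herring (that property moves a letter \emph{up}, not down) and can be deleted in favour of your ``more simply'' observation that a successful path whose label has length one is a single transition; your closing remark that reflexivity and involutivity play no role in the equivalence itself is accurate.
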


From this fact, we can observe that if $\left( M_{i}\right) _{i\in I}$ is a
family of transition systems $M_{i}:=\left( Q_{i},T_{i}\right) $ then  the metric
space ($Q,$ $d)$ associated to the transition system $\underset{i\in I}{%
\prod }M_{i}$, product of the $M_{i}$'s, is the product of metric spaces ($%
Q_{i},$ $d_{i})$ associated to the transition systems $\left(
Q_{i},T_{i}\right)$.

Injective objects satisfy the convexity property stated in $(ii)$ of Lemma \ref{lem:metric-transition}. Hence, if $F$ is a final segment of $A^{\ast }$, the distance $d$ on the injective envelope  $\mathcal {S}_F$ coincide with the distance $d_F$ associated with the transition system 
$M_{F}:=\left( S_{F},T_{F}\right)$ where $$T_{F}:=\left\{ \left(
p,a,q\right) :a \in d\left( p,q\right) \cap
A\right\}. $$  We   denote by $\mathcal{A}_{F}$ the automaton $\left(
M_{F},\left\{ x\right\},\left\{ y\right\} \right)$, where $x:= A^{*}$ and $y:= F$.

From the existence of the injective envelope, we get:

\begin{theorem}\label{envelopeF}
For every $F\in \mathbf F(A^{\ast})$ there is an involutive and reflexive  transition system $M:=(Q,T)$, an initial state $x$ and a
final state $y$ such that the language accepted by the automaton
$\mathcal{A}=(M,\{x\},\{y\})$ is $F.$ Moreover, if $A$ is
well-quasi-ordered then we may choose $Q$ to be finite.
\end{theorem}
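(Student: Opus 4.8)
The plan is to produce the transition system as the one naturally attached to the injective envelope of a two-element metric space, and then to invoke Higman's theorem to get finiteness. First I would recall that by the results summarized in Section \ref{section metric}, every $\underline{\mathbf F}(A^{\ast})$-metric space has an injective envelope; in particular the two-element metric space $E:=\{x,y\}$ with $d(x,y)=F$ has one, denoted $\mathcal{S}_F$ (equivalently $\mathcal{N}_F$ in the notation of the excerpt). Since injective objects over $\underline{\mathbf F}(A^{\ast})$ satisfy the convexity property $(ii)$ of Lemma \ref{lem:metric-transition} — as already noted in the paragraph preceding the statement — the distance on $\mathcal{S}_F$ is of the form $d_{M}$ for the reflexive and involutive transition system $M:=M_F=(S_F,T_F)$ with $T_F:=\{(p,a,q): a\in d(p,q)\cap A\}$. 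Taking the initial state $x:=A^{\ast}$ and the final state $y:=F$, the language accepted by $\mathcal{A}=(M_F,\{x\},\{y\})$ is exactly the set of words $\alpha=a_0\cdots a_{n-1}$ that are labels of successful paths; by the convexity property and an easy induction on $|\alpha|$, this set is precisely $d_{M_F}(x,y)=d(x,y)=F$. This settles the first assertion.

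For the finiteness claim under the hypothesis that $A$ is well-quasi-ordered, I would argue as follows. By Proposition \ref{prop:finiteness}, it suffices to know that the injective envelope of every two-element metric space over $\underline{\mathbf F}(A^{\ast})$ is finite, and that is exactly the envelope $\mathcal{S}_F$ we are bounding. By Higman's theorem \cite{higman}, if $A$ is w.q.o. then $A^{\ast}$ with the Higman ordering is w.q.o.; consequently the lattice $\mathbf F(A^{\ast})$ of final segments, ordered by reverse inclusion, is well-founded, and moreover each final segment $F$ is generated by a finite antichain of words $u_0,\dots,u_{k-1}$. Thus $F$ has the concrete shape described in the Introduction preceding Theorem \ref{injenv}, and by that theorem $\mathcal{S}_F$ embeds (as a lattice) as an intersection-closed subset of the finite lattice $\mathbf F(\mathbf{n}_0\otimes\cdots\otimes\mathbf{n}_{k-1})$, hence is finite. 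Taking $Q:=S_F$ then gives a finite reflexive involutive transition system with the required property.

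The main obstacle — and the only point requiring genuine care — is the passage from "the values $d_{M_F}(p,q)$ are final segments and $M_F$ is reflexive and involutive" to "$d_{M_F}$ really is the distance of $\mathcal{S}_F$ as an $\underline{\mathbf F}(A^{\ast})$-metric space, so that the accepted language is $F$ and not merely a superset or subset of it." This is handled by Lemma \ref{lem:metric-transition}: the convexity property $(ii)$, which holds on any injective object, is exactly the factorization condition $\alpha\cdot\beta\in d(x,y)\Rightarrow \exists z\ (\alpha\in d(x,z)\ \text{and}\ \beta\in d(z,y))$, and this is what lets one reconstruct $d$ from the transition relation $T_F$. Once this identification is in place, everything else is either a direct appeal to the existence of injective envelopes (Section \ref{section metric}), to Higman's theorem, or to Theorem \ref{injenv}; no further computation is needed.

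Alternatively, one can bypass Theorem \ref{injenv} entirely for the finiteness part: since $F$ is generated by finitely many words, the Galois lattice $\gal(R)=\{FY^{-1}:Y\subseteq A^{\ast}\}$ of the incidence structure $R:=(A^{\ast},\rho_F,A^{\ast})$ has, for each $v\in A^{\ast}$, only finitely many distinct residuals $Fv^{-1}$ (each being a final segment of the w.q.o. set $A^{\ast}$, determined by its finite generating antichain, of which there are finitely many lying below a finite bound), and hence $\gal(R)$ is finite; as noted in the Introduction this Galois lattice, with the induced metric, is the injective envelope $\mathcal{S}_F$. Either route yields the theorem.
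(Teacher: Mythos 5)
Your proposal is correct and follows the paper's own route: take $M:=M_F$ with $x:=A^{\ast}$, $y:=F$ (justified, as you do, by the convexity property of injective objects via Lemma \ref{lem:metric-transition}), and for finiteness apply Higman's theorem to get a finite basis for $F$ and then conclude via Theorem \ref{injenv} / Proposition \ref{prop:finitenesswords}. You merely make explicit a step the paper leaves implicit (the passage from ``$F$ has a finite basis'' to ``$\mathcal{S}_F$ is finite''), and your alternative Galois-lattice argument is a harmless variant of the same idea.
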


\begin{proof} Take $M:= M_F$,  $x:= A^{*}$ and $y:= F$. If  $A$ is well-quasi-ordered then $A^*$ is also well-quasi-ordered  (Higman\cite{higman}), hence the final
segment $F$ has a finite basis, that is, there are finitely many
words $\alpha _{0},...,\alpha _{n-1}$ such that $F=\{\alpha
:\alpha _{i}\leq \alpha\; \mbox{for some}\; i<n\}$. 
\end{proof}

Since injective objects in the category of metric spaces satisfy the convexity property stated in $(ii)$ of Lemma \ref{lem:metric-transition}, their distance is the  distance associated with a transition system. Thus we may reproduce Theorem \ref {thm:main2} almost verbatim. We get:

\begin{theorem}\label{thm:main3}
Let $M:= (Q, T)$ be a transition system, $X\subseteq Q$.  
Then $(M, d_M)$ is isomorphic to the injective envelope of $(X, d_{M\restriction X})$  iff for every reflexive and involutive transition system  $M^{\prime}:= (Q', T')$, $X'\subseteq Q'$, every isometry $f: (X, d_{M\restriction X})$ onto $(X', d_{M'\restriction X'})$:

\begin{enumerate}[{(i)}]
\item Every non-expansive map    $\overline f: (M, d_M)\rightarrow (M',d_{M'})$, if any,  which extends $f$ is  an isometric embedding of $(M, d_M)$ into $(M',d_{M'})$; 

\item The isometry $f^{-1}$ extends to a non-expansive map from  $(M', d_{M'})$ into $(M,d_M)$. \end{enumerate}\end{theorem}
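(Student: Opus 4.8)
The plan is to obtain Theorem~\ref{thm:main3} as a more or less immediate transcription of Theorem~\ref{thm:main2}, using the dictionary between reflexive and involutive transition systems and metric spaces over $\underline{\mathbf F}(A^{\ast})$ that has been set up in this section. The two facts that make this transcription legitimate are, first, that by Fact~\ref{fact:morphism} the morphisms of reflexive and involutive transition systems are exactly the contractions (non-expansive maps) between the associated metric spaces, so the categorical notions of isometry, isometric embedding and non-expansive extension have the same meaning on both sides; and second, that by Lemma~\ref{lem:metric-transition} together with the remark that injective metric spaces over $\underline{\mathbf F}(A^{\ast})$ satisfy the convexity property $(ii)$ of that lemma, every injective metric space over $\underline{\mathbf F}(A^{\ast})$ is of the form $(Q, d_M)$ for some reflexive and involutive transition system $M=(Q,T)$. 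In particular the injective envelope $\mathcal N((X, d_{M\restriction X}))$, being injective, is realised by a reflexive and involutive transition system, so quantifying over such systems $M'$ in Theorem~\ref{thm:main3} loses nothing relative to quantifying over all metric spaces in Theorem~\ref{thm:main2}.

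First I would spell out the forward implication: assume $(M, d_M)$ is isomorphic to the injective envelope of $(X, d_{M\restriction X})$. Then $(Q, d_M)$ is isomorphic to the injective envelope of $(X, d_{M\restriction X})$ as a metric space over $\underline{\mathbf F}(A^{\ast})$, so Theorem~\ref{thm:main2} applies and gives $(i)$ and $(ii)$ for every metric space $(E', d')$ with $X' \subseteq E'$ and every isometry $f$; specialising to the metric spaces $(Q', d_{M'})$ coming from reflexive and involutive transition systems $M'$ yields exactly the statement of Theorem~\ref{thm:main3}. Conversely, suppose $(i)$ and $(ii)$ hold in the transition-system formulation. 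To invoke the converse direction of Theorem~\ref{thm:main2} I need $(i)$ and $(ii)$ for \emph{all} metric spaces $(E', d')$, not only those arising from transition systems; but the proof of the converse direction of Theorem~\ref{thm:main2} only ever tests $(i)$ and $(ii)$ against one particular $(E', d')$, namely the injective envelope of $(X, d_{M\restriction X})$ with $f$ the identity, and that envelope is of the form $(Q', d_{M'})$ for a reflexive and involutive $M'$. Hence the hypotheses of Theorem~\ref{thm:main3} already supply everything the proof of Theorem~\ref{thm:main2} consumes, and we conclude that $(M, d_M)$ is the injective envelope of $(X, d_{M\restriction X})$.

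The only genuine point requiring care — and the step I expect to be the main obstacle — is justifying that one may restrict the quantifier ``for every metric space $(E', d')$'' to ``for every reflexive and involutive transition system $M'$'' without weakening the statement. For the forward implication this is trivial (a restriction of a universally quantified hypothesis is weaker). For the converse it is exactly the observation above: the witness $(E', d')$ used in the proof of Theorem~\ref{thm:main2} is an injective space, hence convex in the sense of $(ii)$ of Lemma~\ref{lem:metric-transition}, hence of the form $(Q', d_{M'})$; and $f^{-1}$, $\overline f$, $f \circ g$ are morphisms of transition systems precisely when they are contractions, by Fact~\ref{fact:morphism}. I would therefore write the proof as: ``The forward direction is immediate from Theorem~\ref{thm:main2}. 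For the converse, the only $(E', d')$ against which the proof of Theorem~\ref{thm:main2} tests $(i)$ and $(ii)$ is the injective envelope of $(X, d_{M\restriction X})$, which by Lemma~\ref{lem:metric-transition} and the convexity of injective spaces is realised by a reflexive and involutive transition system $M'$; by Fact~\ref{fact:morphism} the maps occurring in that proof are morphisms of transition systems exactly when they are contractions. Thus the hypotheses $(i)$, $(ii)$ of the present statement suffice, and the conclusion follows as in Theorem~\ref{thm:main2}.'' No further computation is needed; everything reduces to the two structural facts already established in this section.
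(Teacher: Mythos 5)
Your proposal is correct and follows exactly the paper's route: the paper justifies Theorem~\ref{thm:main3} by the single observation that injective metric spaces over $\underline{\mathbf F}(A^{\ast})$ satisfy the convexity property $(ii)$ of Lemma~\ref{lem:metric-transition}, hence are realised by reflexive and involutive transition systems, so that Theorem~\ref{thm:main2} can be reproduced almost verbatim. You merely spell out in more detail why restricting the quantifier to such transition systems is harmless in each direction, which is a faithful elaboration of the paper's argument rather than a different one.
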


Taking for $X$ a $2$-element subset  $\{x,y\}$ of $Q$, Theorem \ref {thm:main2} translates to Theorem \ref{thm:main1} stated in the introduction. 

\subsection{Minimal automaton and Minmax automata}
We suppose now that the alphabet $A$ is finite. 
We refer to \cite{sakarovitch} Subsection 3.3 p. 111-118 for the construction of the minimal state deterministic automaton.
If  $F\in \mathbf F(A^{\ast})$ and $u\in A^{\ast}$,  the \emph{left residual} is $u^{-1}F:= \{v\in A^{\ast}: uv\in F\}$.   The minimal automaton $Min_F$ recognizing $F$ has $Q_F:= \{u^{-1}F: u\in A^{\ast}\}$ as set of states. Its initial state is  $x:= F$,  its final state is $y:= A^{\ast}$ and  the transition function $\delta_F$  associate to the pair $(u^{-1}F, a)\in Q_F \times A$ the state $(u a)^{-1}F$. 

One can check that:

\begin{lemma} The map $i: Min_F\rightarrow \mathcal A_F$ defined by $i(Y):= \underset{y\in Y}{\bigcap} Fy^{-1}$  is a morphism of automata. 
\end{lemma}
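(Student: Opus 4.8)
The plan is to unwind the definitions on both sides and check that $i$ sends transitions of $Min_F$ to transitions of $\mathcal A_F$, respects the initial state, and respects the final state. Recall that the states of $Min_F$ are the left residuals $u^{-1}F$ for $u\in A^{\ast}$, while the states of $\mathcal A_F$ are the elements of the Galois lattice $\gal(R)$, i.e. the sets of the form $FY^{-1}=\bigcap_{y\in Y}Fy^{-1}$ for $Y\subseteq A^{\ast}$. Thus for a state $Y=u^{-1}F$ of $Min_F$, the value $i(Y)=\bigcap_{y\in Y}Fy^{-1}$ is indeed an element of $\gal(R)$, so the map is well defined into the state set of $\mathcal A_F$.

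First I would check the initial and final states. The initial state of $Min_F$ is $F=\Box^{-1}F$; applying $i$ gives $\bigcap_{y\in F}Fy^{-1}$, and I must verify this equals $x:=A^{\ast}$, the initial state of $\mathcal A_F$. This is the statement that $Fy^{-1}=A^{\ast}$ for every $y\in F$, which holds because $F$ is a final segment: if $y\in F$ then $xy\geq y$ for the Higman order forces $xy\in F$ for all $x$, so $Fy^{-1}=\{x:xy\in F\}=A^{\ast}$. For the final state, the final state of $Min_F$ is $A^{\ast}=y^{-1}F$ for any $y\notin$ ... more carefully, it is the residual $u^{-1}F$ that equals $A^{\ast}$, realized by any $u$ with $u\in F$ already consumed; applying $i$ to the state $A^{\ast}$ gives $\bigcap_{y\in A^{\ast}}Fy^{-1}=F\Box^{-1}$ intersected down, which should reduce to $F$ itself, the final state $y$ of $\mathcal A_F$. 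Here one uses that $Fy^{-1}\supseteq F\cdot(\text{something})$ and in particular $\bigcap_{y\in A^{\ast}}Fy^{-1}=\{x:xy\in F\text{ for all }y\}$; since $\Box\in A^{\ast}$ this is contained in $F$, and conversely if $x\in F$ then $xy\geq x$ gives $xy\in F$, so the intersection is exactly $F$.

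The heart of the argument is the transition condition. A transition of $Min_F$ has the form $(u^{-1}F,\,a,\,(ua)^{-1}F)$ for $a\in A$. I need $(i(u^{-1}F),\,a,\,i((ua)^{-1}F))$ to be a transition of $\mathcal A_F$, which by definition of $T_F$ (and of $d$ on $\mathcal S_F$, which here coincides with the distance $d_{\mathcal H}$ on the Galois lattice) means $a\in d(i(u^{-1}F),\,i((ua)^{-1}F))\cap A$, equivalently $a$ witnesses the residuation inequalities between the two Galois-closed sets. Concretely, writing $P:=i(u^{-1}F)$ and $Q:=i((ua)^{-1}F)$ as final segments, the transition relation amounts to $Q\leq P\cdot\{a\}$ and $P\leq Q\cdot\{\overline a\}$ in $\underline{\mathbf F}(A^{\ast})$, i.e. $P\{a\}\subseteq Q$ and $Q\{\overline a\}\subseteq P$. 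So I would show: if $w\in P=\bigcap_{y\in u^{-1}F}Fy^{-1}$, then $wa\in Fz^{-1}$ for every $z\in(ua)^{-1}F$, i.e. $waz\in F$; but $z\in(ua)^{-1}F$ means $uaz\in F$, i.e. $az\in u^{-1}F$, hence $w\in F(az)^{-1}$, i.e. $w(az)\in F$ — which is exactly $waz\in F$. The reverse inclusion $Q\{\overline a\}\subseteq P$ follows symmetrically using the involution and the identity $\overline{Fy^{-1}}=\overline y^{-1}\overline F$, or more directly by checking $v\overline a\in Fy^{-1}$ for $v\in Q$, $y\in u^{-1}F$.

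The main obstacle I anticipate is bookkeeping the translation between the three equivalent presentations of $\mathcal S_F$ (minimal metric forms, the lattice $\mathcal S_v$ of left residuals $\lceil v\cdot\beta^{-1}\rceil$, and the Galois lattice $\gal(R)=\{FY^{-1}\}$), and making sure that "transition in $\mathcal A_F$" is being tested in the right one — namely that in the Galois-lattice picture a transition $(P,a,Q)$ is precisely the pair of residuation inequalities above, with $a$ playing the role of the letter in both the forward and the involuted direction. Once that dictionary is fixed, the verifications are the short set-theoretic manipulations sketched above, using only that $F$ is a final segment (to get the boundary states) and the associativity of concatenation together with the definition of left residuals (to get the transitions). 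No appeal to finiteness of $A$ or to well-quasi-order is needed for this lemma, only for the later statement that $Min_F$ is finite.
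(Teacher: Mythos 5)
Your overall route is the same as the paper's: identify the states of $\mathcal A_F$ with the Galois lattice, translate ``$(P,a,Q)$ is a transition of $\mathcal A_F$'' into the two inclusions $Pa\subseteq Q$ and $Q\overline a\subseteq P$, verify these for $P:=i(u^{-1}F)$ and $Q:=i((ua)^{-1}F)$, and check the initial and final states (which you in fact do more carefully than the paper, which declares them obvious). Your verification of $Pa\subseteq Q$ is exactly the paper's argument.

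The one weak point is the second inclusion $Q\overline a\subseteq P$. It is not ``symmetric'' to the first: the first is a purely formal consequence of $uaw\in F\Leftrightarrow aw\in u^{-1}F$, but for the second you must show that $v\in\bigcap\{Fz^{-1}:uaz\in F\}$ together with $uy\in F$ implies $v\overline a y\in F$, and the only way to feed $\overline a y$ into the hypothesis on $v$ is to know that $ua\overline a y\in F$. That holds because $uy$ is a subword of $ua\overline a y$ and $F$ is a final segment for the Higman ordering --- which is exactly the point the paper makes when it says inclusion 2 ``follows directly from the fact that $F$ is a final segment.'' Consequently your closing claim that the transition check needs ``only associativity of concatenation together with the definition of left residuals'' is not right: for a general language $L$ in place of $F$ the inclusion $Q\overline a\subseteq P$ can fail (e.g.\ $(ua)^{-1}L$ may be empty, making $Q=A^{*}$ while $P\neq A^{*}$). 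The fix is one line, but it is precisely the place where the hypothesis that $F$ is a final segment enters the transition argument, so it should be stated rather than absorbed into a claimed symmetry.
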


\begin{proof}
Suppose that $(u^{-1}F,a,(ua)^{-1}F)$ is a transtion in  the automaton $Min_F$. We prove that $(i(u^{-1}F),a,i((ua)^{-1}F))$ is a transition in the automaton  $\mathcal A_F$. This is equivalent to $a\in d(i(u^{-1}F),i((ua)^{-1}F)$. The last condition amounts to  1) $ i(u^{-1}F)a \subseteq i((ua)^{-1}F)$ and 2) $ i((ua)^{-1}F \overline{a} \subseteq i(u^{-1}F)$. For 1), let $v\in i(u^{-1}F)$, we claim that $va\in i((ua)^{-1}F)$, that is for each word $w$, if $uaw\in F$, then $vaw\in F$. Since $v\in i(u^{-1}F)$, from $uaw\in F$, we have $vaw\in F$, as required. The inclusion 2) follows directly from the fact that $F$ is a final segment.\\
The  equalities    $i(F) =  A^{*}$ and $i(A^{*})=F$   are obvious.      
\end{proof}

In general, the transition system associated to $Min_F$ is neither reflexive nor involutive. 
Among all
involutive and reflexive transition systems $M:= (Q, T)$ with 
initial state $x$ and final state $y$ such that the language
accepted by the automaton $( M,\{ x\},\{ y\}) $ is $F$, we select
those with the least number of states and among those transition systems, we select
those with a maximum number of transitions; we call 
\textit{minmax transition systems} these transition systems. The automaton corresponding to
a minmax transition system  is
 a  \textit{minmax
automaton}. In other terms, an automaton
$\mathcal{A}:=(M,\{x\},\{y\})$ is minmax if $(a)$ it is reflexive and involutive,  $(b)$ for every reflexive
and involutive automaton $\mathcal{A^{\prime }}= ( M^{\prime },\{
x^{\prime }\},\{ y^{\prime }\}) $ such that $d_{M}( x,y)
=d_{M^{\prime }}( x^{\prime },y^{\prime }),$ one has $\left|
Q\right|   \leq \left| Q^{\prime }\right| $, moreover $(c)$ 
$\left| Q\right| =\left| Q^{\prime }\right| $ implies
$\left| T\right| \geq \left| T^{\prime }\right|$.
\begin{proposition}\label{prop:minmax} Let $M:=(Q,T)$ be transition system and $F$ be
a final segment of $A^{\ast }$. If $\mathcal{A}:= \left( M,\left\{
x\right\},\left\{ y\right\} \right) $ is a minmax automaton which
accepts $F$, then $\mathcal{A}$ is isomorphic to an induced
automaton $\mathcal{A}^{\prime}$ of $\mathcal{A}_{F},$ that is
there is some $Q'\subseteq \mathcal{S}_{F}$ containing $A^{*}$ and $F$ such that
the automaton $\mathcal{A}^{\prime}:=({M_{F}}_{\restriction Q'}, \{A^*\}, \{F\})$ is isomorphic to $\mathcal{A}$.
\end{proposition}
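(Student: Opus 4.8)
The plan is to combine the characterization of the injective envelope in Theorem~\ref{thm:main1} with the two defining features of a minmax automaton. Since $\mathcal{A}$ is reflexive, involutive and accepts $F$, property~(ii) of Theorem~\ref{thm:main1} (read with $\mathcal{A}_F$ in the role of the envelope) tells us that the map $\{x,y\}\to M_F$ sending $x\mapsto A^{*}$ and $y\mapsto F$ extends to a morphism of automata $\varphi\colon\mathcal{A}\to\mathcal{A}_F$; alternatively one may invoke property~(ii) of Theorem~\ref{thm:main3} together with Fact~\ref{fact:morphism}. Put $Q':=\varphi(Q)\subseteq\mathcal{S}_F$, so that $A^{*},F\in Q'$. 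I claim that $\varphi$ is an isomorphism of automata from $\mathcal{A}$ onto $\mathcal{A}':=({M_F}_{\restriction Q'},\{A^{*}\},\{F\})$, which is exactly what is asserted.

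First I would record two routine facts. For any $Q''\subseteq\mathcal{S}_F$ containing $A^{*}$ and $F$, the induced transition system ${M_F}_{\restriction Q''}$ is again reflexive and involutive (both properties are inherited when one restricts the state set), and the automaton $({M_F}_{\restriction Q''},\{A^{*}\},\{F\})$ accepts a language contained in $F$, since each of its successful paths is a successful path of $\mathcal{A}_F$ and $\mathcal{A}_F$ accepts exactly $F$. When moreover $Q''=\varphi(Q)$, the surjective morphism $\varphi\colon Q\to\varphi(Q)$ carries every successful path of $\mathcal{A}$ to a successful path of $({M_F}_{\restriction\varphi(Q)},\{A^{*}\},\{F\})$ with the same label, so this automaton accepts all of $F$, hence exactly $F$. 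Thus ${M_F}_{\restriction\varphi(Q)}$ is a legitimate competitor among the reflexive involutive transition systems whose associated automaton accepts $F$. Since $A$ is finite, $A^{*}$ is well-quasi-ordered by Higman's theorem, $F$ has a finite basis, and $\mathcal{S}_F$ is finite; so every automaton occurring here is finite and the cardinality comparisons below are meaningful.

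Now minimality enters twice. If $\varphi$ were not injective, then ${M_F}_{\restriction\varphi(Q)}$ would yield a reflexive involutive automaton accepting $F$ with $|\varphi(Q)|<|Q|$ states, contradicting the minimality of the number of states of the minmax automaton $\mathcal{A}$; hence $\varphi$ is injective and $|Q'|=|Q|$. Being an injective morphism, $\varphi$ carries the transition set $T$ bijectively onto a subset of the transition set of ${M_F}_{\restriction Q'}$; were this subset proper, ${M_F}_{\restriction Q'}$ would give a reflexive involutive automaton accepting $F$ with the same --- hence still minimal --- number $|Q|$ of states but strictly more transitions than $\mathcal{A}$, contradicting the maximality of the number of transitions of $\mathcal{A}$. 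Therefore $\varphi$ maps $T$ onto the full transition set of ${M_F}_{\restriction Q'}$, and so $\varphi$ is a bijection of state sets which is a morphism in both directions, i.e.\ an isomorphism of automata $\mathcal{A}\cong({M_F}_{\restriction Q'},\{A^{*}\},\{F\})$. The only genuinely substantial step --- and the sole point where \emph{both} clauses of the minmax definition are used --- is this last paragraph; the delicate thing to keep checking is that the candidate ${M_F}_{\restriction\varphi(Q)}$ accepts exactly $F$, not merely a sublanguage, for otherwise it would not be an admissible competitor.
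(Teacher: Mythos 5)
Your proof is correct and follows essentially the same route as the paper's: the paper extends the partial non-expansive map $\{x,y\}\to\mathcal S_F$ using injectivity of $\mathcal S_F$ (which is exactly what clause (ii) of Theorem~\ref{thm:main1} packages), sandwiches the accepted language via $F=d(x,y)\leq d_{M'}(x,y)\leq d_M(x,y)=F$ just as you do with successful paths, and then invokes minimality of $|Q|$ and maximality of $|T|$ in the same two steps. Your explicit verification that the image automaton accepts exactly $F$, and the remark on finiteness, are sound elaborations of what the paper leaves terse.
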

\begin{proof}
 To the automaton $\mathcal{A}:=\left( M,\left\{ x\right\},\left\{
y\right\} \right),$ we associate the metric space $\left(
Q,d_{M}\right) $ induced by $M$. Identifying $x$ with $A^{*}$ and $y$
with $F = d_{M}\left( x,y\right),$ the space $\mathcal{S}_{F}$
identifies with the injective envelope of the $2$-element space $\left\{ x,y\right\}.$
Let $i$ be the identity mapping on $\left\{ x,y\right\} $. This is
a partial non-expansive mapping from $\left( Q,d_{M}\right) $ into
$\mathcal{S}_{F}$. Since $\mathcal{S}_{F}$\ is  injective, this
partial map extends to a non-expansive mapping $f$ defined on $Q$.
Since $f$ is non-expansive, this map is an automata morphism. Let
$Q^{\prime }:=f\left( Q\right) $ and consider the transition system
$M^{\prime }:=\left( Q^{\prime },\mathcal S_{F}\restriction Q^{\prime }\right)$. We
have $F=d\left( x,y\right) \leq
d_{M^{\prime }}\left( x,y\right) \leq d_{M}\left( x,y\right) =F.$ Thus $%
d_{M^{\prime }}\left( x,y\right) =F.$ Since $\vert Q\vert$\ is minimal, it
follows that $\left| Q\right| =\left| Q^{\prime }\right| $ proving
that $f$ is injective. Now since $\left| T\right| $ is maximum, we
have $\left| T_{F\restriction Q^{\prime }}\right| =\ \left| T\right| $
proving that $f$ is an automata isomorphism from $\mathcal A$ to $\mathcal A^{\prime
}:=\left( M^{\prime },\left\{ x\right\} ,\left\{ y\right\} \right)$.
\end{proof}
 Let $M_{1}:=(Q_{1},T_{1})$ and $M_{2}:=(Q_{2},T_{2})$ be
two transition systems such that $Q_{1}$ and $Q_{2}$ have only the element $%
y $ in common. Let $\mathcal{A}_{1}:=\left( M_{1},\left\{
x\right\},\left\{ y\right\} \right) $ and $\mathcal{A}_{2}:=\left(
M_{2},\left\{ x\right\} ,\left\{ z\right\} \right) $ be two
automata. We denote $\mathcal{A}_{1}\cdot\mathcal{A}_{2}$ the
automaton $\left(
M,\left\{ x\right\},\left\{ y\right\} \right) $ where $M=(Q,T)$ $%
Q=Q_{1}\cup Q_{2}$ and $T=T_{1}\cup T_{2}.$
\begin{theorem} Let $F$ be a non-empty final segment of $A^{\ast }$. If  $F=F_{1}F_{2}$,with $F_{1}$ and $F_{2}$ final segments
of $A^{\ast },$ then an automaton $\mathcal{A}$ with initial
state $x$ and  final state $y$ accepting $F$
is minmax if and only if it decomposes into $\mathcal{A}_{1}\cdot\mathcal{A}_{2}$
 where $\mathcal{A}_{1}$ and $%
\mathcal{A}_{2}$ are two minmax automata accepting respectively
$F_{1}$ between $x$  and  $z$ and $F_{2}$ between $z$ and  $y$.
\end{theorem}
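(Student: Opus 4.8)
The plan is to use Theorem~\ref{thm:sum}, which identifies the metric space $\mathcal S_F$ with the ``concatenation'' $\mathcal S_{F_1}\cdot\mathcal S_{F_2}$ of metric spaces along the distinguished elements, together with Proposition~\ref{prop:minmax}, which says that any minmax automaton for a final segment embeds as an induced subautomaton of $\mathcal A_F$ sitting inside the full injective envelope. The key geometric fact to isolate first is that, in the metric space $\mathcal S_F\cong \mathcal S_{F_1}\cdot\mathcal S_{F_2}$, the identified vertex $z$ (the common element of the two copies, with $d(x,z)=F_1$ and $d(z,y)=F_2$) is a \emph{cut vertex}: every minimal metric form over $\{x,y\}$, equivalently every state $p$ of $\mathcal S_F$, satisfies $d(x,p)\cdot d(p,y)$ controlled by whether $p$ lies on the $\mathcal S_{F_1}$ side or the $\mathcal S_{F_2}$ side, and in particular $d(x,p)\cdot d(p,y)=F=F_1\cdot F_2$ forces $d(x,z)\le d(x,p)$ (resp.\ $d(p,y)\le d(z,y)$) on the appropriate side. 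This is exactly the statement that $z$ disconnects the underlying graph of $\mathcal A_F$, and it follows from the sum construction in Subsection~\ref{subsection:sum} and the fact that $\mathbf F^\circ(A^*)$ is a free monoid (so the factorization $F=F_1\cdot F_2$ is unique and the decomposition of $\mathcal S_F$ is canonical).

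Granting this, I would argue both directions. For the ``if'' direction, suppose $\mathcal A=\mathcal A_1\cdot\mathcal A_2$ with $\mathcal A_i$ minmax accepting $F_i$. Then $\mathcal A$ is reflexive and involutive (the defining properties are checked transition by transition and $\mathcal A$ is a disjoint union of $\mathcal A_1,\mathcal A_2$ glued at one state). It accepts $F=F_1F_2$ since successful paths in $\mathcal A$ factor through $z$ and their labels are concatenations of a label of a successful path in $\mathcal A_1$ with one in $\mathcal A_2$; conversely every word of $F_1F_2$ is so realized. For the counting conditions $(b)$ and $(c)$ of the definition of minmax: given any competing reflexive involutive automaton $\mathcal A'$ accepting $F$, Theorem~\ref{thm:sum} together with Proposition~\ref{prop:minmax} lets us pass to $\mathcal A_F$ and use that $z$ is a cut vertex, so that $\mathcal A'$ itself decomposes (or at least maps onto a decomposed automaton) as $\mathcal A'_1\cdot\mathcal A'_2$ with $\mathcal A'_i$ accepting $F_i$; minimality of $|Q_i|$ and maximality of $|T_i|$ for each factor then give $|Q|=|Q_1|+|Q_2|-1\le |Q'_1|+|Q'_2|-1=|Q'|$ and, in case of equality, $|T|=|T_1|+|T_2|\ge|T'_1|+|T'_2|=|T'|$.

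For the ``only if'' direction, let $\mathcal A$ be minmax accepting $F$. By Proposition~\ref{prop:minmax}, we may assume $\mathcal A=(M_{F\restriction Q'},\{A^*\},\{F\})$ for some $Q'\subseteq\mathcal S_F$ containing $A^*$ and $F$. The cut-vertex property of $z$ in $\mathcal S_F$ forces $z\in Q'$: otherwise, any path from $x$ to $y$ in $M_{F\restriction Q'}$ avoiding $z$ has label in a final segment strictly above $F$ — contradicting that $\mathcal A$ accepts exactly $F$ — or, more carefully, the restriction to $Q'\setminus\{z\}$ would accept a proper final segment, and one checks that re-adding $z$ cannot decrease the state count, so minimality is violated unless $z$ was already present. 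Once $z\in Q'$, set $Q'_i:=Q'\cap(\text{the }\mathcal S_{F_i}\text{-side})$, giving $\mathcal A=\mathcal A_1\cdot\mathcal A_2$ with $\mathcal A_i:=(M_{F\restriction Q'_i},\ldots)$ accepting $F_i$; and each $\mathcal A_i$ must itself be minmax, for a smaller (or more-transitioned) automaton for $F_i$ could be glued back in to contradict minmaxity of $\mathcal A$.

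\textbf{Main obstacle.} The delicate point is the cut-vertex claim and, relatedly, showing that an \emph{arbitrary} reflexive involutive automaton $\mathcal A'$ accepting $F$ — not just an induced subautomaton of $\mathcal A_F$ — admits a decomposition compatible with $F=F_1F_2$ so that the factorwise minimality estimates apply. The clean way is: compose $\mathcal A'$ with the retraction onto $\mathcal S_F$ (which exists by injectivity, as in Proposition~\ref{prop:minmax}), observe the cut vertex $z$ pulls back to a cut vertex of $\mathcal A'$, and then cut $\mathcal A'$ there; but one must check that this cutting does not lose transitions incident to $z$ in a way that breaks the inequality $|T'|\le|T'_1|+|T'_2|$, which is where the detailed bookkeeping of the sum construction in Subsection~\ref{subsection:sum} is needed. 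I expect this bookkeeping, rather than any conceptual difficulty, to be the bulk of the work.
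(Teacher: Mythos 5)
Your forward direction follows the paper's proof: invoke Theorem \ref{thm:sum} to identify $\mathcal S_F$ with $\mathcal S_{F_1}\cdot\mathcal S_{F_2}$, use Proposition \ref{prop:minmax} to realize a minmax $\mathcal A$ as an induced subautomaton of $\mathcal A_F=\mathcal A_{F_1}\cdot\mathcal A_{F_2}$, and cut at the identified vertex $z$. You actually supply more justification than the paper does for why $z$ must lie in $Q'$ (the paper simply asserts the decomposition); your observation that an induced subautomaton missing the cut vertex would accept the wrong language is the right one, since without $z$ there is no successful path at all and $F\neq\emptyset$.

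The converse is where you diverge, and where your route has a genuine gap that the paper's argument avoids. To verify conditions $(b)$ and $(c)$ for $\mathcal A_1\cdot\mathcal A_2$ you propose to decompose an \emph{arbitrary} competing reflexive involutive automaton $\mathcal A'$ accepting $F$ at a pullback of $z$ under the contraction $f:Q'\to\mathcal S_F$. This is exactly the obstacle you flag, and it is not mere bookkeeping: $f^{-1}(z)$ may contain several states, paths from $x$ to a chosen element of $f^{-1}(z)$ may wander through the $\mathcal S_{F_2}$-side, and there is no reason an arbitrary $\mathcal A'$ should itself split as a concatenation. The paper sidesteps all of this: take \emph{some} minmax automaton $\mathcal A$ accepting $F$; by the already-proved forward direction it decomposes as $\mathcal A'_1\cdot\mathcal A'_2$ with each $\mathcal A'_i$ minmax for $F_i$; since all minmax automata for $F_i$ have, by definition, the same number of states and the same number of transitions, $\mathcal A_1\cdot\mathcal A_2$ has exactly the same state and transition counts as $\mathcal A$, and it accepts $F$ (every successful path passes through the gluing point), hence it is minmax. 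You should replace your comparison-with-an-arbitrary-competitor argument by this comparison with a single minmax witness; it turns the problematic half of your proof into three lines.
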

\begin{proof}  From Theorem \ref{thm:sum}, we have $\mathcal{
S}_{F_{1}}\cdot\mathcal{S}_{F_{2}}\cong \mathcal{S}_{F_{1}F_{2}}. $
According to Proposition \ref{prop:minmax}, a minmax automaton $\mathcal{A}$
accepting $F$ is a subautomaton of
$\mathcal{A}_{F}=\mathcal{A}_{F_{1}}\cdot\mathcal{A}_{F_{2}}.$
Thus $\mathcal{A}$ decomposes into $\mathcal{A}_{1}\cdot\mathcal{A}_{2}$ where
$\mathcal{A}_{1}$ is a subautomaton of $%
\mathcal{A}_{F_{1}}$and $\mathcal{A}_{2}$ is a subautomaton of
$\mathcal{A}_{F_{2}}.$ Since $\mathcal{A}$ is minmax, both
$\mathcal{A}_{1}$ and $\mathcal{A}_{2}$ are minmax. Conversely,
assume that $\mathcal{A}_{1}$ and $\mathcal{ A}_{2}$ are minmax.
Let $\mathcal{ A} $ be a minmax automaton accepting $F_{1}\cdot F_{2}.$
Then $\mathcal{ A} $ decomposes into $\mathcal{ A}_{1}^{\prime
}\cdot\mathcal{A}_{2}^{\prime }$ where $\mathcal{A}_{1}^{\prime }$ and
$\mathcal{A}_{2}^{\prime } $ are minmax. The automaton
$\mathcal{A}_{1}$ and $\mathcal{A}_{1}^{\prime } $ (resp.
$\mathcal{A}_{2}$ and $\mathcal{A}_{2}^{\prime }$) have the same
number of states and transitions. That is
$\mathcal{A}_{1}\cdot\mathcal{A}_{2}$ is minmax.
\end{proof}
\begin{example} (Mike Main,1989, communicated by Maurice
Nivat \cite{nivat}). We give an example of two non-isomorphic minmax automata
accepting the same language $L\in F\left( A^{\ast }\right) $.

Let $A=\left\{ a,b,c,a^{\prime },b^{\prime },c^{\prime }\right\} $ with $%
\overline{a}=a^{\prime },\overline{b}=b^{\prime }$ and $\overline{c}%
=c^{\prime }.$ Consider the automata represented on Figure 2. To
each of these  
automata, we associate the involutive and reflexive automata obtained by replacing each transition ($p$%
, $\alpha, q$) by ($p$, $\alpha, q$) and ($q$, $\overline{\alpha
},p$) and adding a loop at every vertex.The language accepted by each of these automata 
between $x$ and $y$ is $L=\uparrow \left\{ab,ac,ba,bc,ca,cb\right\} .$ As it is easy to check, these automata are minmax but not isomorphic.
\end{example}
\unitlength=1cm 
\begin{picture}(16,7.5) 
\put(0,4){\circle*{.15}}
\put(3,4){\circle*{.15}}
\put(6,4){\circle*{.15}}
\put(3,7){\circle*{.15}}
\put(3,1){\circle*{.15}} 
\put(0,4){\line(1,0){3}}
\put(1.5,4){\vector(1,0){.1}}
\put(1.5,4.2){{\small $b$}} 
\bezier{300}(0,4)(1,7)(3,7)
\put(1,6){\vector(1,1){.1}}
\put(.7,6){{\small  $a$}}
\bezier{300}(0,4)(1,.9)(3,1)
\put(1,2){\vector(1,-1){.1}}
\put(1,2.2){{\small $c$}} 
\bezier{300}(3,7)(5,7.1)(6,4)
\put(5,6){\vector(1,-1){.1}} 
\put(5,6.2){{\small $b$}} 
\bezier{300}(6,4)(5,.9)(3,1)
\put(5,2){\vector(1,1){.1}}
\put(4.8,2) {{\small $a$}}
\bezier{200}(3,4)(4.5,5)(6,4)
\put(4.5,4.5){\vector(1,0){.1}}
\put(4.4,4.2){{\small $a$}}
\bezier{200}(3,4)(4.5,3)(6,4)
\put(4.5,3.5){\vector(1,0){.1}} 
\put(4.5,3.7){{\small $c$}} 
\bezier{300}(3,7)(4,4)(6,4)
\put(4,5){\vector(1,-1){.1}}
\put(4,5.2){{\small $c$}} 
\bezier{300}(6,4)(4,4)(3,1)
\put(4,3){\vector(1,1){.1}} 
\put(3.7,3){{\small $b$}}
\put(-.4,4){$x$}
\put(6.2,4){$y$} 
%

\put(8,4){\circle*{.15}}
\put(11,4){\circle*{.15}}
\put(14,4){\circle*{.15}}
\put(11,7){\circle*{.15}}
\put(11,1){\circle*{.15}} 
\put(11,4){\line(1,0){3}}
\put(12.5,4){\vector(1,0){.1}}
\put(12.5,4.2){{\small $b$}} 
\bezier{300}(8,4)(9,7)(11,7)
\put(9,6){\vector(1,1){.1}}
\put(8.7,6){{\small  $b$}}
\bezier{300}(8,4)(9,.9)(11,1)
\put(9,2){\vector(1,-1){.1}}
\put(9,2.2){{\small $a$}}
\bezier{300}(11,7)(13,7.1)(14,4)
\put(13,6){\vector(1,-1){.1}} 
\put(13,6.2){{\small $a$}}
\bezier{300}(14,4)(13,.9)(11,1)
\put(13,2){\vector(1,1){.1}}
\put(12.8,2) {{\small $c$}}
\bezier{200}(8,4)(9.5,5)(11,4)
\put(9.5,4.5){\vector(1,0){.1}}
\put(9.4,4.2){{\small $a$}}
\bezier{100}(8,4)(9.5,3)(11,4)
\put(9.5,3.5){\vector(1,0){.1}} 
\put(9.5,3.7){{\small $c$}}
\bezier{300}(8,4)(10,3.9)(11,7)
\put(10,5){\vector(1,1){.1}}
\put(9.8,5){{\small $c$}} 
\bezier{300}(8,4)(10,4.1)(11,1)
\put(10,3){\vector(1,-1){.1}} 
\put(10.2,3){{\small $b$}} 
\put(7.6,4){$x$}
\put(14.2,4){$y$}
\put(6.7,0){Figure 2}
\end{picture}
\section{A description of the injective envelope}

We describe the injective envelope $\mathcal{S}_{F}$ in 
Galois lattice terms. We derive a test of its finiteness which leads to the notion of Ferrers language.  

\subsection{Incidence structures}

We recall first some basic facts about incidence structures, Galois lattices and Ferrers relations. We follow the exposition given in \cite{pouzet-sikaddour-zaguia}.

An
\textit{incidence structure} $R$ is a triple $\left(V,\rho
,W\right) $ where $\rho $ is a subset of the product $V\times
W.$ We set  $\rho^{-1}:=\{(x,y): (y,x)\in \rho\}$ and $R^{-1}:= (W, \rho^{-1}, V)$, that we call the \emph{dual} of $R$. We denote by  $\lnot \rho $ the relation $V\times W\setminus \rho $ and $%
\lnot R$ the resulting incidence structure.\\
A subset of $V\times W$ of the form $X\times Y$ is a \emph{rectangle}. Let $\left( X,Y\right)\in \powerset(V)\times \powerset(W)$. We set 
$R_{\wedge}( X):=\{y \in W: x\rho y\ \mbox{for all} \; x\in X\}$, 
$R^{-1}_{\wedge}( Y):= \{x\in V: x\rho y\ \mbox{for all}\ y\in Y\}.$
We recall that  the set $X\times Y$ is a maximal rectangle included into $\rho$ if and only if $X= R^{-1}_{\wedge}(Y)$ and $Y=R_{\wedge}( X)$.   
The \textit{Galois lattice} $\gal(R) $ of $R$ is the collection,
ordered by inclusion, of subsets of $V$ of the form $R^{-1}_{\wedge}( Y)$ for $Y\in \powerset(W)$. This is  a complete lattice; the largest element is $V$ ($=R^{-1}_{\wedge}(\emptyset)$). Then,  $\gal (R^{-1})$ is the collection,  ordered by inclusion,   of subsets of $W$ of the form $R_{\wedge}( X)$ for $X\in \powerset(V)$. We recall the important fact that $\gal (R)$ and $\gal (R^{-1})$ are dually isomorphic. Since  $\gal(R) $ consists of   intersections of sets of the form $R^{-1}(y)$ for $y\in W$, $\gal (R)$ is finite if and only if the set of $R^{-1}(y)$ for $y\in W$ is finite; since $\gal(R)$ is dually isomorphic to $\gal(R^{-1})$, it is finite if and only if the set of $R(x)$ for $x\in V$ is finite. 

We give two examples from the theory of ordered sets. 

\begin{fact}\label{fact7}   If $R:=\left( P,\leq, P\right)$ is an ordered set, 
then $\gal (R)$,  the \emph{MacNeille completion of} $P$,  is a  complete  lattice in which every member is a join and a meet of elements of $P$. And $\gal(\lnot R)$ is the set of final segments of $P$ ordered by inclusion.
\end{fact}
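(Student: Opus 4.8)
The plan is to specialize the general facts about Galois lattices recalled just above to the incidence structure $R := (P, \leq, P)$ and then argue directly for each of the two assertions.

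\emph{The lattice $\gal(R)$ and join--meet density.} For $X, Y \subseteq P$ the set $R_{\wedge}(X)$ is the set of upper bounds of $X$ and $R^{-1}_{\wedge}(Y)$ the set of lower bounds of $Y$; hence the members of $\gal(R)$ are exactly the sets of lower bounds of subsets of $P$, which are precisely the Dedekind--MacNeille cuts of $P$, so $\gal(R)$ \emph{is} the MacNeille completion of $P$, and it is a complete lattice by the general remark above. The map $p \mapsto \downarrow p = R^{-1}_{\wedge}(\{p\})$ is an order-embedding of $P$ into $\gal(R)$, since $\downarrow p \subseteq \downarrow q \iff p \leq q$; we identify $P$ with its image. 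Let $C = R^{-1}_{\wedge}(Y) \in \gal(R)$; then $C$ is a down-set, and since $C$ belongs to $\gal(R)$ it is Galois-closed, i.e. $C = R^{-1}_{\wedge}(R_{\wedge}(C))$. Because $R^{-1}_{\wedge}(R_{\wedge}(C)) = \bigcap\{R^{-1}_{\wedge}(\{y\}) : y \in R_{\wedge}(C)\} = \bigcap\{\downarrow y : y \in R_{\wedge}(C)\}$ and meets in $\gal(R)$ are intersections, $C$ is the meet in $\gal(R)$ of the family $(y)_{y \in R_{\wedge}(C)}$ of elements of $P$. On the other hand $C \in \gal(R)$ contains $\downarrow x$ for every $x \in C$, while each $x \in C$ lies in $\downarrow x$, so the join in $\gal(R)$ of $(\downarrow x)_{x \in C}$ is exactly $C$. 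Thus every member of $\gal(R)$ is both a join and a meet of elements of $P$.

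\emph{The lattice $\gal(\lnot R)$.} Here $\lnot R = (P, \not\leq, P)$, so $(\lnot R)^{-1}_{\wedge}(Y) = \{x \in P : x \not\leq y \text{ for all } y \in Y\}$. Such a set is always a final segment: if $x$ lies in it and $x \leq z$, then $z \leq y$ for some $y \in Y$ would force $x \leq y$, a contradiction, so $z$ lies in it too. Conversely, given a final segment $F$, put $Y := P \setminus F$; if $x \in F$ then no $y \in Y$ satisfies $x \leq y$ (otherwise $y \in F$, as $F$ is an up-set), while if $x \notin F$ the condition fails at $y := x$. Hence $(\lnot R)^{-1}_{\wedge}(P \setminus F) = F$, so the members of $\gal(\lnot R)$ are exactly the final segments of $P$; and since the Galois lattice of an incidence structure is ordered by inclusion by definition, $\gal(\lnot R)$ coincides, as an ordered set, with the set of final segments of $P$ ordered by inclusion.

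\emph{Main point.} There is essentially no obstacle beyond unwinding the definitions; the only step asking for a little care is the second part, where one must complement \emph{inside} $P$ and must verify that the sets $(\lnot R)^{-1}_{\wedge}(Y)$ \emph{exhaust} the final segments rather than merely sitting among them --- both settled by the explicit witness $Y = P \setminus F$.
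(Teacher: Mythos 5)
Your proof is correct. The paper states Fact~\ref{fact7} without proof, presenting it as a recalled standard result from the theory of ordered sets, so there is no argument of the paper's to compare against; your unwinding of the definitions --- identifying $\gal(R)$ with the sets of lower bounds (the Dedekind--MacNeille cuts), using that meets are intersections and that each cut is the least closed set containing the $\downarrow x$ for $x$ in it, and exhibiting the explicit witness $Y = P\setminus F$ to show that $\gal(\lnot R)$ exhausts the final segments --- is the standard argument and is complete.
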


Now, we mention the facts we need. 

Let $R$ := $\left( V,\rho,W\right) $ and $R^{\prime }:=\left(
V^{\prime },\rho ^{\prime },W^{\prime }\right) $ be two incidence
structures. According to Bouchet \cite{Bo}, a \textit{coding} from $R$ into $%
R^{\prime }$ is a pair of maps $f:V\longrightarrow V^{\prime
},g:W\longrightarrow W^{\prime }$ such that
$$x\rho
y\Longleftrightarrow f\left( x\right) \rho ^{\prime }g\left(
y\right).$$

Bouchet's  Coding Theorem \cite{Bo} below is a striking illustration of the links between coding and embedding.
\begin{theorem}\label{bouchethm} Let $(T ,\leq)$ be a complete lattice and $R$ be an incidence structure. Then $R$ has a coding into
$(T ,\leq, T )$ if and only if $\gal(R)$  is embeddable in $T$.
\end{theorem}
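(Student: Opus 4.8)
The plan is to prove both directions by exploiting the duality between $\gal(R)$ and $\gal(R^{-1})$ and the fact, recalled just before the statement, that maximal rectangles $X \times Y \subseteq \rho$ correspond to pairs $X = R^{-1}_{\wedge}(Y)$, $Y = R_{\wedge}(X)$. First suppose $R = (V, \rho, W)$ has a coding $(f,g)$ into $(T, \leq, T)$. I would define $\varphi : \gal(R) \to T$ by sending a Galois-closed set $C = R^{-1}_{\wedge}(Y)$ to $\bigvee \{ f(x) : x \in C \}$, using completeness of $T$ to guarantee the join exists. The key point is to check this is well-defined and order-preserving, and then that it is an order-embedding: if $C_1 \not\subseteq C_2$, pick $x \in C_1 \setminus C_2$; since $C_2$ is Galois-closed there is $y \in R_{\wedge}(C_2)$ with $\lnot(x \rho y)$, so $\lnot(f(x) \leq g(y))$ by the coding property, whereas $f(x') \leq g(y)$ for all $x' \in C_2$, giving $\varphi(C_2) \leq g(y)$ but $\varphi(C_1) \not\leq g(y)$. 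Hence $\varphi(C_1) \not\leq \varphi(C_2)$, so $\varphi$ is injective and reflects the order, i.e.\ $\gal(R)$ embeds into $T$.

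For the converse, suppose $e : \gal(R) \hookrightarrow T$ is an order-embedding. I would build the coding by assigning to each $x \in V$ the element $f(x) := e(\hat x)$ where $\hat x := R^{-1}_{\wedge}(R_{\wedge}(\{x\}))$ is the smallest Galois-closed set containing $x$, and to each $y \in W$ the element $g(y) := e(R^{-1}_{\wedge}(\{y\}))$, noting $R^{-1}_{\wedge}(\{y\})$ is Galois-closed. The verification is that $x \rho y$ iff $x \in R^{-1}_{\wedge}(\{y\})$ iff $\hat x \subseteq R^{-1}_{\wedge}(\{y\})$ (the last equivalence because $R^{-1}_{\wedge}(\{y\})$ is closed and $\hat x$ is the closure of $\{x\}$) iff $e(\hat x) \leq e(R^{-1}_{\wedge}(\{y\}))$ (since $e$ is an order-embedding) iff $f(x) \leq g(y)$. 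That chain of equivalences is exactly the coding condition.

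The main obstacle I anticipate is the first direction: one must be careful that $\varphi$ is genuinely an order-embedding and not merely monotone, and the argument above hinges on producing, for $x \notin C_2$ with $C_2$ closed, a witness $y \in R_{\wedge}(C_2)$ separating $x$ — this is precisely where Galois-closedness of $C_2$ is used, via $C_2 = R^{-1}_{\wedge}(R_{\wedge}(C_2))$. A secondary subtlety is ensuring the joins defining $\varphi$ actually land on the correct lattice elements (that $\varphi(R^{-1}_{\wedge}(Y))$ does not depend on the choice of generating set $Y$), which follows since the closed set $C$ determines, and is determined by, $R_{\wedge}(C)$. Once these points are settled, both implications are routine, and together they give the stated equivalence.
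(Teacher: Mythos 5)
Your proof is correct. Note, however, that the paper does not prove this statement at all: Theorem \ref{bouchethm} is quoted from Bouchet \cite{Bo} and used as a black box (e.g.\ in Fact \ref{fact9}), so there is no in-paper argument to compare against. What you give is the standard proof, and both directions check out. For the forward direction, $\varphi(C):=\bigvee\{f(x):x\in C\}$ is trivially well defined (it depends only on the set $C$, not on a generating $Y$), is monotone, and your separation argument is exactly right: for $x\in C_1\setminus C_2$ the closedness $C_2=R^{-1}_{\wedge}(R_{\wedge}(C_2))$ produces $y\in R_{\wedge}(C_2)$ with $\lnot(x\rho y)$, whence $\varphi(C_2)\leq g(y)$ while $f(x)\not\leq g(y)$, so $\varphi$ reflects the order and is injective; the case $C_2=\emptyset$ (when $\emptyset\in\gal(R)$) is covered since then $R_{\wedge}(\emptyset)=W$ and the same argument applies, with $\bigvee\emptyset$ being the bottom of the complete lattice $T$. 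For the converse, the chain $x\rho y\Leftrightarrow x\in R^{-1}_{\wedge}(\{y\})\Leftrightarrow \hat{x}\subseteq R^{-1}_{\wedge}(\{y\})\Leftrightarrow e(\hat{x})\leq e(R^{-1}_{\wedge}(\{y\}))$ is valid because $R^{-1}_{\wedge}(\{y\})$ is Galois-closed, $\hat{x}$ is the closure of $\{x\}$, and $e$ both preserves and reflects order. The only point worth making explicit is that ``embeddable'' here means order-embedding (as the paper's use in Fact \ref{fact9} confirms); your argument establishes exactly that, and completeness of $T$ is used only to form the joins in the forward direction.
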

The basic facts about coding we need are the following:

\begin{fact}\label{fact9} Let $\left(f,g\right) $ be a coding from $R$ into $%
R^{\prime }$.
\begin{enumerate}[(a)]
\item $\left( f,g\right) $ is a coding from $\lnot R$ to $\lnot R^{\prime };$
\item $\gal(R)$ is embeddable into $\gal(R')$; 
\item If $f$ is surjective, then $\gal(R)$ identifies with an intersection closed
subset of $\gal( R^{\prime })$.
\end{enumerate}
\end{fact}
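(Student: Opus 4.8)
The plan is to verify each of the three items directly from the definition of a coding, reducing everything to the equivalence $x\rho y\Longleftrightarrow f(x)\rho' g(y)$ and the description of $\gal(R)$ as the set of intersections $R^{-1}_{\wedge}(Y)=\bigcap_{y\in Y}R^{-1}(y)$ for $Y\subseteq W$.

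For $(a)$, I would simply note that for all $x\in V$, $y\in W$ we have $\lnot(x\rho y)\Longleftrightarrow \lnot(f(x)\rho' g(y))$, which is the contrapositive of the defining equivalence; so $(f,g)$ is automatically a coding from $\lnot R=(V,\lnot\rho,W)$ into $\lnot R'=(V',\lnot\rho',W')$. This is immediate.

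For $(b)$, the idea is to define the candidate embedding $\Phi:\gal(R)\to\gal(R')$ on generators. Given $Y\subseteq W$, send the closed set $R^{-1}_{\wedge}(Y)$ to $R'^{-1}_{\wedge}(g(Y))$. The first thing to check is that this is well defined: the coding condition gives, for each fixed $y$, the set equality $f(R^{-1}(y))\subseteq R'^{-1}(g(y))$ and more precisely $R^{-1}(y)=f^{-1}(R'^{-1}(g(y)))$, so that $R^{-1}_{\wedge}(Y)=f^{-1}\bigl(R'^{-1}_{\wedge}(g(Y))\bigr)$; hence the right-hand side depends only on the closed set $R^{-1}_{\wedge}(Y)$, not on the chosen $Y$. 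Next I would show $\Phi$ is order-preserving (clear, since $A\subseteq B$ in $\gal(R)$ means the corresponding preimages are nested) and order-reflecting, i.e.\ injective as an order embedding: if $R'^{-1}_{\wedge}(g(Y_1))\subseteq R'^{-1}_{\wedge}(g(Y_2))$, apply $f^{-1}$ to both sides and use the identity above to get $R^{-1}_{\wedge}(Y_1)\subseteq R^{-1}_{\wedge}(Y_2)$. Since a lattice embedding into a complete lattice need only be an order-embedding here (we are not claiming it preserves arbitrary joins, only that $\gal(R)$ embeds as an ordered set, which is what Bouchet's theorem uses), this suffices. The one point needing care is that every element of $\gal(R')$ in the image is genuinely of the form $R'^{-1}_{\wedge}(Z)$ for some $Z\subseteq W'$, which it is, with $Z=g(Y)$.

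For $(c)$, assume $f$ is surjective. The extra input is that surjectivity of $f$ lets us recover a closed set of $R$ from its image: if $X$ is a closed set of $R'$ that happens to contain (or meet appropriately) the image $f(V)$, then $f^{-1}(X)$ is closed in $R$, and conversely every closed set of $R$ arises this way from $f^{-1}$ applied to a closed set of $R'$ lying in the image of $\Phi$. The key step is to show the image $\Phi(\gal(R))$ is closed under arbitrary intersections taken inside $\gal(R')$: given a family $\bigl(R'^{-1}_{\wedge}(g(Y_i))\bigr)_i$, its intersection in $\gal(R')$ is $R'^{-1}_{\wedge}\bigl(\bigcup_i g(Y_i)\bigr)=R'^{-1}_{\wedge}\bigl(g(\bigcup_i Y_i)\bigr)=\Phi(R^{-1}_{\wedge}(\bigcup_i Y_i))$, which is again in the image. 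Thus $\Phi$ identifies $\gal(R)$ with an intersection-closed subset of $\gal(R')$; surjectivity of $f$ is what guarantees $\Phi$ is also a meet-isomorphism onto this subset rather than merely an order-embedding (it rules out the image collapsing two distinct closed sets of $R$). I expect the main obstacle to be bookkeeping: being careful about the distinction between the two roles $\gal(R)$ versus $\gal(R^{-1})$ play, and making sure the "well-definedness modulo closure" argument in $(b)$ is stated cleanly, since $Y$ is not unique for a given closed set. Everything else is a routine unwinding of the defining equivalence of a coding together with the identity $R^{-1}_{\wedge}(Y)=f^{-1}(R'^{-1}_{\wedge}(g(Y)))$, which I would isolate as the workhorse lemma at the start of the proof.
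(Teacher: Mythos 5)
Part $(a)$ is fine and matches the paper's (immediate) argument. The genuine gap is in part $(b)$: the map $\Phi$ you define by $R^{-1}_{\wedge}(Y)\mapsto R'^{-1}_{\wedge}(g(Y))$ is in general neither well defined nor order-preserving, and your justification conflates a set with its $f$-preimage. Your workhorse identity $R^{-1}_{\wedge}(Y)=f^{-1}\bigl(R'^{-1}_{\wedge}(g(Y))\bigr)$ is correct, but from $f^{-1}(A')=f^{-1}(B')$ one cannot conclude $A'=B'$ unless $f$ is surjective. Concretely, take $V=\{v\}$, $W=\{w_1,w_2\}$, $\rho=V\times W$, $V'=\{v,u\}$, $W'=W$, $\rho'=\{(v,w_1),(v,w_2),(u,w_1)\}$, with $f$ the inclusion and $g$ the identity; this is a coding, and $R^{-1}_{\wedge}(\{w_1\})=R^{-1}_{\wedge}(\{w_2\})=\{v\}$, yet $R'^{-1}_{\wedge}(g(\{w_1\}))=\{v,u\}\neq\{v\}=R'^{-1}_{\wedge}(g(\{w_2\}))$. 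The same example kills order-preservation (nested preimages do not give nested sets). The correct embedding --- the one the paper uses explicitly in its proof of $(c)$, and implicitly via Bouchet's theorem for $(b)$ --- is $X\mapsto R'^{-1}_{\wedge}\bigl(R'_{\wedge}(f(X))\bigr)$, the Galois closure of $f(X)$ in $R'$: it is well defined because it depends only on $X$, monotone because images and closures are monotone, and order-reflecting because $f^{-1}\bigl(R'^{-1}_{\wedge}(R'_{\wedge}(f(X)))\bigr)=X$ for every $X\in\gal(R)$; this last equality follows from your identity, since $R'^{-1}_{\wedge}(R'_{\wedge}(f(X)))$ is squeezed between $f(X)$ and $R'^{-1}_{\wedge}(g(Y))$ for any $Y$ with $X=R^{-1}_{\wedge}(Y)$.

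For part $(c)$ your computation that $\{R'^{-1}_{\wedge}(g(Y)):Y\subseteq W\}$ is closed under intersections is correct, and once $f$ is surjective your map does become well defined and order-preserving, since then $A'=f(f^{-1}(A'))$ recovers a set from its preimage; in that case it coincides with the closure map above (both equal $f(X)$). But you have mislocated the role of surjectivity: it is not needed to prevent ``collapsing two distinct closed sets'' --- injectivity already follows from the order-reflection direction, which holds without it --- rather, it is what makes your particular map single-valued and monotone at all, and (in the paper's phrasing) what puts the least element in the image. So $(c)$ survives essentially intact, but $(b)$ as written does not; since $(c)$ leans on $(b)$ for the embedding claim, you should either switch to the closure map throughout or restrict your definition of $\Phi$ to the surjective case.
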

\begin{proof}
$(a)$ Immediate consequence of the definition.

\noindent$(b)$ Follows from Bouchet's Theorem. 

\noindent $(c)$ The map $X\longrightarrow X^{\prime }=R'^{-1}_{\wedge}
(R'_{\wedge}( X)) $ is an embedding
from $\gal(R) $ into $\gal(R^{\prime }) $ which
preserves non-empty intersections. If $f$ is surjective, then the
least element of the Galois lattice is preserved, hence $\gal(
R)$  identifies with an intersection closed subset  of
$\gal(R^{\prime}).$
\end{proof}

\begin{fact}\label{fact10}  Let $\left( f_{i},g_{i}\right)_i$ be a family of codings
from $\left(V,\rho _{i},W\right)$ into $\left( V_{i},\theta
_{i},W_{i}\right)$. Then, the pair $\left( \Pi_i f_{i},\Pi_i g_{i}\right) $ is a coding from $\left(
V,\bigcup_i \rho _{i},W\right) $ into $\left( \Pi_i V_{i},\lnot \Pi_i \lnot \theta
_{i},\Pi_i W_{i}\right).$
\end{fact}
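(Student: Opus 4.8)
The plan is to verify the coding condition directly by unwinding the definitions, since all the combinatorial content is already packaged in how the relation $\bigcup_i \rho_i$ on $V\times W$ relates to the component relations $\theta_i$ and their complements on the product sets. First I would recall that for the pair $\bigl(\Pi_i f_i, \Pi_i g_i\bigr)$ to be a coding from $\left(V,\bigcup_i \rho_i, W\right)$ into $\left(\Pi_i V_i, \lnot\Pi_i\lnot\theta_i, \Pi_i W_i\right)$, one must show that for all $x\in V$, $y\in W$,
\begin{equation*}
x \left(\bigcup_i \rho_i\right) y \iff \bigl(f_i(x)\bigr)_i \ \bigl(\lnot\Pi_i\lnot\theta_i\bigr)\ \bigl(g_i(y)\bigr)_i .
\end{equation*}
The right-hand side, by definition of $\lnot$ and of the product relation $\Pi_i\lnot\theta_i$, says exactly that it is \emph{not} the case that $f_i(x) \,(\lnot\theta_i)\, g_i(y)$ for every $i$; that is, there exists some index $i_0$ with $f_{i_0}(x)\,\theta_{i_0}\,g_{i_0}(y)$. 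Meanwhile the left-hand side says there exists $i_0$ with $x\,\rho_{i_0}\,y$.

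So the whole statement reduces, index by index, to the hypothesis that each $(f_i,g_i)$ is a coding from $(V,\rho_i,W)$ into $(V_i,\theta_i,W_i)$, i.e. $x\,\rho_i\,y \iff f_i(x)\,\theta_i\,g_i(y)$. Taking the existential quantifier over $i$ on both sides of this equivalence gives precisely the desired biconditional: $\exists i\ (x\,\rho_i\,y)$ holds iff $\exists i\ (f_i(x)\,\theta_i\,g_i(y))$ holds. I would write this out in two short implications (or simply note the existential quantifier commutes with the pointwise equivalence) and be done.

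There is essentially no obstacle here; the only point requiring a moment's care is parsing the target relation $\lnot\Pi_i\lnot\theta_i$ correctly — remembering that $\Pi_i \lnot\theta_i$ is the relation on $\Pi_i V_i \times \Pi_i W_i$ holding between $(a_i)_i$ and $(b_i)_i$ when $a_i\,(\lnot\theta_i)\,b_i$ for \emph{all} $i$ (in the product convention used for incidence structures, analogous to the product of transition systems defined earlier), so that its complement holds when $a_i\,\theta_i\,b_i$ for \emph{at least one} $i$. Once that De Morgan-style reading is in place, the proof is a one-line quantifier manipulation, which is exactly why this is stated as a \textbf{Fact}.
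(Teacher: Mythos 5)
Your proof is correct and is essentially the argument the paper gives: the paper first notes that $\left(\Pi_i f_i,\Pi_i g_i\right)$ codes $\left(V,\bigcap_i\rho_i,W\right)$ into $\left(\Pi_i V_i,\Pi_i\theta_i,\Pi_i W_i\right)$ and then invokes Fact \ref{fact9}(a) to pass to complements, which amounts to exactly the De Morgan/quantifier manipulation you carry out directly. The only difference is that you unwind $\lnot\Pi_i\lnot\theta_i$ by hand instead of citing the complementation fact; both reduce to the observation that an existential quantifier over $i$ commutes with the pointwise equivalences $x\,\rho_i\,y\iff f_i(x)\,\theta_i\,g_i(y)$.
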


\begin{proof}
Clearly $( \Pi_i f_{i},\Pi_i g_{i}) $ is a coding from $(V,\bigcap_i \rho _{i},W)$ into 
$(\Pi_i V_{i},\Pi_i \theta _{i},\Pi_i W_{i})$.  
Indeed, if  $x:=( x_{i})_i\;  \text{and}\;  y:=(y_{i})_i,\; \text{then }\;  x\Pi_i \theta _{i}y\;  \text{ means }\; x_{i}\theta_i y_{i} \; 
\text{ for all }i$. From Fact \ref{fact9} (a) we get that $\left( \Pi_i f_{i},\Pi_i
g_{i}\right) $ is a coding from $\left( V,\bigcup_i \rho _{i},W\right) $ into $%
\left( \Pi_iV_{i},\lnot \Pi_i \lnot \theta _{i},\Pi_i W_{i}\right) .$
\end{proof}

\begin{corollary}\label{cor10}If for every $i$, $V_{i}=W_{i}$ and $\theta _{i}$
is of the form $\lnot \leq_{i} $ for some ordering $\leq_{i} $ on $V_{i}$,
then there is a coding from $\left( V,\bigcup_i \rho _{i},W\right) $ into
$\left( \Pi_i V_{i},\lnot \leq,\Pi_i V_{i}\right) $ where $\leq $ is
the product ordering on  $\Pi_i V_{i}$.
\end{corollary}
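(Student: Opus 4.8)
The plan is to obtain Corollary~\ref{cor10} as the special instance of Fact~\ref{fact10} in which all the target incidence structures have the form $(V_i,\lnot\leq_i,V_i)$. First I would set, for each index $i$, $W_i:=V_i$ and $\theta_i:=\lnot\leq_i$, where $\leq_i$ is the given ordering on $V_i$. With these choices the hypotheses of Fact~\ref{fact10} hold verbatim: each $(f_i,g_i)$ is a coding from $(V,\rho_i,W)$ into $(V_i,\theta_i,W_i)=(V_i,\lnot\leq_i,V_i)$. Hence Fact~\ref{fact10} applies and yields that $(\Pi_i f_i,\Pi_i g_i)$ is a coding from $(V,\bigcup_i\rho_i,W)$ into $(\Pi_i V_i,\lnot\Pi_i\lnot\theta_i,\Pi_i V_i)$.

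The only thing left is to identify the relation $\lnot\Pi_i\lnot\theta_i$ with $\lnot\leq$, where $\leq$ is the product ordering on $\Pi_i V_i$. Since complementation of a relation inside a fixed ambient set is an involution, $\lnot\theta_i=\lnot\lnot\leq_i={\leq_i}$. By the description of products of relations used in the proof of Fact~\ref{fact10}, $\Pi_i\lnot\theta_i$ is the relation on $\Pi_i V_i$ for which $(x_i)_i\,(\Pi_i\lnot\theta_i)\,(y_i)_i$ holds exactly when $x_i\leq_i y_i$ for every $i$; this is precisely the product ordering $\leq$. Therefore $\lnot\Pi_i\lnot\theta_i=\lnot\leq$, and the coding produced above is a coding from $(V,\bigcup_i\rho_i,W)$ into $(\Pi_i V_i,\lnot\leq,\Pi_i V_i)$, which is the assertion of the corollary.

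I do not expect a genuine obstacle here: the content is entirely bookkeeping, the delicate point being merely to keep straight that ``the negation of the product of the negations'' collapses, under the hypothesis $\theta_i=\lnot\leq_i$, to the negation of the product order, using both that double complementation is the identity and that the product of the relations $\leq_i$ is the order-theoretic product order. For later use it is worth noting (via Fact~\ref{fact9}(b) and Fact~\ref{fact7}) that the resulting coding embeds $\gal(V,\bigcup_i\rho_i,W)$ into $\gal(\Pi_i V_i,\lnot\leq,\Pi_i V_i)$, the lattice of final segments of the product order $\left(\Pi_i V_i,\leq\right)$, and that this embedding is intersection-closed whenever each $f_i$ is surjective by Fact~\ref{fact9}(c); this is the form in which the corollary will feed into the description of $\mathcal S_F$.
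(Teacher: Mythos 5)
Your proposal is correct and matches the paper's intent exactly: the paper gives no separate proof of Corollary~\ref{cor10}, treating it precisely as the specialization of Fact~\ref{fact10} to $\theta_i=\lnot\leq_i$ together with the observation that $\lnot\Pi_i\lnot\theta_i=\lnot\leq$ for the product order. Your bookkeeping of the double complementation and the identification of $\Pi_i\leq_i$ with the product ordering is exactly the intended argument.
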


\begin{fact}\label{fact11}  Let $R:=\left( V,\rho,W\right)$ be an incidence
structure. Then $\gal(R)$ is finite if and only if $\gal(\lnot
R) $ is finite.
\end{fact}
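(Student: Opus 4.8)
The plan is to reduce everything to the elementary characterization of finiteness of a Galois lattice in terms of the rows of the incidence relation, which was recalled just before Fact \ref{fact7}: since every member of $\gal(R)$ is an intersection of sets of the form $R^{-1}(y)$ for $y\in W$, the lattice $\gal(R)$ is finite if and only if the family $\{R^{-1}(y):y\in W\}\subseteq \powerset(V)$ is finite. First I would make explicit why this is an ``if and only if'': the ``if'' direction is clear, since there are only finitely many intersections of a finite family of sets; for the ``only if'' direction one notes that each $R^{-1}(y)=R^{-1}_{\wedge}(\{y\})$ is itself an element of $\gal(R)$, so $\{R^{-1}(y):y\in W\}$ is a subset of $\gal(R)$ and hence finite whenever $\gal(R)$ is.

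The second step is the only substantial observation, and it is very short: for every $y\in W$ one has $(\lnot R)^{-1}(y)=V\setminus R^{-1}(y)$. Therefore complementation in $\powerset(V)$ is a bijection carrying the family $\{R^{-1}(y):y\in W\}$ onto $\{(\lnot R)^{-1}(y):y\in W\}$, so one of these two families of subsets of $V$ is finite precisely when the other is.

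Finally I would chain the equivalences: applying the first step to $R$ and then to $\lnot R$, and inserting the bijection of the second step, gives $\gal(R)$ finite $\iff$ $\{R^{-1}(y):y\in W\}$ finite $\iff$ $\{(\lnot R)^{-1}(y):y\in W\}$ finite $\iff$ $\gal(\lnot R)$ finite, which is the claim. There is essentially no obstacle here; the only point requiring a moment's care is justifying that finiteness of $\gal(R)$ forces finiteness of the row family, i.e.\ that each $R^{-1}(y)$ does lie in $\gal(R)$. (Alternatively, one could argue via the dual characterization $\gal(R)$ finite $\iff$ $\{R(x):x\in V\}$ finite together with Fact \ref{fact9}(a) applied to the identity coding, but the complementation argument on rows is the most direct.)
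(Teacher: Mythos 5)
Your proposal is correct and follows essentially the same route as the paper: both rest on the identity $(\lnot R)^{-1}(y)=V\setminus R^{-1}(y)$ together with the observation that $\gal(R)$ is finite exactly when the family $\{R^{-1}(y):y\in W\}$ is finite, the latter being the characterization recalled before Fact \ref{fact7}. Your write-up merely makes explicit the ``only if'' half of that characterization (each $R^{-1}(y)$ lies in $\gal(R)$), which the paper leaves implicit.
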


\begin{proof}
 For each $y\in V$, $(\lnot R)^{-1}(y)
=V\setminus R^{-1}(y)$. Since
$\gal(R) $ is made of intersections of sets of the form
$R^{-1}(y)$,  if $\gal (R)
$ is finite, the
collection of such sets is finite, hence the collection of sets of the form
$(\lnot R)^{-1}(y)$
 is finite too. Since $
\gal (\lnot R) $ is made of intersections of   these sets, it is
finite.
\end{proof}

\begin{fact}\label{fact12}  Let $R_{i}$ := $\left( V_{i},\rho _{i},W_{i}\right) $ be a
family of incidence structures. Then $\gal( \Pi_i R_{i}) $ embeds into
$\Pi_{i} \gal(R_{i}).$

\end{fact}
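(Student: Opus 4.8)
The plan is to exhibit, for the product incidence structure $\Pi_i R_i = (\Pi_i V_i,\ \theta,\ \Pi_i W_i)$ where $x\,\theta\,y$ means $x_i\,\rho_i\,y_i$ for all $i$, a lattice embedding of $\gal(\Pi_i R_i)$ into $\Pi_i \gal(R_i)$. Recall that a typical element of $\gal(\Pi_i R_i)$ has the form $\theta^{-1}_\wedge(\mathcal{Y})$ for some $\mathcal{Y}\subseteq \Pi_i W_i$. First I would compute $\theta^{-1}_\wedge(\mathcal{Y})$ coordinatewise: since membership in $\theta$ is the conjunction over $i$ of the $\rho_i$ conditions, one checks directly that
\begin{equation*}
\theta^{-1}_\wedge(\mathcal{Y}) = \prod_i (\rho_i)^{-1}_\wedge(\pi_i(\mathcal{Y})),
\end{equation*}
where $\pi_i\colon \Pi_i W_i \to W_i$ is the $i$-th projection. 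Indeed $x\in \theta^{-1}_\wedge(\mathcal{Y})$ iff for every $y\in\mathcal{Y}$ and every $i$ we have $x_i\,\rho_i\,y_i$, which (swapping quantifiers) says exactly that each $x_i$ lies in $(\rho_i)^{-1}_\wedge(\pi_i(\mathcal{Y}))$.

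Second, I would define the candidate map $\Phi\colon \gal(\Pi_i R_i)\to \Pi_i \gal(R_i)$ by $\Phi(G) = \big((\rho_i)^{-1}_\wedge(\pi_i(\mathcal{Y}))\big)_i$ when $G = \theta^{-1}_\wedge(\mathcal{Y})$. The coordinatewise formula above shows first that $\Phi$ is well-defined (each coordinate is genuinely a closed set in $\gal(R_i)$, and the value does not depend on the choice of $\mathcal{Y}$ representing $G$, because $G$ itself determines its product factors) and second that $\Phi$ is injective, since $G$ is recovered as the product $\prod_i \Phi(G)_i$. Order-preservation in both directions is then immediate: $G\subseteq G'$ in $\gal(\Pi_i R_i)$ iff $\prod_i\Phi(G)_i\subseteq\prod_i\Phi(G')_i$ iff $\Phi(G)_i\subseteq\Phi(G')_i$ for all $i$, i.e. $\Phi(G)\le\Phi(G')$ in the product lattice. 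Hence $\Phi$ is an order embedding, which for complete lattices is what is meant by "embeds into."

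The main (and really only) obstacle is the bookkeeping around the fact that a Galois-closed set $G\subseteq\Pi_i V_i$ need not a priori be presented as a product of subsets of the $V_i$; this is exactly what the coordinatewise computation of $\theta^{-1}_\wedge$ resolves, and once that identity is in hand everything else is formal. One should also note that the embedding need not be onto: not every tuple of closed sets $(G_i)_i$ with $G_i\in\gal(R_i)$ need have $\prod_i G_i$ closed in $\gal(\Pi_i R_i)$ — but surjectivity is not claimed. As a sanity check, this is consistent with Fact \ref{fact10} and Corollary \ref{cor10}, where the product incidence structure used for the union of relations is $(\Pi_i V_i,\lnot\Pi_i\lnot\theta_i,\Pi_i W_i)$ rather than the naive product; here we are dealing with the "meet-type" product relation $\theta=\Pi_i\theta_i$ directly, and its Galois lattice behaves well because intersections in the product are computed coordinatewise.
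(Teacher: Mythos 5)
Your proof is correct and follows essentially the same route as the paper's: both rest on the coordinatewise computation of the Galois closure in the product structure, $\theta^{-1}_\wedge(\mathcal{Y})=\prod_i(\rho_i)^{-1}_\wedge(\pi_i(\mathcal{Y}))$, so that every closed set of $\Pi_i R_i$ is a product of closed sets of the factors and the tuple of factors (equivalently, of projections, which is how the paper phrases it) gives the order embedding. The only difference is cosmetic --- the paper states the identity for a single $y$ and then intersects --- and both arguments quietly pass over the degenerate case of the empty closed set, whose product factorization is not unique.
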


\begin{proof}
Let $y:=\left( y_{i}\right)_i \in \Pi_i W_{i}.$ We have $(\Pi_i R_
{i})^{-1} (y) =\Pi_i R^{-1}_i( y_{i})$. Hence, if
$p_{j}:\Pi_i V_{i}\longrightarrow V_{j}$ denotes the $j^{th}$
projection, then for each $X\in \gal (\Pi_i R_{i}),$ we have
$\left( p_{i}\left( X\right) \right) _{i}\in \Pi_i\gal(
R_{i}).$ This defines an embedding, proving our claim. If
each member $X$ of $\gal (\Pi_i R_{i}) $ is non-empty, then
this embedding is an isomorphism.
\end{proof}

\begin{fact}\label{13}  Suppose $V_{i}=V,W_{i}=W$ for each $i\in I$ and $I$
finite. If $\gal(R_{i})$ is finite for each $i\in I,$ then $\gal ((  
V,\bigcup_i \rho _{i},W  ) )$ is finite.
\end{fact}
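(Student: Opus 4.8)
The plan is to reduce the finiteness of $\gal((V,\bigcup_i \rho_i, W))$ to the facts already established about products of incidence structures and the interaction of $\gal$ with complementation. First I would observe that by Corollary \ref{cor10} (or directly by Fact \ref{fact10}), there is a coding $(f,g)$ from $(V, \bigcup_i \rho_i, W)$ into a product incidence structure; concretely, writing $R := (V, \bigcup_i \rho_i, W)$ and $R_i := (V, \rho_i, W)$, Fact \ref{fact10} gives a coding from $R$ into $(\Pi_i V_i, \lnot \Pi_i \lnot \theta_i, \Pi_i W_i)$ once each $\rho_i$ is presented in the form needed. Here, however, $V_i = V$ and $W_i = W$ are fixed and $I$ is finite, so no genuine product over a large index set occurs; the relevant product is the finite product of $|I|$ copies, and the target incidence structure $S := (\Pi_{i\in I} V, \lnot \Pi_{i\in I}\lnot \rho_i, \Pi_{i\in I} W)$ has, by Fact \ref{fact9}(b), the property that $\gal(R)$ embeds into $\gal(S)$.

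Next I would show $\gal(S)$ is finite. By Fact \ref{fact11}, $\gal(S)$ is finite if and only if $\gal(\lnot S)$ is finite; and $\lnot S = (\Pi_i V, \Pi_i \lnot\rho_i, \Pi_i W)$ is literally the product $\Pi_{i} \lnot R_i$ of the incidence structures $\lnot R_i = (V, \lnot\rho_i, W)$. By Fact \ref{fact12}, $\gal(\Pi_i \lnot R_i)$ embeds into $\Pi_i \gal(\lnot R_i)$. Now $\gal(\lnot R_i) = \gal(\lnot(V,\rho_i,W))$ is finite for each $i$: this is exactly Fact \ref{fact11} applied to $R_i$, using the hypothesis that $\gal(R_i)$ is finite. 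Since $I$ is finite, the product $\Pi_{i\in I} \gal(\lnot R_i)$ is a finite set, hence $\gal(\lnot S)$ is finite, hence (Fact \ref{fact11} again) $\gal(S)$ is finite. Finally, $\gal(R)$ embeds into $\gal(S)$ by Fact \ref{fact9}(b), so $\gal(R)$ is finite, which is the claim.

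The only point requiring a little care — and the step I would flag as the main obstacle, though it is minor — is matching the shape of $\bigcup_i \rho_i$ to the hypothesis of Fact \ref{fact10}: that Fact takes codings $(f_i, g_i)$ from $(V, \rho_i, W)$ into structures $(V_i, \theta_i, W_i)$ and produces a coding from $(V, \bigcup_i \rho_i, W)$ into $(\Pi_i V_i, \lnot\Pi_i\lnot\theta_i, \Pi_i W_i)$. We apply it with $V_i = V$, $W_i = W$, $\theta_i = \rho_i$ and each $(f_i, g_i)$ the identity coding of $R_i$ into itself; then the target is $S = (\Pi_i V, \lnot\Pi_i\lnot\rho_i, \Pi_i W)$ as above, and $\lnot S = \Pi_i \lnot R_i$ by Fact \ref{fact9}(a). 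Alternatively one can bypass Fact \ref{fact10} entirely and argue directly: $\gal(R)$ is the set of intersections of sets $R^{-1}(y) = \bigcup_i R_i^{-1}(y)$ for $y \in W$; since each $\gal(R_i)$ is finite, the collection $\{R_i^{-1}(y) : y \in W\}$ is finite for each $i$, hence the collection $\{\bigcup_{i} R_i^{-1}(y) : y \in W\}$ is contained in the finite set $\{\bigcup_i S_i : S_i \in \{R_i^{-1}(y): y\in W\}\}$ (finiteness here uses that $I$ is finite), so there are only finitely many sets $R^{-1}(y)$, and therefore only finitely many intersections of them, i.e. $\gal(R)$ is finite. I would present this direct argument as the proof, citing Facts \ref{fact11} and \ref{fact12} for context, since it is the shortest route and makes the role of the finiteness of $I$ transparent.
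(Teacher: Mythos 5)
Your proposal is correct, and in fact contains two valid proofs. The first (coding) argument is essentially the paper's own proof: the paper also chains Fact \ref{fact10}, Fact \ref{fact9}, Fact \ref{fact12} and Fact \ref{fact11}; the only cosmetic difference is that the paper applies Fact \ref{fact11} to the source first (reducing to the finiteness of $\gal((V,\bigcap_i\lnot\rho_i,W))$ and coding that structure into $\Pi_i(V,\lnot\rho_i,W)$), whereas you apply it to the target $S$ after embedding $\gal(R)$ into $\gal(S)$ -- the logical content is identical. The direct argument you elect to present is genuinely different and more elementary: it uses only the observation, recorded in the paper's preliminaries, that $\gal(R)$ is finite iff $\{R^{-1}(y):y\in W\}$ is finite, together with $R^{-1}(y)=\bigcup_i R_i^{-1}(y)$ and the finiteness of $I$; this is complete, shorter, and makes the role of $|I|<\infty$ transparent, and it even yields the explicit bound $|\gal(R)|\leq 2^{\prod_i N_i}$ with $N_i:=|\{R_i^{-1}(y):y\in W\}|$. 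What the coding route buys in exchange is that the intermediate object $(\Pi_i V,\lnot\Pi_i\lnot\rho_i,\Pi_i W)$ and the surjectivity clause of Fact \ref{fact9}(c) are exactly what the paper reuses in the proof of Theorem \ref{injenv} to identify $\mathcal{S}_F$ with an intersection-closed subset of $\mathbf F(\mathbf{n}_0\otimes\cdots\otimes\mathbf{n}_{k-1})$; so the longer proof is not gratuitous but sets up the structural description, whereas your direct proof delivers finiteness alone.
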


\begin{proof}
\ According to Fact \ref{fact11}, this amounts to prove that $\gal \left(\left(
V,\bigcap_i \lnot \rho _{i},W\right) \right) $ is finite. 
Fact \ref{fact10} yields that $\left( V,\bigcup  \rho _{i},W\right)$ has a coding into $(V^V,  \lnot \Pi_i \lnot\rho
_{i},W^W)$. According to $(a)$ of Fact \ref{fact9}, $\left(V,\cap  \lnot \rho _{i},W\right)$ has a coding into $(V^V,\Pi_i\lnot\rho
_{i},W^W)\equiv \Pi_i   ( V,\lnot \rho _{i},W)$.  According to $(b)$ of 
Fact \ref{fact9}, $\gal \left(
\left( V,\bigcap_i \lnot \rho _{i},W\right) \right) $ embeds into
$\gal \left( \Pi_i \ \left( V,\lnot \rho _{i},W\right) \right),$which in
turns embeds into $\Pi_i \gal\left( V,\lnot \rho _{i},W\right) $ from
Fact \ref{fact12}. From Fact \ref{fact11}, each $\gal \left( V,\lnot \rho _{i},W\right)$ is
finite. The result follows.
\end{proof}

\subsection{Languages and their Galois lattices}
 Galois lattices arose from group theory and geometry. We show here how they interact with language theory.
 
 We represent subsets of the free monoid  by incidence structures as follows. 

Let $A$ be an alphabet and  $L$ be a subset of $A^{\ast }$. Denoting by $xy$ the concatenation of the words $x, y \in A^*$,  define a  binary  relation  $\rho
_{L}$ on $A^{\ast }$ by:

            $$x\rho _{L}y\Longleftrightarrow xy\in L$$

and  set $R_{L}:= (A^*, \rho_{L}, A^*)$.

We mention without proof some properties of this  association.

It  preserves Boolean operations,  that is:
\begin{fact}\label{fact:complement}
\begin{enumerate}
\item $\rho_{A^*\setminus L}= A^*\times A^*\setminus \rho_L$. Hence, $R_{A^*\setminus L}= \neg R_{L}$. 

\item $\rho_{\bigcup_i L_i}= \bigcup_{i} \rho_{L_i}$ and  $\rho_{\bigcap_i L_i}= \bigcap_{i} \rho_{L_i}$
\end{enumerate}
\end{fact}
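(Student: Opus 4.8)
The plan is to prove Fact~\ref{fact:complement} directly from the definitions of $\rho_L$ and $R_L$, unwinding everything down to membership of concatenations in the languages involved. Since both assertions are stated for arbitrary languages, no structural hypothesis (final segment, reflexivity, etc.) is needed; the argument is a chain of logical equivalences.

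First I would handle part (1). Fix $x,y\in A^{\ast}$. By definition, $x\,\rho_{A^{\ast}\setminus L}\,y$ holds iff $xy\in A^{\ast}\setminus L$, i.e.\ iff $xy\notin L$, i.e.\ iff $\lnot(x\,\rho_L\,y)$, i.e.\ iff $(x,y)\in A^{\ast}\times A^{\ast}\setminus\rho_L$. Hence $\rho_{A^{\ast}\setminus L}=A^{\ast}\times A^{\ast}\setminus\rho_L$. By the definition of $\lnot R$ applied to $R_L=(A^{\ast},\rho_L,A^{\ast})$, the incidence structure $\lnot R_L$ is precisely $(A^{\ast},\,A^{\ast}\times A^{\ast}\setminus\rho_L,\,A^{\ast})$, which by the equality just established is $(A^{\ast},\rho_{A^{\ast}\setminus L},A^{\ast})=R_{A^{\ast}\setminus L}$. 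This gives $R_{A^{\ast}\setminus L}=\lnot R_L$.

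Next I would handle part (2). Again fix $x,y\in A^{\ast}$. For the union, $x\,\rho_{\bigcup_i L_i}\,y$ holds iff $xy\in\bigcup_i L_i$, iff there is some $i$ with $xy\in L_i$, iff there is some $i$ with $x\,\rho_{L_i}\,y$, iff $(x,y)\in\bigcup_i\rho_{L_i}$; so $\rho_{\bigcup_i L_i}=\bigcup_i\rho_{L_i}$. The intersection case is identical with ``there is some $i$'' replaced by ``for all $i$'': $xy\in\bigcap_i L_i$ iff $xy\in L_i$ for every $i$ iff $x\,\rho_{L_i}\,y$ for every $i$ iff $(x,y)\in\bigcap_i\rho_{L_i}$.

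There is really no obstacle here: the only thing to be careful about is that the map $L\mapsto\rho_L$ is literally defined by pulling back the single set $L$ along the concatenation map $A^{\ast}\times A^{\ast}\to A^{\ast}$, $(x,y)\mapsto xy$, and preimage commutes with arbitrary Boolean operations, which is exactly what the three equivalences above record. One small point worth stating explicitly is that the passage from $\rho_{A^{\ast}\setminus L}=A^{\ast}\times A^{\ast}\setminus\rho_L$ to $R_{A^{\ast}\setminus L}=\lnot R_L$ uses only the notational convention, introduced just before Fact~\ref{fact7}, that $\lnot R$ denotes the incidence structure with the complemented relation on the same vertex sets; so the first sentence of (1) and its reformulation $R_{A^{\ast}\setminus L}=\lnot R_L$ are two ways of saying the same thing.
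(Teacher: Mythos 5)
Your proof is correct, and it is the straightforward unwinding of the definition $x\,\rho_L\,y \Leftrightarrow xy\in L$ (i.e.\ that $\rho_L$ is the preimage of $L$ under concatenation, so Boolean operations are preserved); the paper in fact states this Fact without proof, explicitly calling it a property mentioned ``without proof,'' so your argument supplies exactly the routine verification the authors omitted.
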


Not every binary relation $\rho$ on $A^*$ can be  of the form $\rho_L$. For an example, if $\rho$ contains a pair $(u, v)$ with $u,v\in A^*$, then $\rho$ contains  $\rho_{\{w\}}$  where $w:=uv$. In fact, if $\rho$ is a binary relation on $A^*$, set $\pi_{\rho}:= \{uv: (u,v)\in \rho\}$; then $\rho_{\pi_{\rho}}$
is the least binary relation of the form $\rho_L$ containing $\rho$. In particular, $\rho_{\{w\}}$ is the least relation of the form $\rho_L$ containing a pair $(u,v)$ such that $w= uv$.   

In the Boolean lattice made of relations of the form $\rho_L$, the atoms are of the form $\rho_{\{w\}}$ where   $w$ is any word in $ A^{*}$. They form a partition  of $A^*\times A^*$. Every binary relation  of the form $\rho_L$ is an union of some blocks of this partition. 

We will say more about this association in Section \ref{ferrers relation}.

We conclude this subsection by the following property. 

 If $B$ is a subset of $A^*$,  we set $R_{L}{\restriction  B}:=(B,\rho_L \cap B\times B, B)$.

\begin{fact}\label{fact8}  The Galois lattices of  $R_{L}$ and and $R_{L}{\restriction  A^{\ast }\setminus L}$  are isomorphic
provided that $L$ is a final segment of $A^{\ast }.$
\end{fact}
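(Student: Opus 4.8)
*The Galois lattices of $R_{L}$ and $R_{L}\!\restriction A^{\ast}\setminus L$ are isomorphic provided that $L$ is a final segment of $A^{\ast}$.*

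The plan is to exhibit an explicit order-isomorphism between $\gal(R_L)$ and $\gal(R_L\!\restriction B)$, where $B:=A^{\ast}\setminus L$, by restriction of the closed sets of $R_L$ to $B$. Recall that $\gal(R_L)$ consists of the sets $(R_L)^{-1}_{\wedge}(Y)=\bigcap_{y\in Y}Ly^{-1}$ for $Y\subseteq A^{\ast}$, ordered by inclusion. The first observation is that no such nonempty closed set $X\in\gal(R_L)$, other than the top element $A^{\ast}$ itself, meets $L$: indeed the empty word $\Box$ satisfies $\Box\,\rho_L\,y$ iff $y\in L$, so if $X=(R_L)^{-1}_{\wedge}(Y)$ contains some $x\in L$, then since $L$ is a final segment and $x\leq xy'$, for every $y'$ we have... more precisely, I would argue that for a final segment $L$ the relation $\rho_L$ satisfies $u\leq u'$ and $u\,\rho_L\,v$ imply $u'\,\rho_L\,v$ (because $u\leq u'$ gives $uv\leq u'v$ and $L$ is final), and symmetrically on the right. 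Consequently each column $R_L^{-1}(y)=Ly^{-1}$ is itself a final segment of $A^{\ast}$, hence so is every $X\in\gal(R_L)$, being an intersection of final segments. So the elements of $\gal(R_L)$ are precisely certain final segments of $A^{\ast}$.

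The key step is then to show that the map $\varphi:\gal(R_L)\to\gal(R_L\!\restriction B)$, $X\mapsto X\cap B$, is a well-defined order-isomorphism. That it lands in $\gal(R_L\!\restriction B)$ and is order-preserving is routine from the definitions; the substance is injectivity and surjectivity. For injectivity I would show that each $X\in\gal(R_L)$ is recovered from $X\cap B$ as the final-segment closure (equivalently, the up-closure in the Higman order) of $X\cap B$ inside $A^{\ast}$ — using that $X$ is a final segment and that $X\cap L$, if nonempty, is already "forced" by $X\cap B$ via the final-segment property. One shows: if $x\in X$ and $x\in L$, then because $X=\bigcap_{y\in Y}Ly^{-1}$ and each $Ly^{-1}$ is a final segment containing $\Box$ whenever $y\in L$... the cleanest route is: $x\in X\cap L$ forces $\Box\in X$ (since $\Box\leq x$ and $X$ final and... no — rather one uses that $X$ being a nonempty intersection of sets $Ly^{-1}$ with $L$ final forces, once $X\cap L\neq\emptyset$, that $X=A^{\ast}$, whose trace on $B$ is $B$, the top of $\gal(R_L\!\restriction B)$). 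Thus distinct closed sets, being distinct final segments all of which either equal $A^{\ast}$ or are contained in $B$, have distinct traces on $B$. For surjectivity, given a closed set $Z$ of $R_L\!\restriction B$, one takes its up-closure $\widehat Z$ in $A^{\ast}$ and checks $\widehat Z\in\gal(R_L)$ with $\widehat Z\cap B=Z$, again using that $\rho_L$ is "monotone" in the sense above so that $R_L$-closures and Higman up-closures interact coherently.

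The main obstacle I anticipate is the careful bookkeeping around elements of $X\cap L$: one must verify that restricting to $B$ loses no information, i.e. that a closed set of $R_L$ is determined by which words \emph{outside} $L$ it contains. The crucial case to pin down is whether a closed set can meet $L$ nontrivially without being all of $A^{\ast}$; once it is established that $L$ being a final segment makes every column $Ly^{-1}$ a final segment, and that $w\in L$ entails $\Box\in Lw^{-1}$, the argument that $X\cap L\neq\emptyset\Rightarrow X=A^{\ast}$ — hence $\varphi$ separates all nontrivial closed sets — should go through, and the rest is the standard Galois-correspondence verification that restriction to $B$ and up-closure in $A^{\ast}$ are mutually inverse lattice maps.
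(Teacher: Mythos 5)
Your proof has a genuine error at its core: the ``first observation'' that no nonempty closed set of $R_L$ other than $A^{\ast}$ meets $L$ is false, and in fact the exact opposite holds. Since $L$ is a final segment and $x\leq xy$ for all $y$, every $x\in L$ satisfies $x\rho_L y$ for \emph{every} $y\in A^{\ast}$; equivalently $L\subseteq Ly^{-1}$ for every $y$, so \emph{every} closed set $R^{-1}_{\wedge}(Y)=\bigcap_{y\in Y}Ly^{-1}$ contains all of $L$. A concrete counterexample to your claim ``$X\cap L\neq\emptyset\Rightarrow X=A^{\ast}$'': take $A=\{a\}$, $L=\uparrow a^{2}$, $Y=\{a\}$; then $La^{-1}=\{a^{n}:n\geq 1\}$ meets $L$ but is not $A^{\ast}$. (You can also see it in the paper's Figure~1, where every element of $\gal(R_F)$ contains $F$.) Consequently your injectivity argument (``distinct final segments all of which either equal $A^{\ast}$ or are contained in $B$'') collapses, and your proposed inverse map --- Higman up-closure of a trace on $B$ --- also fails: the bottom closed set of $R_L$ is $L$ itself, whose trace on $B$ is empty, and the up-closure of $\emptyset$ is not $L$.

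The correct mechanism, which is what the paper's one-line proof records, is that rows and columns indexed by elements of $L$ are \emph{full}: $R(x)=A^{\ast}$ for $x\in L$ and $R^{-1}(y)=A^{\ast}$ for $y\in L$. Hence $R_{\wedge}(X)=R_{\wedge}(X\setminus L)$ and $R^{-1}_{\wedge}(Y)=R^{-1}_{\wedge}(Y\setminus L)$ for all $X,Y$, so deleting $L$ from both coordinates changes nothing: the closed sets of $R_L$ are exactly the sets $L\cup Z$ with $Z$ closed in $R_L\restriction B$, and $X\mapsto X\setminus L$ (with inverse $Z\mapsto Z\cup L$) is the desired isomorphism. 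Your intermediate remark that each $Ly^{-1}$ is a final segment is correct but is not what drives the argument; the driving fact is $L\subseteq Ly^{-1}$, which you had within reach (you note $w\in L$ entails $\Box\in Lw^{-1}$) but applied in the wrong direction.
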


\begin{proof}
Observe \ that for every subset $X$ of $A^{\ast }$, we have:
$R^{-1}_{\wedge}
(R_{\wedge}( X)) =R^{-1}_{\wedge}
(R_{\wedge}( X\setminus L)).$
\end{proof}

\vspace{.5cm}

\subsection{Proof of the first part of Theorem \ref{injenv}}

 From Corollary \ref {cor10}, and $c)$ of Fact \ref{fact9},  in order to prove the first part of 
Theorem \ref{injenv}, namely that $\mathcal S_F$ can be identified to an intersection closed subset of  the set
$\mathbf F(\mathbf{n}_{0}\otimes \cdots \otimes \mathbf{n}_{k-1})$,  all  that we need is to prove that for each $i$, there is a
coding $\left( f_{i},g_{i}\right),$ with $f_{i}$ surjective, from
$\left( A^{\ast },\rho
_{Fi},A^{\ast }\right) $ into $\left( \mathbf{n}_{i},\lnot \leq,\mathbf{n}%
_{i}\right)$. But, this is false. From Fact \ref{fact8}, a coding from $\left( A^{\ast
}\setminus F,\rho _{Fi},A^{\ast }\setminus F\right) $ into $\left( \mathbf{n}%
_{i},\lnot \leq,\mathbf{n}_{i}\right) $ suffices. Let
$X_{0},...,X_{n-1}$ be non-empty subsets of $A$ and let 
$X:=X_{0}\cdot\cdot \cdot X_{n-1}$
(that is the set of words $x_{0}\cdots x_{n-1}$ with $x_{i}\in X_{i})$ and let 
$F:=\uparrow X$.  Let $\mathbf{n}:=\left\{ 0,...,n-1\right\}$ be equipped with
the natural ordering. Let $f:A^{\ast }\setminus F\longrightarrow $ $\mathbf{n%
}$ and $g:A^{\ast }\setminus F\longrightarrow $ $\mathbf{n}$
defined as follows: For $v\in A^{\ast }\setminus F,$ if there is
$m\in \N$ such that $v\in \uparrow (
X_{0}\cdot\cdot \cdot X_{m-1})$ then $f\left( v\right) $ is the
largest $m$ having this property, otherwise $f(v)=0$.  For $w\in
A^{\ast }\setminus F$,  if there is $p\in \N$ such that $v\in
\uparrow \left\{ X_{p-1}\cdot \cdot \cdot X_{n-1}\right\}$ then $g\left(
v\right) $ is the least $p$ having this property, otherwise
$g(v)=n-1.$
By a straightforward verification, we have the following:

\begin{fact} \label{fact14} The map $f$ is surjective and the pair $(f,g)$
form a coding from\\
 $\left( A^{\ast }\setminus F,\rho
_{F},A^{\ast }\setminus F\right) $ into $\left( \mathbf{n},\lnot
\leq,\mathbf{n}\right).$
\end{fact}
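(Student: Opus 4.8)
The plan is to reduce Fact \ref{fact14} to one combinatorial statement about the Higman ordering, a factorization lemma: for all $v,w\in A^{\ast}$ one has $vw\in F=\uparrow(X_{0}\cdots X_{n-1})$ if and only if there is a split index $m\in\{0,\dots,n\}$ with $v\in\uparrow(X_{0}\cdots X_{m-1})$ and $w\in\uparrow(X_{m}\cdots X_{n-1})$. To prove this I would take an increasing injection $h$ witnessing $X_{0}\cdots X_{n-1}\le vw$ and let $m$ be the number of pattern positions that $h$ sends into the $v$-part of $vw$; since $h$ is increasing these are exactly the first $m$ positions, so the first $m$ blocks embed inside $v$ and the last $n-m$ inside $w$. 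The converse is immediate by juxtaposing the two partial embeddings, and because each $X_{i}$ is a single block the letters on the two sides may be chosen independently, so the two halves do not interact.

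Next I would read off $f$ and $g$ as match-lengths. By definition $f(v)$ is the largest $m$ with $v\in\uparrow(X_{0}\cdots X_{m-1})$, i.e.\ the length $\lambda(v)$ of the longest prefix of the pattern that embeds into $v$; as $v\notin F$ this lies in $\{0,\dots,n-1\}$. Dually, letting $\mu(w)$ be the length of the longest suffix $X_{n-k}\cdots X_{n-1}$ of the pattern that embeds into $w$, the value $g(w)$ is $n-1-\mu(w)$; equivalently $g(w)+1$ is the least index $m$ with $w\in\uparrow(X_{m}\cdots X_{n-1})$, and again lies in $\{0,\dots,n-1\}$ because $w\notin F$. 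With these descriptions the factorization lemma says precisely that $vw\in F$ iff some $m$ satisfies both $m\le f(v)$ and $w\in\uparrow(X_{m}\cdots X_{n-1})$; since the latter indices form the up-set $\{g(w)+1,\dots,n\}$, such an $m$ exists iff $g(w)+1\le f(v)$, that is iff $f(v)>g(w)$. On the chain $\mathbf{n}$ this is exactly $f(v)\,(\lnot\le)\,g(w)$, so $(f,g)$ is a coding from $(A^{\ast}\setminus F,\rho_{F},A^{\ast}\setminus F)$ into $(\mathbf{n},\lnot\le,\mathbf{n})$. I emphasize the sign: the target carries the complement of the order, so the equivalence to be proved is $vw\in F\Leftrightarrow f(v)\not\le g(w)$, and not $f(v)\le g(w)$.

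Surjectivity of $f$ is then immediate: for each $m\in\{0,\dots,n-1\}$ choose $a_{i}\in X_{i}$ (possible since every $X_{i}\neq\emptyset$) and put $v:=a_{0}\cdots a_{m-1}$, the empty word when $m=0$. Having length $m<n$, this $v$ cannot dominate the length-$n$ pattern, so $v\in A^{\ast}\setminus F$, while it embeds $X_{0}\cdots X_{m-1}$ and, by length, no longer prefix; hence $f(v)=m$. The hard part is the factorization lemma: one must argue carefully that a witnessing increasing injection splits at the concatenation point into a prefix block and a suffix block, and that the nonemptiness of the $X_{i}$ lets the two sides be matched independently. Once that is secured, the match-length descriptions of $f$ and $g$ yield the strict inequality $f(v)>g(w)$, giving the coding with the correct $\lnot\le$ sign, and the short pattern-prefix words close Fact \ref{fact14}.
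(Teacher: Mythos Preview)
The paper offers no proof of Fact \ref{fact14}; it simply declares it ``a straightforward verification.'' Your argument supplies exactly that verification and is correct. The factorization lemma---that an embedding of $X_0\cdots X_{n-1}$ into a concatenation $vw$ splits at the boundary into a prefix embedding into $v$ and a suffix embedding into $w$, and conversely---is the only idea needed, and your proof of it (split the witnessing increasing injection at the first position landing in $w$) is the standard one. From it the equivalence $vw\in F\Leftrightarrow f(v)>g(w)\Leftrightarrow f(v)\,(\lnot\le)\,g(w)$ follows just as you wrote, and the short prefix words $a_0\cdots a_{m-1}$ with $a_i\in X_i$ give surjectivity of $f$ immediately.

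One small remark: your normalisation $g(w)=n-1-\mu(w)$, equivalently ``$g(w)+1$ is the least $m$ with $w\in\uparrow(X_m\cdots X_{n-1})$,'' is the version that makes the coding land in $\mathbf n=\{0,\dots,n-1\}$ and yields the correct strict inequality $f(v)>g(w)$. The paper's literal definition of $g$ (least $p$ with $w\in\uparrow(X_{p-1}\cdots X_{n-1})$) appears to carry an off-by-one slip; you have implicitly repaired it, and your repaired version is the one the argument requires.
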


This proves the first part of  Theorem \ref{injenv}.

\subsection{Proof of the second part of  Theorem \ref{injenv}} 

Let $F:=F_{0}\cup ...\cup F_{k-1}$,  let $X_{i}\subseteq F$ such that $%
F_{i}=\uparrow X_{i}$ and let $X_{i_{0}},...,X_{i_{n_{i}-1}}\subseteq A$
such that $X_{i_{0}}\cdot\cdots \cdot X_{i_{n_{i}-1}}=X_{i}.$ From Fact \ref{fact14} above, for
each $i,$ we have a coding $(f_{i},g_{i})$ from $\left( A^{\ast }\setminus
F_{i},\rho _{Fi},A^{\ast }\setminus F_{i}\right) $ into $\left( \mathbf{n}%
_{i},\lnot \leq,\mathbf{n}_{i}\right) $. The restrictions of $f_{i}$ and  $%
g_{i}$ to $A^{\ast }\setminus F$ give a coding that we still denote $%
(f_{i},g_{i})$ from $\left( A^{\ast }\setminus F_{i},\rho _{Fi},A^{\ast
}\setminus F_{i}\right) $ into $\left( \mathbf{n}_{i},\lnot \leq,\mathbf{n}%
_{i}\right) $ and $f_{i}$ is still surjective. From Fact \ref{fact10}, $\left( \Pi
f_{i},\Pi g_{i}\right) $ is a coding from $R_{F}:=\left( A^{\ast }\setminus
F,\cup \rho _{Fi},A^{\ast }\setminus F\right) $ into $R^{\prime }:=\left(
\mathbf{n}_{0}\otimes ...\otimes \mathbf{n}_{k-1},\lnot \leq,\mathbf{n}%
_{0}\otimes ...\otimes \mathbf{n}_{k-1}\right) $ where $\leq $ is the
natural ordering on the direct product $\mathbf{n}_{0}\otimes ...\otimes
\mathbf{n}_{k-1}.$ Since for each $i,f_{i}$ is surjective,  $\Pi $ $f_{i}$ is
surjective and from $(c)$ of Fact \ref{fact9}, $\gal \left( R_{F}\right) $ is identified with an
intersection closed subsets of $\gal \left( R^{\prime }\right) $ which is $%
\mathbf F\left( \mathbf{n}_{0}\otimes ...\otimes \mathbf{n}_{k-1}\right) $
by Fact \ref{fact7}. $\Box$
\subsection{An explicit isomorphism} For reader's convenience, let us
describe explicitly the
isomorphism between $\mathcal{S}_{F}$ and an intersection closed subset  of $%
\mathbf F\left( \mathbf{n}_{0}\otimes ...\otimes \mathbf{n}_{k-1}\right)
.$ For sake of simplicity, let us suppose that $F$ is finitely generated (which is the case if $A$ is w.q.o.).  Let $u_0, \dots, u_{k-1}\in A^*$, $n_0:= \vert u_0\vert, \dots, n_{k-1}:= \vert u_{k-1}\vert$, and  $F:= \uparrow \left\{
u_{0},...,u_{k-1}\right\} .$ Let $v\in A^{\ast };$ for each $i\in
\left\{ 0,...,k-1\right\},$ let $v''_{i}$ be the largest suffix of
$u_{i}$ such that $v''_{i}$ $\leq v$ and let $v_{i}^{\prime }$ be
the unique prefix of $u_{i}$ such that $u_{i}=v_{i}^{\prime }$
$v''_{i}.$ Set $s(v):=\{x:=\left(
x_{0},...,x_{k-1}\right) \in \mathbf{n}_{0}\otimes ...\otimes \mathbf{n}%
_{k-1}$ such that $x_{i}<\left| v_{i}\right| $ for all $i\}$ and $\tau
\left( v\right) $ $:=$ $\mathbf{n}_{0}\otimes ...\otimes \mathbf{n}%
_{k-1}\setminus $ $s(v).$\\
Define $\varphi :\mathcal{S}_{F}\longrightarrow \mathbf F\left( \mathbf{n}%
_{0}\otimes ...\otimes \mathbf{n}_{k-1}\right) $ by setting

            $$\varphi \left( X\right) :=\underset{v\in
A^{\ast }, X\subseteq \rho ^{-1}(v)}{\bigcap \tau \left( v\right)
}.$$\\
For $w:=w_{0}...w_{m}\in A^{\ast }$ and $\ell <\left| w\right|,$ let $%
w_{\mid \ell }=w_{0}...w_{\ell -1}$ be the restriction of $w$ to the first $%
\ell $ letters. Let $x:=\left( x_{0},...,x_{k-1}\right) \in $ $\mathbf{n}%
_{0}\otimes ...\otimes \mathbf{n}_{k-1},$ let $\mu \left(
x\right)=\uparrow \left\{ u_{i\mid x_{i-1}}:0\leq i\leq k-1\right\} $ be the final segment of $%
A^{\ast }$ generated by words of the form $u_{i\mid x_{i-1}}.$

Define $\Psi :\mathbf F\left( \mathbf{n}_{0}\otimes ...\otimes \mathbf{n}%
_{k-1}\right) \longrightarrow \mathcal{S}_{F}$ \ by setting

$$         \Psi \left( Y\right) :=\underset{x\in \mathbf{n}%
_{0}\otimes ...\otimes \mathbf{n}_{k-1}\setminus Y}{\bigcap \mu
\left( v\right) }$$\\
The maps $\varphi $ and $\Psi $ preserve intersections and $\Psi
\circ \varphi =id_{\mathcal S_{F}}.$

\subsection{Finiteness of the injective envelope}

\begin{proposition}\label{prop:finitenesswords}
Let $F$ be a final segment of $A^{\ast }$. The following conditions
are equivalent:

\begin{enumerate} [(i)]
\item  The injective envelope $\mathcal{S}_{F}$ is
finite; 
\item  $F$ is finite union $F=F_{0}\cup ...\cup
F_{i}\cup ...\cup F_{k-1}$ of final segments, each $F_{i}$
generated by a set $X_{i}$
of words $u_{i}$ of the same length $n_{i}$ all of the  form $%
u_{i}=u_{i_{0}}...u_{i_{j}}...u_{i_{n_{i}-1}}$ with $u_{i_{j}}\in
X_{i_{j}}\subseteq A.$

\end{enumerate}
\end{proposition}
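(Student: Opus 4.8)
The plan is to prove the equivalence $(i)\Longleftrightarrow(ii)$ by relating the finiteness of $\mathcal{S}_F$ to the finiteness of the Galois lattice $\gal(R_F)$ and then to a combinatorial condition on the generating set of $F$. Recall from Section \ref{section heyting} that $\mathcal{S}_F$ is (isometric to) $\gal(R_F)$ where $R_F=(A^*,\rho_F,A^*)$, so $\mathcal{S}_F$ is finite iff $\gal(R_F)$ is finite. By the basic facts on Galois lattices recalled in Section 5, $\gal(R_F)$ is finite iff the family of sets $R_F^{-1}(v)=Fv^{-1}$ for $v\in A^*$ is finite, equivalently (using the dual isomorphism $\gal(R_F)\cong\gal(R_F^{-1})^{\partial}$) iff the family of sets $R_F(u)=u^{-1}F$ for $u\in A^*$ is finite.

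For the direction $(ii)\Longrightarrow(i)$, I would argue as follows. Assume $F=F_0\cup\cdots\cup F_{k-1}$ with each $F_i=\uparrow X_i$ and each $X_i$ a set of words of common length $n_i$ whose $j$-th letters lie in a fixed subset $X_{i_j}\subseteq A$. Then $F_i=\uparrow(X_{i_0}\cdots X_{i_{n_i-1}})$, so by Fact \ref{fact14} there is a coding of $(A^*\setminus F_i,\rho_{F_i},A^*\setminus F_i)$ into $(\mathbf{n}_i,\lnot\leq,\mathbf{n}_i)$; since $\gal(\mathbf{n}_i,\lnot\leq,\mathbf{n}_i)=\mathbf F(\mathbf{n}_i)$ is finite, $\gal(R_{F_i})$ is finite by Facts \ref{fact9} and \ref{fact11}. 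Since $\rho_F=\bigcup_i\rho_{F_i}$ by Fact \ref{fact:complement} and the index set is finite, Fact \ref{13} gives that $\gal(R_F)$ is finite, hence $\mathcal{S}_F$ is finite. (Alternatively this is exactly the content of the second part of Theorem \ref{injenv} just proved: $\mathcal{S}_F$ embeds as an intersection-closed subset of the finite lattice $\mathbf F(\mathbf{n}_0\otimes\cdots\otimes\mathbf{n}_{k-1})$.)

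For the converse $(i)\Longrightarrow(ii)$, I would start from the finiteness of $\gal(R_F)$, which means the collection $\{u^{-1}F:u\in A^*\}$ of left residuals of $F$ is finite. Each $u^{-1}F$ is again a final segment of $A^*$. The key step is to show that finiteness of this family forces $F$ to be generated by finitely many words, and moreover that one can organize a finite generating basis of $F$ into groups of equal-length words with the prescribed ``letterwise" structure. Concretely, I expect the argument to run: finitely many residuals implies that the set of minimal words of $F$ (its basis) is an antichain with bounded length and bounded alphabet support, hence finite; then, grouping the basis words by length, $F=\bigcup_{n}\uparrow\{u\in \text{basis}:|u|=n\}$ is already a finite union; finally, for each length $n$ and each choice of a position-profile $(X_{i_0},\ldots,X_{i_{n-1}})$ of singleton alphabet-constraints realized by basis words, one splits $\uparrow\{u : |u|=n\}$ further into pieces of the form $\uparrow(X_{i_0}\cdots X_{i_{n-1}})$ — but since a single word already has the form $u=u_{i_0}\cdots u_{i_{n_i-1}}$ with each $u_{i_j}\in\{u_{i_j}\}\subseteq A$, every finite union of $\uparrow u_i$ trivially has the form in $(ii)$ by taking each $X_i$ to be a single word. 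So the only real content of $(i)\Rightarrow(ii)$ is that finiteness of $\gal(R_F)$ forces $F$ to be finitely generated.

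The main obstacle, therefore, is proving that if $\gal(R_F)$ is finite then $F$ has a finite basis. I would establish this by contradiction: if $F$ had an infinite basis (an infinite antichain of $\subseteq$-minimal words, or minimal words of unbounded length), one would construct infinitely many distinct left residuals $u^{-1}F$ — e.g. for minimal words $w_1,w_2,\ldots$ of $F$, suitable prefixes $p$ of the $w_n$ would satisfy $p^{-1}F\ni (\text{corresponding suffix})$ but exclude the suffixes of the other $w_m$, since $F$ is a final segment and the $w_n$ form an antichain. Controlling this requires care with the Higman ordering (a prefix of one basis word may be comparable to another basis word), so the cleanest route may be to phrase it via Fact \ref{fact11} and Fact \ref{fact8} and invoke directly that a final segment of $A^*$ with $\gal(R_F)$ finite has only finitely many ``types" of tails, which pins down its minimal elements; I would then assemble $(ii)$ by grouping these finitely many minimal words by length as indicated above.
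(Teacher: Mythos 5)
Your direction $(ii)\Rightarrow(i)$ is fine and coincides with the paper's (which simply invokes Theorem \ref{injenv}). The problem is in $(i)\Rightarrow(ii)$: you reduce the whole implication to the claim that finiteness of $\gal(R_F)$ forces $F$ to have a finite basis, and that claim is false. Take $A$ to be an infinite antichain (with trivial involution) and $F:=\uparrow A=A^*\setminus\{\Box\}$. Then $u^{-1}F=A^*$ for every $u\neq\Box$ and $\Box^{-1}F=F$, so there are only two left residuals and $\gal(R_F)$, hence $\mathcal{S}_F$, has exactly two elements; yet the set of minimal words of $F$ is the infinite antichain $A$, so $F$ is not finitely generated. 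This also kills the contradiction argument you sketch at the end: an infinite antichain of minimal words need not produce infinitely many distinct residuals. The point you miss is that condition $(ii)$ is genuinely weaker than finite generation, because the sets $X_{i_j}\subseteq A$ are allowed to be infinite (in the example, $F=\uparrow X_0$ with $X_0=X_{0_0}=A$ satisfies $(ii)$ with $k=1$, $n_0=1$). Reducing to singleton $X_{i_j}$'s, as you propose, throws away exactly the freedom that makes the statement true over a non-w.q.o.\ alphabet; over a finite (or w.q.o.) alphabet your reduction is vacuously fine by Higman's theorem, but the proposition is precisely meant to cover the general case (cf.\ Theorem \ref{thm:w.q.o.}).

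The paper's proof of $(i)\Rightarrow(ii)$ takes a different, automata-theoretic route that produces the possibly infinite letter-sets directly: since $\mathcal{S}_F$ is finite, the reflexive involutive automaton $\mathcal{A}_F$ has only finitely many paths $s=(s_0,\dots,s_n)$ with pairwise distinct states from $x=A^*$ to $y=F$; for each such path one sets $X_{s_j}:=\{a\in A:(s_j,a,s_{j+1})\in T_F\}$ and $F_s:=\uparrow(X_{s_0}\cdots X_{s_{n-1}})$, and checks $F=\bigcup_s F_s$. Each $F_s$ has exactly the shape required in $(ii)$, with $X_{s_j}$ as large as the transition structure dictates. If you want to keep your Galois-lattice framing, you would need to extract the decomposition from the finitely many residuals themselves (essentially reconstructing the transition sets between residual classes), not from a finite basis of $F$.
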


\begin{proof}
$(i) \Rightarrow (ii)$. Let us suppose that $%
\mathcal{S}_{F}$ is finite. Let $\mathcal{A}_{F}$ be the corresponding reflexive and involutive 
automaton with an initial state $x:=$ $A^{\ast }$and a final state $y:=F$. Let
$Q^{*}_{F}$ be the set of finite sequences $s:=\left( s_{0},...,s_{n}\right) $
such that all $s_{i}$ are distinct states, $s_{0}:=x,s_{n}:=y,\left(
s_{i},a_i,s_{i+1}\right) \in T_{F}$ \ for some letter $a_i$. Since $\mathcal{S}_{F}
$ is finite this set is finite too. For each $s=\left(
s_{0},...,s_{n}\right) $ let $X_{s_{j}}:=\left\{ a\in A:\left(
s_{j},a,s_{j+1}\right) \in T_{F}\right\} $ and let $X_{s}:=%
\{a_{0}...a_{n-1}:a_{j}\in X_{s_j}\}$ and let $ F_{s}:=\uparrow
X_{s}$. It is easy to check that $F=\underset{s\in
Q_{F}}{\bigcup }F_{s},$ hence has the form mentioned above. 

\noindent $(ii) \Rightarrow (i)$. Apply Theorem \ref{injenv}.
\end{proof}

\begin{theorem}\label{thm:w.q.o.}
Let $\mathcal H :=\mathbf F(A^{\ast })$ be the Heyting algebra made of final segments of $%
A^{\ast }.$ The following conditions are equivalent:
\begin{enumerate}[(i)]
\item $A$ is well-quasi-ordered;
\item The injective envelope $\mathcal{N}\left(
E\right) $ of every finite metric space $E$ is finite.
\end{enumerate}
\end{theorem}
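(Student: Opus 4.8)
The plan is to prove the two implications separately, in each case reducing to the two-element situation by Proposition~\ref{prop:finiteness} and then applying the combinatorial criterion of Proposition~\ref{prop:finitenesswords}. Recall that a two-element metric space over $\mathcal H=\mathbf F(A^{\ast})$ is, up to isomorphism, a pair $\{x,y\}$ determined by the final segment $F:=d(x,y)$, and that its injective envelope is $\mathcal S_{F}$.

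For $(i)\Rightarrow(ii)$ I would argue as follows. If $A$ is w.q.o., then by Higman's theorem~\cite{higman} $A^{\ast}$ is w.q.o., so every final segment $F$ of $A^{\ast}$ is generated by finitely many words $u_{0},\dots,u_{k-1}$; writing $u_{i}=a^{i}_{0}\cdots a^{i}_{n_{i}-1}$, the generating set $\{u_{i}\}$ has the form required in Proposition~\ref{prop:finitenesswords}(ii) (take the singleton letter-sets $X_{i_{j}}:=\{a^{i}_{j}\}$), so $\mathcal S_{F}$ is finite; equivalently, one may simply invoke Theorem~\ref{injenv}. Hence the injective envelope of every two-element metric space over $\mathcal H$ is finite, and Proposition~\ref{prop:finiteness} upgrades this to all finite metric spaces.

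For $(ii)\Rightarrow(i)$ I would argue by contraposition. If $A$ is not w.q.o., then $A$ carries either an infinite strictly descending sequence or an infinite antichain, and in either case one may fix a sequence $(a_{n})_{n\ge1}$ in $A$ with $a_{i}\not\le a_{j}$ whenever $i<j$. Put $u_{n}:=a_{n}\cdots a_{n}$ ($n$ copies of the letter $a_{n}$) and $F:=\uparrow\{u_{n}:n\ge1\}$. The heart of the argument is the claim that each $u_{n}$ is a \emph{minimal} element of $F$: if $w\in F$ and $w\le u_{n}$, then every letter of $w$ is $\le a_{n}$, while $w\ge u_{p}$ for some $p\ge1$ yields $p\le|w|\le n$ and at least one letter of $w$ that is $\ge a_{p}$; this gives $a_{p}\le a_{n}$, and since $i<j$ forbids $a_{i}\le a_{j}$ we must have $p=n$, hence $|w|=n$, and then counting letters forces $w=u_{n}$. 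Granting this, assume for contradiction that $\mathcal S_{F}$ is finite; by Proposition~\ref{prop:finitenesswords}, $F=\bigcup_{i<k}F_{i}$ with each $F_{i}=\uparrow X_{i}$, where all words of $X_{i}$ have one common length $n_{i}$. Every minimal element of $F$ --- in particular each $u_{n}$ --- is then minimal in some $F_{i}$, hence lies in $X_{i}$ and has length $n_{i}$; so $n=|u_{n}|$ belongs to the finite set $\{n_{0},\dots,n_{k-1}\}$ for every $n\ge1$, which is absurd. Thus $\mathcal S_{F}$ is infinite, i.e.\ the two-element metric space with $d(x,y)=F$ has infinite injective envelope, so $(ii)$ fails.

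The only genuinely non-routine step is the minimality claim in the last paragraph: one must verify that the words $u_{n}$ really are minimal elements of $F$, not merely that $F$ fails to be finitely generated --- for Proposition~\ref{prop:finitenesswords} is sensitive to the lengths occurring among the minimal elements of $F$, whereas non-finite-generation of $F$ by itself does not force $\mathcal S_{F}$ to be infinite. Everything else is direct bookkeeping with Higman's theorem and the two cited propositions.
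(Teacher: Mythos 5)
Your proof is correct and takes essentially the same route as the paper: reduce to two\nobreakdash-element spaces via Proposition~\ref{prop:finiteness} and decide finiteness of $\mathcal S_F$ by Proposition~\ref{prop:finitenesswords}, with the converse obtained from a final segment generated by an infinite antichain of words of unbounded length extracted from a bad sequence. The only differences are cosmetic: the paper's witness is $\{a_0,a_1a_2,a_3a_4a_5,\dots\}$ where yours is $\{a_n^{\,n}\}$, and you spell out the minimality argument (that the generators are the minimal elements of $F$, whose lengths must all occur among the finitely many $n_i$) which the paper leaves implicit.
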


\begin{proof}
$\lnot  (i)\Longrightarrow \lnot (ii)$.  Let $F$ be
a final segment of $A^{\ast }$.  According to Proposition \ref{prop:finitenesswords}, the injective
envelope $\mathcal{S}_{F}$ is infinite whenever for each decomposition $%
F=\cup F_{i},$ some $F_{i}$ cannot be generated by a set of words having a
bounded length. This is the case if $F$ is generated by an infinite
antichain $X$ made of words of unbounded length. If $A$ is not w.q.o., then there
is an infinite bad sequence of letters, say $a_{0},...,a_{n},....$ The set $%
X=\left\{
a_{0},a_{1}a_{2},a_{3}a_{4}a_{5},a_{6}a_{7}a_{8}a_{9},...\right\}
$ is such an example.\\
$(i) \Longrightarrow (ii)$.  According to Proposition \ref{prop:finiteness}, it suffices
to show that the injective envelope of a two-element metric space
is finite. Let $F$ be a final segment of $A^{\ast }$. If $A$ is
w.q.o., $F$ satisfies condition $(ii) $ of Proposition \ref{prop:finitenesswords},
hence $\mathcal{S}_{F}$ is finite. 
\end{proof}

\noindent {\bf Comments.}
Proposition \ref{prop:finitenesswords} and Theorem \ref{thm:w.q.o.} are special instances of two basic
facts of language theory concerning rational languages, namely Myhill-Nerode Theorem (see \cite{sakarovitch} Theorem 2.3, p.247) and  a result of Ehrenfeucht-Haussler-Rozenberg (Theorem 3.3 of \cite{ehrenfeucht-haussler-rozenberg}, see \cite{sakarovitch} Theorem 5.3, p.296). For brevity, let us put these results together  in the context  of monoids. Let us recall that if $L$ is a subset of a monoid $M$, the \emph{residuals} of $L$ are sets of the form $Lv^{-1}:= \{u\in M: uv\in L\}$ for $v\in M$ and of the form $u^{-1} L:= \{v\in M: uv\in L\}$ for $u\in M$. The \emph{syntactic congruence} of $L$  is the largest congruence $\equiv_L$ on $M$ for which $L$ is an union of classes.  The set $L$  is \emph{recognisable} if it is  the union of classes of a congruence on $M$ which has finitely many classes(alias finite  index). The aforementioned results read as follows:
\begin{theorem}\label{Myhill-Nerode-Ehrenfeucht}
For a  subset $L$ of a monoid $M$, the following properties are equivalent:

\begin{enumerate}[(i)]
\item $L$ is recognisable; 
\item The set of residuals $\{Lv^{-1}:v\in M\}$ is finite;
\item The set of residuals $\{u^{-1}L:u\in M\}$ is finite; 
\item The syntactic congruence $\equiv_L$ has finite index;
\item $L$ is a final segment of a well-quasi-ordered set on $M$ which is compatible with the monoid operation. 
\end{enumerate}
\end{theorem}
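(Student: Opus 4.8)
The plan is to prove the chain of implications $(i)\Rightarrow(iv)\Rightarrow(v)\Rightarrow(iii)\Rightarrow(i)$, and then to get $(i)\Leftrightarrow(ii)$ by the left--right symmetry of the statement: passing to the opposite monoid $M^{\mathrm{op}}$ turns the left residuals $u^{-1}L$ into the right residuals $Lu^{-1}$ and leaves both recognisability and the syntactic congruence unchanged, so $(ii)$ for $M$ is exactly $(iii)$ for $M^{\mathrm{op}}$, and the cycle (which holds for every monoid) applies. As a preliminary routine step I would record the usual description of the syntactic congruence: $u\equiv_L v$ iff $xuy\in L\Leftrightarrow xvy\in L$ for all $x,y\in M$; one checks directly that the relation so defined is a congruence saturating $L$ and contains every congruence saturating $L$, hence is the largest such.

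For $(i)\Rightarrow(iv)$: if $L=\eta^{-1}(P)$ for a morphism $\eta$ of $M$ onto a finite monoid $N$, then $\ker\eta$ is a congruence of finite index saturating $L$, so it is contained in $\equiv_L$, and $\equiv_L$ has finite index too. For $(iv)\Rightarrow(v)$: introduce the \emph{syntactic quasi-order} $u\preceq_L v$ iff $xuy\in L\Rightarrow xvy\in L$ for all $x,y\in M$. It is immediate that $\preceq_L$ is a quasi-order, that it is compatible with the product (if $u\preceq_L v$ then $xuy\preceq_L xvy$ for all $x,y$, whence $ac\preceq_L bd$ whenever $a\preceq_L b$ and $c\preceq_L d$), that $L$ is a final segment for $\preceq_L$ (take $x=y=1$), and that the equivalence relation underlying $\preceq_L$ is precisely $\equiv_L$, which by $(iv)$ has finitely many classes. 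Finally, a quasi-order with finitely many classes is automatically a well-quasi-order: in any infinite sequence two terms lie in one class by the pigeonhole principle, giving an increasing pair. Thus $(M,\preceq_L)$ witnesses $(v)$.

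For $(iii)\Rightarrow(i)$ one uses the transition-monoid construction: if $\mathcal R:=\{u^{-1}L:u\in M\}$ is finite, then $M$ acts on the right of the finite set $\mathcal R$ by $S\cdot w:=w^{-1}S$ — well defined and associative because $(uw)^{-1}L=w^{-1}(u^{-1}L)$ again lies in $\mathcal R$ — so this action is given by a morphism of $M$ into a finite monoid, and since $w\in L\Leftrightarrow 1\in w^{-1}L=L\cdot w$ that morphism recognises $L$. The heart of the matter is $(v)\Rightarrow(iii)$. Fix a well-quasi-order $\preceq$ on $M$ compatible with the product and having $L$ as a final segment. Compatibility in the second coordinate, together with $L$ being a final segment, shows that each $u^{-1}L=\{v:uv\in L\}$ is a final segment of $(M,\preceq)$; compatibility in the first coordinate shows that $u\mapsto u^{-1}L$ is order-preserving from $(M,\preceq)$ to the set of final segments of $(M,\preceq)$ ordered by inclusion. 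Hence $\mathcal R$, ordered by inclusion, is the order-preserving image of a well-quasi-order, so it is itself a well-quasi-order (the ``good pair'' characterisation of well-quasi-order passes to monotone images): it has no infinite antichain and no infinite strictly descending chain. Being at the same time a family of final segments of the well-quasi-order $(M,\preceq)$, it has no infinite strictly ascending chain either, since the final segments of a well-quasi-order are well-founded for the reverse inclusion, as recalled in the introduction. Thus $\mathcal R$ contains neither an infinite chain nor an infinite antichain, and a poset with both properties is finite — an infinite poset always contains an infinite chain or an infinite antichain, by Ramsey's theorem. Therefore $\mathcal R$ is finite, which is $(iii)$.

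I expect $(i)\Rightarrow(iv)$, $(iv)\Rightarrow(v)$ and $(iii)\Rightarrow(i)$ to be mechanical verifications; the only place requiring a genuine idea is $(v)\Rightarrow(iii)$, where the trick is to exploit two opposite finiteness features of the family $\mathcal R$ of residuals simultaneously — it is the monotone image of a well-quasi-order (which bounds chains going down and bounds all antichains) and it is a family of final segments of a well-quasi-order (which bounds chains going up) — and to close the argument with the Ramsey dichotomy between infinite chains and infinite antichains.
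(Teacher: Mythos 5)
Your proof is correct, but note that the paper does not actually prove this statement: it records it as a combination of two known results, attributing the equivalence of $(i)$--$(iv)$ to Myhill--Nerode and the equivalence with $(v)$ to Ehrenfeucht--Haussler--Rozenberg, with citations in place of an argument. Your self-contained treatment is therefore a genuine addition rather than a variant of the paper's proof. The cycle $(i)\Rightarrow(iv)\Rightarrow(v)\Rightarrow(iii)\Rightarrow(i)$ together with the passage to the opposite monoid for $(ii)$ is a clean decomposition; the syntactic quasi-order gives $(iv)\Rightarrow(v)$ essentially for free, and the transition action of $M$ on the finite set of left residuals gives $(iii)\Rightarrow(i)$. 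The one step with real content, $(v)\Rightarrow(iii)$, is handled correctly: each residual $u^{-1}L$ is a final segment of $(M,\preceq)$, the map $u\mapsto u^{-1}L$ is monotone so the family of residuals is a w.q.o.\ under inclusion (no infinite antichain, no infinite descending chain), the ascending chain condition for final segments of a w.q.o.\ kills infinite ascending chains, and Ramsey's dichotomy forces finiteness; this is in substance the Ehrenfeucht--Haussler--Rozenberg argument, and it meshes well with the paper's own use of the well-foundedness of $\mathbf F$ of a w.q.o. Two cosmetic points: it is worth saying explicitly that the largest congruence saturating $L$ in the paper's definition is the relation $u\equiv_L v\iff(\forall x,y)(xuy\in L\Leftrightarrow xvy\in L)$ that you use (you do flag this as a preliminary), and in $(v)\Rightarrow(iii)$ that $\preceq$ is only a quasi-order, so ``final segments of $(M,\preceq)$'' should be read through the quotient poset --- neither affects the argument.
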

  
  The equivalences from $(i)$ to $(iv)$ is Myhill-Nerode Theorem. The equivalence with $(v)$  is Ehrenfeucht-Haussler-Rozenberg Theorem.

            Let   $R:= (M, \rho_{L}, M)$ be the incidence structure where $\rho_L:= \{ (u,v) \in M\times M: uv\in L\}$. Then $u\rho _{L}v\Longleftrightarrow uv\in L$ and 
$R( u)=u^{-1}L$,   
 $R^{-1}(v)= Lv^{-1}$  for  all $u,v\in M$. 
Conditions  $(ii)$ and $(iii)$ in the theorem above express both that the Galois lattice $\gal(R)$ is finite.

%

According to Higman's Theorem, if the  alphabet $A$ is w.q.o.,  $A^*$ equipped with the Higman ordering is w.q.o. Since this  ordering  is compatible with concatenation, we may apply $(v)$ to every final segment $F$ of $A^*$, obtaining that   the Galois lattice $\gal (R)$   is finite. Since the domain of this lattice is  $\mathcal{S}_{F}$, 
implication $(i) \Longrightarrow (ii)$ of Theorem \ref{thm:w.q.o.} follows.


\section{Interval orders, Ferrers relations and injective envelope}\label{ferrers relation}

  We record the characterization of interval orders and Ferrers
relations. These two notions are intertwined.  Interval orders are
those orderings for which the irreflexive part is a Ferrers
relation. Ferrers relations have been introduced by J.Riguet
\cite{Ri}. Interval orders are studied by Fishburn in
\cite{Fi}. Part of the characterization of these
relations given below is due to Wiener \cite{Wi}.
Let $C$ be a chain; an \textit{interval} of $C$ is any subset of  $I$  such
that
$$x\in I,y\in I \ \mbox{and}\ \ x\leq z\leq y \ \mbox{imply}\ \ z\in I.   \eqno(1)
$$\
The collection  $Int$ $C$ of non-empty intervals of $C$ is ordered
as follows:
$$X<Y\;  \mbox{if}\; x<y \; \mbox{for all}\; \; x\in X, y\in Y.
\eqno( 2) $$

Let $P$ be a poset. The ordering on P, or P itself, is an {\it interval
order} if P is order isomorphic to a collection of non-empty
intervals of some chain $C$, ordered by condition $\left(
2\right).$\\
To each incidence structure $R:= \left( V,\rho,W\right) $ we
associate a poset $B(R):=(P, \leq )$ defined as follows:\\
The domain of $P$ is $V$ $\times \left\{ 0\right\} \cup W\times
\left\{ 1\right\},$ for $u=\left( x,i\right),v=\left( y,j\right)
\in P,$ the order relation is defined by:

  $$u<v\;  \mbox{if}\; \ i<j \; \mbox{and}\;  x \rho y. \eqno( 3) $$\\

Let $R:=(V,\rho,W)$ be an incidence structure. We say that $R$\ is
\textit{ Ferrers} or $\rho $ is a \textit{Ferrers relation} if $R$
satisfies one of the following conditions (see \cite{Fi}):

\begin{proposition}\label{prop:intervalorder}
\ Let $R:=(V,\rho,W)$ be an incidence structure. The following
conditions are equivalent: 
\begin{enumerate}[(i)]
\item The set $\left\{ R( 
x) :x\in V\right\}$ is totally ordered by
inclusion; 
\item  The set $\left\{ R^{-1}(
y) :y\in W\right\} $ is totally ordered by
inclusion; 

\item  The Galois lattice $\gal \left( R\right) $ is
totally ordered by inclusion; 

\item  $R$ has a coding into a chain;

\item The poset $B(R) $ does not embed
the direct sum $2\oplus \ 2$ of two copies of the $2$-element chain $2$; 
\item The ordering on $B(R)$ is an
interval order; 

\item  $x\rho y$ and $x^{\prime }\rho y^{\prime }$ imply $ 
x\rho y^{\prime }$ or $x^{\prime }\rho y$ for all $x,x^{\prime }\in V$,  $y,y^{\prime }\in W.$
\end{enumerate}
\end{proposition}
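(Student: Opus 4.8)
\textbf{Proof plan for Proposition \ref{prop:intervalorder}.}
The plan is to prove the equivalences by establishing a cycle of implications together with a few direct pairings, exploiting the symmetry between $R$ and $R^{-1}$ and the fact (recalled above) that $\gal(R)$ is dually isomorphic to $\gal(R^{-1})$. First I would observe that $(i)\Leftrightarrow (ii)$ follows by duality: replacing $R$ by $R^{-1}$ exchanges the sets $\{R(x):x\in V\}$ and $\{R^{-1}(y):y\in W\}$, and the statement is self-dual. Likewise $(vii)$ is visibly symmetric in the roles of $V,W$ and $\rho$, $\rho^{-1}$, so it needs to be connected to only one of $(i)$, $(ii)$.

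Next I would prove $(i)\Leftrightarrow (vii)$ directly. If $\{R(x):x\in V\}$ is a chain and $x\rho y$, $x'\rho y'$, then either $R(x)\subseteq R(x')$ or $R(x')\subseteq R(x)$; in the first case $y\in R(x)\subseteq R(x')$ gives $x'\rho y$, in the second $y'\in R(x')\subseteq R(x)$ gives $x\rho y'$, which is $(vii)$. Conversely, if $(vii)$ holds and $R(x)\not\subseteq R(x')$, pick $y\in R(x)\setminus R(x')$; for any $y'\in R(x')$ applying $(vii)$ to $x\rho y$ and $x'\rho y'$ forces $x\rho y'$ (since $x'\rho y$ fails), so $R(x')\subseteq R(x)$, i.e. the family is totally ordered. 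Then I would handle $(i)\Leftrightarrow (iii)$: the Galois lattice $\gal(R)$ consists of all intersections $R^{-1}_{\wedge}(Y)$, and since the maps $R^{-1}_{\wedge}(Y)$ for singletons $Y=\{y\}$ are exactly the $R^{-1}(y)$, $\gal(R)$ is the closure under arbitrary intersection of the family $\{R^{-1}(y):y\in W\}$ together with the top element $V$. A family of sets is totally ordered by inclusion iff its closure under intersection is, so $(iii)\Leftrightarrow (ii)$, which with $(i)\Leftrightarrow (ii)$ gives $(iii)\Leftrightarrow (i)$.

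For $(iii)\Leftrightarrow (iv)$ I would invoke Bouchet's Coding Theorem \ref{bouchethm}: $R$ has a coding into a complete lattice $(T,\leq,T)$ iff $\gal(R)$ embeds into $T$. Taking $T$ a chain, $\gal(R)$ embeds into a chain iff $\gal(R)$ is itself a chain (a lattice embeds into a chain iff it is a chain); this gives $(iv)\Leftrightarrow (iii)$, with the minor caveat that one may need to complete the target chain, which does not affect codings. Finally for the three remaining conditions: $(v)\Leftrightarrow (vi)$ is the classical Fishburn characterisation of interval orders as the posets omitting $2\oplus 2$, and I would either cite \cite{Fi} or give the short argument that an embedded $2\oplus 2$ produces elements violating $(vii)$ and conversely. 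The link $(vi)\Leftrightarrow (iv)$ comes from the construction $B(R)$: a representation of $B(R)$ by intervals of a chain is, on the $V$-part and $W$-part, precisely a coding of $R$ into that chain (the interval assigned to $(x,0)$ lies entirely below that of $(y,1)$ iff $x\rho y$); conversely a coding yields such an interval representation. I expect the main obstacle to be stating the $B(R)$–interval-representation correspondence cleanly enough that $(iv)$, $(vi)$ and Fishburn's theorem fit together without circularity; once that dictionary is in place the rest is routine manipulation of the defining properties, and the whole proposition closes as a single chain of equivalences $(i)\Leftrightarrow(ii)\Leftrightarrow(iii)\Leftrightarrow(iv)\Leftrightarrow(vi)\Leftrightarrow(v)$, with $(vii)$ attached to $(i)$.
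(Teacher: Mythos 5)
Your argument is sound, and it is worth noting at the outset that the paper does not actually prove this proposition: it merely records the characterization with a pointer to Fishburn \cite{Fi} (part being credited to Wiener \cite{Wi}), so your proposal supplies a proof where the paper offers only a citation. The individual steps check out: the direct computation for $(i)\Leftrightarrow(vii)$ is correct; the intersection-closure argument for $(ii)\Leftrightarrow(iii)$ is correct (if $\{A_i\}$ is a chain of sets, any two intersections of subfamilies are comparable, and the empty intersection contributes only the top element $V$); and the appeal to Bouchet's Theorem \ref{bouchethm} for $(iii)\Leftrightarrow(iv)$ is legitimate once the target chain is replaced by its completion, which is again a chain, and once one notes that a lattice order-embeds into a chain only if it is itself a chain. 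The one step you flag as delicate --- the dictionary between codings into a chain and interval representations of $B(R)$ needed for $(iv)\Leftrightarrow(vi)$ --- is indeed the fiddliest part of your plan, but you can bypass it entirely: since $B(R)$ has only two levels, every strict comparability crosses levels, so a copy of $2\oplus 2$ in $B(R)$ is exactly a quadruple $x,x'\in V$, $y,y'\in W$ with $x\rho y$, $x'\rho y'$, $\lnot(x\rho y')$ and $\lnot(x'\rho y)$; hence $(v)\Leftrightarrow(vii)$ is immediate, $(vi)$ attaches to $(v)$ by Fishburn's theorem, and $(iv)$ remains linked through $(iii)$ as you already have it. With that shortcut the whole cycle closes without ever constructing an interval representation explicitly.
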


We say that a language $L$  is \textit{Ferrers} if the relation $%
\rho _{L}$ is Ferrers. 

According to condition $(vii)$ of Proposition \ref {prop:intervalorder}, this amounts to the following condition:
$$xx^{\prime }\in L\;  \mbox{and}\; yy^{\prime }\in L\;  \text{imply}\;  xy^{\prime }\in L\; 
\text{or}\;  yx^{\prime }\in L\;  \text{for all}\; x,x^{\prime },y,y^{\prime }\in
A^{\ast }.    \eqno(4) $$

 The study of Ferrers relations leads to the notion of Ferrers
dimension of a binary relation: the least number of Ferrers
relations whose intersection is this relation. The study of this
notion,  initiated by Bouchet in  his thesis \cite{Bo},  yields numerous
interesting results (e.g., see \cite{cogis, doignon}). This  suggest to look at the  same direction in the theory of languages. But, we may notice that contrarily to the case of relations, not every language is  Ferrers, or is an intersection of Ferrers  languages.  In fact, if $w$ is any  word,   $\rho_{\{w\}}$ is a Ferrers relation only if $w=\Box$ ( indeed, let $R:= (A^{*}, \rho_{\{w\}}, A^*)$,  then $\gal (R)= \{\{u\}: u\;  \text{prefix of}\;  w\}\cup\{\emptyset, A^*\}$, thus  if $w\not = \Box$, the Galois lattice has at least two incomparable elements, namely $\{w\}$ and $\{\Box\}$  plus a top and a bottom, thus it is not a chain). Thus, if $w\not = \Box$,  $\{w\}$ is not a Ferrers language and since it is a singleton,  it is not a union of Ferrers languages, hence its complement $A^{*} \setminus \{w\}$ is not an intersection of Ferrers languages. 

Since the complement of a Ferrers relation is Ferrers, the
complement of a Ferrers language is Ferrers (apply $(1)$ of Fact \ref{fact:complement}). The concatenation  of two Ferrers languages is not Ferrers in general. For a simple minded example, let $A:= \{a,b\}$, let $U:=  \{a^n: n\geq 2\}$, $U':= \{b^n: n\geq 2\}$. Each of these languages is Ferrers, but the concatenation $UU'$is not: 
let $x:= a^2b$,$y:=b$ and $x':= a$,$y':=ab^2$. Then $xy=x'y'= a^2b^2\in UU'$ but neither  
$xy'= a^2bab^2$ nor $x'y=ab$ belong to $UU'$.

Recall that for two words $u$ and $v$, $u$ is a \emph{prefix} of $v$ if $v=uw$ for some word $w$;  similarly, $u$ is a \emph{suffix} of $v$  if $v= wu$ for some word $w$. 

\begin{fact}\label{fact:product} Let $U$ and $U'$ be two subsets of $A^{\ast }$. If $U,U'$
are Ferrers and $U$ is a final segment for the prefix ordering  or $U'$ is a final segment for the suffix ordering then  the concatenation $UU'$ is Ferrers.
\end{fact}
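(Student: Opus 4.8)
The plan is to verify condition $(vii)$ of Proposition \ref{prop:intervalorder} directly for the relation $\rho_{UU'}$, treating the two hypotheses symmetrically. So suppose $xx' \in UU'$ and $yy' \in UU'$; I must produce a factorization witnessing $xy' \in UU'$ or $yx' \in UU'$. Unpacking the definitions, there are words $p, q$ with $p \in U$, $q \in U'$ and $pq = xx'$, and words $r, s$ with $r \in U$, $s \in U'$ and $rs = yy'$. The essential combinatorial input is that two factorizations of the \emph{same} word always ``overlap'': since $pq = xx'$, either $x$ is a prefix of $p$ (so $p = xt$ for some $t$) or $p$ is a prefix of $x$ (so $x = pt$ for some $t$), and similarly for $r$ versus $y$. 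The argument will be a case analysis on these four prefix/suffix comparisons, and I will use the appropriate hypothesis in each case.

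First I would treat the case where $U$ is a final segment for the prefix ordering. The key observation is that since $p \in U$ and $U$ is closed upward under the prefix order, any word having $p$ as a prefix also lies in $U$; dually I do \emph{not} need such closure for $U'$. Here is the mechanism in the representative subcase $p = xt$ and $r = yt'$ (both $p$ and $r$ are longer than $x$, $y$ respectively): then $xt q = xx'$ gives $tq = x'$, and $yt's = yy'$ gives $t's = y'$. Now I want $xy' \in UU'$: write $xy' = x t' s$. Hmm, here $xt'$ may not obviously lie in $U$. The cleaner route is to compare $p$ with $x$ and $r$ with $y$ and pick, among $p$ and $r$, whichever is the shorter; say $p$ is a prefix of $r$ (the lengths being comparable is automatic once we know how they relate to the split point). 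Actually the correct pivot is to compare where the cut falls: let me instead argue that one of $x, y$ is a prefix of one of $p, r$ in a way that lets me graft. I expect the clean formulation to be: among the four words, compare $|x|$ with $|p|$ and $|y|$ with $|r|$; in the ``mixed'' cases ($|x| \le |p|$ and $|y| \ge |r|$, or vice versa) a direct splice produces the desired factorization using only that $UU'$ is a product, while in the two ``aligned'' cases ($|x| \le |p|$, $|y| \le |r|$, and its mirror) I need the closure hypothesis on $U$ (resp. the suffix-closure on $U'$) to absorb the discrepancy.

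Concretely, in the aligned case $x = pt_1$ and $y = rt_2$ with $t_1 q = x'$... no: $pq = xx' = pt_1 x'$ forces $q = t_1 x'$, fine. I want to show $yx' \in UU'$ or $xy' \in UU'$. We have $y = rt_2$ with $r \in U$, so $y$ has $r \in U$ as a prefix, hence $y \in U$ by upward closure; and $x' $: from $pq = xx'$ with $q = t_1 x' \in U'$ and $x'$ a suffix of $q$ — but $U'$ need not be suffix-closed under this hypothesis, so instead I note $yx' = r t_2 x' $ and I need $t_2 x' \in U'$. This is where I must be careful: the splice has to be arranged so the ``new'' second factor is exactly one of the \emph{original} second factors $q$ or $s$, not a newly-assembled word. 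I believe the right bookkeeping is: compare the split point of $xx'$ (namely $|x|$) with the split point $|p|$ of the $U\cdot U'$ factorization of the \emph{same} word, and similarly $|y|$ with $|r|$; then in every case one of $xy'$, $yx'$ factors as (prefix of a word already known to be in $U$, via prefix-closure) $\cdot$ (one of $q, s$, which is already in $U'$). Chasing through the four cases, the prefix-closure of $U$ handles the two cases where the ``$U$-side'' needs extending, and the single product structure handles the rest.

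The main obstacle, and the only real content, is organizing this case analysis so that each splice reuses an \emph{intact} original factor on the side where we have no closure hypothesis, and only stretches the factor on the side ($U$, for prefixes; $U'$, for suffixes) where the hypothesis applies. Once the case split on the relative positions of the two cut points is set up correctly, each case is a one-line verification. The symmetric statement — $U'$ a final segment for the suffix ordering — follows either by the mirror-image argument (reverse every word, using that $u \mapsto \overline{u}$... no, plain word reversal, which turns prefixes into suffixes and sends $UU'$ to $(U')^{\mathrm{rev}} U^{\mathrm{rev}}$) or simply by running the same four-case analysis with the roles of the ``front'' and ``back'' interchanged. I would state it via the reversal symmetry to avoid repeating the casework. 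Finally, Corollary \ref{productFerrers} (that a concatenation of Ferrers \emph{final segments} of $A^*$ is Ferrers) is then immediate: a final segment of $A^*$ for the Higman order is a fortiori a final segment for the prefix order, so Fact \ref{fact:product} applies to each adjacent pair, and one inducts on $n$.
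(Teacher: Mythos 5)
Your skeleton — compare the two cut points $|p|$ vs.\ $|x|$ and $|r|$ vs.\ $|y|$ and do a four-way case split — is the same one the paper uses, but you have assigned the hypotheses to the wrong cases, and the plan as stated cannot be completed. Write $xx'=pq$, $yy'=rs$ with $p,r\in U$, $q,s\in U'$. In the \emph{aligned} case $|p|\le |x|$, $|r|\le|y|$ (so $x=pv$, $y=rw$, $q=vx'$, $s=wy'$), neither $xy'$ nor $yx'$ can be written with an intact $q$ or $s$ as second factor ($xy'$ ends in $y'$ while $q$ ends in $x'$, and there is no reason the tail of $x$ should coincide with the tail of $y$), and prefix-closure of $U$ only yields $x,y\in U$, leaving you needing $y'\in U'$ or $x'\in U'$, which you do not have. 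What saves this case is that $U'$ is itself \emph{Ferrers}: from $vx'\in U'$ and $wy'\in U'$ you get $vy'\in U'$ or $wx'\in U'$, hence $xy'=p(vy')\in UU'$ or $yx'=r(wx')\in UU'$; the mirror aligned case uses that $U$ is Ferrers. You ran into exactly this obstruction ("I need $t_2x'\in U'$") and then asserted it could be engineered away by better bookkeeping; it cannot — invoking the Ferrers property of the factors is the indispensable step there, and your proposal never invokes it.

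Conversely, in the \emph{mixed} cases (say $p=xt$, $y=ru$, so $x'=tq$ and $s=uy'$) there is no "direct splice using only that $UU'$ is a product": $yx'=(ru)(tq)$, and to place a legal cut you must stretch one factor across the middle, which is precisely where the closure hypothesis enters ($rut\in U$ by prefix-closure of $U$, since it has the prefix $r\in U$; or $utq\in U'$ by suffix-closure of $U'$). So the two tools are needed in exactly the opposite cases from where you put them. The reduction of the suffix-closure variant by word reversal is fine, as is the deduction of Corollary \ref{productFerrers} from the Fact.
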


\begin{proof}
Let $L:= UU'$, let $xx^{\prime }\in L$ and $yy^{\prime }\in L$. We prove that either $xy'$ or $x'y$ belong to $L$. There are four cases to consider; we only consider two, the others being similar.

\noindent{\bf Case 1.} $x=x_{1}x_{2},y=y_{1}y_{2}$ with $x_{1},y_{1}\in U$ and $%
x_{2}x^{\prime }\in U'$ and $y_{2}y^{\prime }\in U'.$ Since $U'$ is
Ferrers, either $x_{2}y^{\prime }\in U'$ or $y_{2}x^{\prime
}\in U'.$ In the former case $xy^{\prime }\in UU'$ whereas in the latter $%
yx^{\prime }\in UU'.$\\
{\bf Case 2.} $x=x_{1}x_{2},y^{\prime}=y_{1}^{\prime }y_{2}^{\prime }$ with $x_{1}\in U,$ $%
yy_{1}^{\prime }\in U,$ $x_{2}x^{\prime }\in U'$ and $y_{2}^{\prime }\in U'.$ If $U$ is a final segment for the prefix ordering, then  $x_1x_2y'_1\in U$ since $x_1$ belongs to $U$  and is a prefix  of $x_1x_2y'_1$. Thus  $xy'= x_1x_2 y'_1y'_2\in UU'$.   If $U'$ is a final segment for the suffix ordering, then $x_2y'_1y'_2\in U'$ since $y'_2$ belongs to $U'$  and is a suffix of $y'_2$. Thus $xy'= x_1x_2 y'_1y'_2\in UU'$.  \end{proof}

\begin{corollary} \label {productFerrers}
The concatenation of finitely many Ferrers final segments of $A^{*}$ is a Ferrers final segment. 
\end{corollary}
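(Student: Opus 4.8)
The plan is to deduce Corollary \ref{productFerrers} directly from Fact \ref{fact:product}, using induction on the number of factors, the key observation being that a final segment of $A^{*}$ (for the Higman ordering) is in particular a final segment both for the prefix ordering and for the suffix ordering. First I would record this observation: if $F$ is a final segment of $A^{*}$ for the Higman ordering and $u\in F$ with $u$ a prefix (respectively suffix) of $v$, then $u\leq v$ in the Higman ordering, hence $v\in F$; so $F$ is a fortiori a final segment for the prefix ordering and for the suffix ordering. In particular each $F_{i}$ in the statement satisfies the hypothesis needed to invoke Fact \ref{fact:product}.

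Next I would carry out the induction. Let $F_{1},\dots,F_{n}$ be Ferrers final segments of $A^{*}$; I want to show $F_{1}\cdots F_{n}$ is a Ferrers final segment. For $n=1$ there is nothing to prove. For $n\geq 2$, set $U':=F_{2}\cdots F_{n}$, which by the induction hypothesis is a Ferrers final segment of $A^{*}$; it is therefore a final segment for the suffix ordering by the observation above. Then $U:=F_{1}$ is Ferrers and $U'$ is a final segment for the suffix ordering, so Fact \ref{fact:product} gives that $F_{1}\cdots F_{n}=UU'$ is Ferrers. It remains to note that the concatenation of final segments of $A^{*}$ is again a final segment of $A^{*}$ — this is already recorded in the excerpt (the set $\mathbf F(A^{*})$ is stable under concatenation, see Section \ref{section heyting}) — so $F_{1}\cdots F_{n}$ is a Ferrers final segment, completing the induction.

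I do not expect any real obstacle here; the only point requiring a moment's care is the direction of the final-segment hypothesis in Fact \ref{fact:product} (prefix on the left factor versus suffix on the right factor) and making sure the inductive grouping is arranged so that the available hypothesis is the one needed — grouping as $F_{1}\cdot(F_{2}\cdots F_{n})$ and using the suffix-closedness of the right factor works cleanly, and symmetrically one could group as $(F_{1}\cdots F_{n-1})\cdot F_{n}$ and use prefix-closedness of the left factor. Either way the argument is a two-line induction once the observation that Higman final segments are prefix- and suffix-closed is in place.
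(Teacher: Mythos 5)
Your proof is correct and takes exactly the route the paper intends: the corollary is stated there without an explicit proof, as an immediate consequence of Fact \ref{fact:product}, and your induction together with the observation that a final segment for the Higman ordering is a fortiori a final segment for the prefix and for the suffix orderings supplies precisely the details the paper leaves implicit.
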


We recall that an \emph{ideal} of an ordered set $P$ is any non-empty initial segment $\mathcal I$ which is up-directed (that is any two elements $ x$ and $y$ of $\mathcal I$ have an upper bound $z$ in $\mathcal I$). Filters are defined dually. Ideals of $P$ are the join-irreducible elements of the lattice $I(P)$ of initial segments of $P$.  Ideals of the poset  $ A^{\ast}$ equipped with the Higman ordering have been described when $A$ is finite by Jullien \cite{jullien} and by us \cite{KP3} when $A$ is an ordered alphabet possibly infinite.  According to  Jullien, an  \emph{elementary ideal} of $A^{\ast}$ is any set  of the form $J�\cup \{\Box\}$  for some non empty ideal $J$  of $A$,  a \emph{star-ideal} is any set of the  form $I^{\ast}$  for some initial segment $I$ of $A$. Products of ideals are ideals. It is proved in \cite {KP3}  that every ideal  is a finite product of elementary and star-ideals if and only if the alphabet is well-quasi-ordered.


\begin{fact}\label{fact:ideal}  Ideals and filters of $A^{\ast}$ are Ferrers.
\end{fact}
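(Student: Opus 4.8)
The plan is to verify, for an ideal $\mathcal I$ of $A^{\ast}$, the defining condition $(4)$ of a Ferrers language (equivalently, condition $(vii)$ of Proposition \ref{prop:intervalorder} applied to $\rho_{\mathcal I}$), and then run the order-dual argument for filters. So let $x,x',y,y'\in A^{\ast}$ with $xx'\in\mathcal I$ and $yy'\in\mathcal I$. Since $\mathcal I$ is up-directed, I would first pick $w\in\mathcal I$ with $xx'\le w$ and $yy'\le w$ for the Higman ordering. An embedding witnessing $xx'\le w$ sends the first $|x|$ letters (those of $x$) to positions strictly left of the image of the $|x'|$ letters of $x'$; cutting $w$ at that interface yields a factorisation $w=w_{1}w_{2}$ with $x\le w_{1}$ and $x'\le w_{2}$. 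Likewise $w=w_{3}w_{4}$ with $y\le w_{3}$ and $y'\le w_{4}$ (the degenerate cases $x=\Box$, etc., being immediate).

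Next I would compare the lengths of the two left factors. If $|w_{1}|\le|w_{3}|$, then $w_{4}$ is a suffix of $w_{2}$, hence $w_{4}\le w_{2}$; since concatenation is monotone for the Higman ordering (as $A^{\ast}$ is an ordered monoid), $xy'\le w_{1}w_{4}\le w_{1}w_{2}=w$, and because $\mathcal I$ is an initial segment, $xy'\in\mathcal I$. If instead $|w_{3}|\le|w_{1}|$, then symmetrically $w_{2}$ is a suffix of $w_{4}$, so $yx'\le w_{3}w_{2}\le w_{3}w_{4}=w$ and $yx'\in\mathcal I$. Either way, one of $xy'$, $yx'$ lies in $\mathcal I$, so $\mathcal I$ is Ferrers.

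For a filter $\mathcal F$ I would argue dually: given $xx',yy'\in\mathcal F$, down-directedness supplies $w\in\mathcal F$ with $w\le xx'$ and $w\le yy'$. This time I cut $w$ according to where its letters land under the embedding into $xx'$: letting $k$ be the number of letters of $w$ mapped into the $x$-block of $xx'$, set $w=w_{1}w_{2}$ with $|w_{1}|=k$, so that $w_{1}\le x$ and $w_{2}\le x'$; likewise $w=w_{3}w_{4}$ with $w_{3}\le y$ and $w_{4}\le y'$. If $|w_{1}|\le|w_{3}|$, write $w_{3}=w_{1}v$; then $w_{2}=vw_{4}$, so $w_{4}\le w_{2}$, whence $w=w_{3}w_{4}\le w_{3}w_{2}\le yx'$ and $yx'\in\mathcal F$ because $\mathcal F$ is a final segment. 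The case $|w_{3}|\le|w_{1}|$ is symmetric and gives $xy'\in\mathcal F$.

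The arguments are short once set up; the only place needing care is the bookkeeping of the two factorisations. One must notice that the "overlap" between the two cut points attaches to $x'$ (the right side) in the ideal case but to the $y$-side in the filter case, so the winning word among $xy'$ and $yx'$ is different in the two situations. Everything else is just monotonicity of concatenation together with the elementary fact that a suffix of a word lies below it in the Higman ordering.
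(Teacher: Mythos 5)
Your proof is correct and follows essentially the same route as the paper's: use directedness of the ideal (resp.\ the filter) to obtain a common bound $w$, factorize $w$ compatibly with the embeddings witnessing $xx'\le w$ and $yy'\le w$ (or $w\le xx'$ and $w\le yy'$), and compare the resulting suffixes via monotonicity of concatenation. The only cosmetic difference is in the ideal case, where the paper cuts $w$ at the least prefix containing both $x$ and $y$ and argues by contradiction, while you compare two independent cuts by length; the two arguments come to the same thing.
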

\begin{proof}
Let $\mathcal I$ be an ideal of $A^{\ast }$. Suppose $xx^{\prime },$ $yy^{\prime
}\in \mathcal I$. We prove that $xy^{\prime}$ or $yx^{\prime }\in \mathcal I.$ Let $z\in \mathcal I$ such that $xx^{\prime },$ $yy^{\prime }\leq z.$ Let $%
z_{1}$ be the least prefix of $z$ such that $x,y\leq z_{1}$ and $z_{2}$ the
corresponding suffix, i.e.,  $z=z_{1}z_{2}.$ If  $x^{\prime}\leq z_{2}$ or $y^{\prime} \leq z_{2},$ then since $x, y
 \leq z_1$ then  $xy^{\prime }$ or $yx^{\prime }\in \mathcal I.$ If not, then since $xx'\leq z$ and $yy'\leq z$, we have $x,y\leq z^{-}_1$, where $z^{-}_{1}$ is obtained from $z_1$ by deleting its last letter. This contradicts the choice of $z_1$. Hence, $\mathcal I$ is Ferrers.   Let $\mathcal F$ be a filter of $A^{\ast}$. Let $xx^{\prime }$, $yy^{\prime
}\in \mathcal F$.  Let $z\in \mathcal F$ such that $z\leq  xx^{\prime }$ and  $z\leq yy^{\prime
}$. Write $z= z^{x}z^{x^{\prime}}$ with $z^{x}\leq x$, $z^{x^{\prime}}\leq x^{\prime}$ and $z= z^{y}z^{y^{\prime}}$ with $z^{y}\leq y$, $z^{y^{\prime}}\leq y^{\prime}$. Either $z^{y^{\prime}}\leq z^{x^{\prime}}$ or $z^{x^{\prime}}\leq z^{y^{\prime}}$. In the first case, since $z^{y}\leq y$ and $z^{y^{\prime}}\leq z^{x^{\prime}}\leq x^{\prime}$ we have $z= z^{y} z^{y'}\leq yx'$, hence $yx'\in \mathcal F$. In the second case we obtain similarly $xy'\in \mathcal F$. 
Hence $\mathcal F$ is Ferrers. \end{proof}

For every $u\in A^{*}$, the  initial segment $\downarrow u$ of $A^*$ is an ideal and the final segment $\uparrow u$ of $A^*$ is a filter, hence: 

\begin{corollary}\label{cor:initial-final ferrers} For every $u\in A^{*}$,   the  initial segment $\downarrow u$ of $A^*$ and the final segment $\uparrow u$ of $A^*$ are Ferrers.
\end{corollary}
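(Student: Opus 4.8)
The plan is to reduce the statement directly to Fact \ref{fact:ideal}, which already asserts that ideals and filters of $A^{\ast}$ are Ferrers. So the only thing to check is that $\downarrow u$ is an ideal and $\uparrow u$ is a filter of the poset $A^{\ast}$ equipped with the Higman ordering.

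First I would observe that $\downarrow u:=\{v\in A^{\ast}: v\leq u\}$ is non-empty (it contains $u$, and also $\Box$) and is by definition an initial segment. It is up-directed for a trivial reason: if $v,w\in\ \downarrow u$, then $u$ itself is an element of $\downarrow u$ which is an upper bound of both $v$ and $w$. Hence $\downarrow u$ is a (principal) ideal of $A^{\ast}$. Dually, $\uparrow u:=\{v\in A^{\ast}: u\leq v\}$ is a non-empty final segment, and it is down-directed because $u\in\ \uparrow u$ is a lower bound of any two of its elements; hence $\uparrow u$ is a (principal) filter of $A^{\ast}$.

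Applying Fact \ref{fact:ideal} to the ideal $\downarrow u$ and to the filter $\uparrow u$ then yields that both $\downarrow u$ and $\uparrow u$ are Ferrers languages, which is exactly the claim. There is no real obstacle here: the content is entirely carried by Fact \ref{fact:ideal}, and the corollary is just the remark that $\downarrow u$ and $\uparrow u$ are the simplest instances of ideals and filters (the principal ones), the up-directedness and down-directedness being witnessed by $u$ itself.
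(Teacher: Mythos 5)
Your proof is correct and is exactly the paper's argument: the paper derives the corollary in one line by noting that $\downarrow u$ is a (principal) ideal and $\uparrow u$ a (principal) filter, and then invoking Fact \ref{fact:ideal}. Your verification that $u$ itself witnesses the up-directedness (resp.\ down-directedness) is the only content needed, and you supply it.
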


Since $\{u\}= \downarrow u \bigcap \uparrow u$, we have:

\begin{corollary}\label{cor:intersection of two} For every $u\in A^{*}$, $\{u\}$ is an intersection of two Ferrers languages and   $A^{*}\setminus \{u\}$ is an union of two Ferrers languages.
\end{corollary}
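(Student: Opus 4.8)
The plan is to obtain the corollary as an immediate formal consequence of Corollary~\ref{cor:initial-final ferrers} together with the already recorded fact that the class of Ferrers languages is closed under complementation. The only piece of genuine content is the set-theoretic identity $\{u\}=\,\downarrow u\ \cap\ \uparrow u$, which is nothing more than antisymmetry of the Higman ordering on $A^{*}$: a word $v$ satisfies both $v\leq u$ and $u\leq v$ precisely when $v=u$ (this uses that the order on the alphabet $A$, and hence the induced order on $A^{*}$, is antisymmetric). No metric or Galois-lattice machinery is invoked.

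First I would apply Corollary~\ref{cor:initial-final ferrers}, which gives that $\downarrow u$ and $\uparrow u$ are both Ferrers languages. Intersecting them and using the identity $\{u\}=\,\downarrow u\ \cap\ \uparrow u$ noted above yields the first assertion: $\{u\}$ is an intersection of two Ferrers languages. For the second assertion I would pass to complements and apply De~Morgan,
$$A^{*}\setminus\{u\}=(A^{*}\setminus\downarrow u)\cup(A^{*}\setminus\uparrow u),$$
and then invoke closure of Ferrers languages under complement (the remark following Fact~\ref{fact:complement}, itself deduced from the fact that the complement of a Ferrers relation is Ferrers via part~$(1)$ of Fact~\ref{fact:complement}): both $A^{*}\setminus\downarrow u$ and $A^{*}\setminus\uparrow u$ are Ferrers, so $A^{*}\setminus\{u\}$ is a union of two Ferrers languages.

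There is essentially no obstacle: once Corollary~\ref{cor:initial-final ferrers} and the closure-under-complement property are in hand, the statement is a one-line deduction, and the only point deserving explicit mention in the write-up is that $\{u\}=\,\downarrow u\ \cap\ \uparrow u$ is exactly antisymmetry of the ambient ordering.
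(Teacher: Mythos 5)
Your proposal is correct and follows exactly the paper's own route: the paper derives the corollary from the identity $\{u\}=\,\downarrow u\,\cap\,\uparrow u$, Corollary~\ref{cor:initial-final ferrers}, and closure of Ferrers languages under complement, just as you do. Your explicit remark that the identity rests on antisymmetry of the (Higman) ordering is a harmless and accurate elaboration of a step the paper leaves implicit.
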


If $A$ is w.q.o., every final segment is finitely generated and every ideal is a finite union of ideals, hence from the second part of Corollary \ref{cor:initial-final ferrers} and  from the first part of Fact \ref{fact:ideal}, we obtain.

\begin{corollary}\label{cor:wqo-ferrers} If $A$ is w.q.o., every final segment  if a finite union and a finite intersection of Ferrers languages. \end{corollary}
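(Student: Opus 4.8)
The plan is to assemble the statement from the three ingredients already in hand: Higman's theorem, the description of ideals of $A^{*}$, and Fact \ref{fact:ideal} together with Corollary \ref{cor:initial-final ferrers}. Recall that $F$ being a final segment of $A^{*}$ means $A^{*}\setminus F$ is an initial segment; dually, an arbitrary final segment is a complement of an initial segment, and an initial segment decomposes into ideals exactly when it is a union of its maximal directed down-subsets. So the two halves of the claim — ``finite union of Ferrers languages'' and ``finite intersection of Ferrers languages'' — are dual to each other via complementation, using the fact (already noted, via $(1)$ of Fact \ref{fact:complement}) that the complement of a Ferrers language is Ferrers.

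First I would prove the ``finite union'' half. Since $A$ is w.q.o., $A^{*}$ with the Higman ordering is w.q.o. by Higman's theorem; hence every final segment $F$ of $A^{*}$ is finitely generated, say $F=\uparrow u_{0}\cup\cdots\cup\uparrow u_{k-1}$ for finitely many words $u_{i}\in A^{*}$. By Corollary \ref{cor:initial-final ferrers} each $\uparrow u_{i}$ is a Ferrers language, so $F$ is a finite union of Ferrers languages, as desired. For the ``finite intersection'' half, write $F=A^{*}\setminus I$ where $I$ is an initial segment of $A^{*}$. Since $A^{*}$ is w.q.o., $I$ is a finite union of ideals, $I=\mathcal I_{0}\cup\cdots\cup\mathcal I_{m-1}$ (each ideal being a down-set that is up-directed; a w.q.o. initial segment has only finitely many maximal ideals below it). By the first part of Fact \ref{fact:ideal} each $\mathcal I_{j}$ is Ferrers, and since the class of Ferrers languages is closed under complement, each $A^{*}\setminus\mathcal I_{j}$ is Ferrers; therefore $F=A^{*}\setminus I=\bigcap_{j<m}\bigl(A^{*}\setminus\mathcal I_{j}\bigr)$ is a finite intersection of Ferrers languages.

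The only point requiring a little care — and the place I would expect to spend the most words — is the assertion that over a w.q.o.\ alphabet every initial segment of $A^{*}$ is a \emph{finite} union of ideals. This is the dual, for the Higman order, of ``every final segment is finitely generated'': an initial segment $I$ of a w.q.o.\ poset is the union of the down-sets $\downarrow x$ for $x\in I$, these are directed (being down-sets of a single element, they are even principal ideals), and since $I$ is determined by its finitely many maximal elements — or, more robustly, since the poset of ideals is itself w.q.o.\ when the poset is w.q.o., so any antichain of maximal ideals of $I$ is finite — only finitely many are needed. Once this is granted the rest is a two-line bookkeeping argument combining Corollary \ref{cor:initial-final ferrers}, Fact \ref{fact:ideal}, and closure under complement.
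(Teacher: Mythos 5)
Your proof follows the paper's argument exactly: the union half comes from finite generation of $F$ (Higman's theorem) plus Corollary \ref{cor:initial-final ferrers}, and the intersection half from writing $A^{*}\setminus F$ as a finite union of ideals and applying the first part of Fact \ref{fact:ideal} together with closure of Ferrers languages under complement. The only point where you go beyond the paper is your attempted justification of the auxiliary fact that an initial segment of a w.q.o.\ poset is a finite union of ideals, and both justifications you offer are defective: an initial segment of a w.q.o.\ poset need not have any maximal elements (take all of $\mathbb{N}$ as an initial segment of itself), and the set of ideals of a w.q.o.\ poset ordered by inclusion need not be w.q.o.\ (Rado's example). The fact itself is true and standard, and the clean argument is by minimal counterexample: since $A^{*}$ is w.q.o., its lattice of initial segments satisfies the descending chain condition, so one may take $I$ minimal among initial segments that are not finite unions of ideals; $I$ is not up-directed, so there are $x,y\in I$ with no common upper bound in $I$, whence $I=(I\setminus\uparrow x)\cup(I\setminus\uparrow y)$ is a union of two strictly smaller initial segments, each a finite union of ideals by minimality --- a contradiction.
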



From Corollary  \ref{productFerrers} and Fact \ref{fact:ideal} follows:

\begin{proposition}\label{prop:produit}Final segments of $A^{\ast }$
which are finite product of complement of ideals of $A^{\ast
}$  are Ferrers. 
\end{proposition}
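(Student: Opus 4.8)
The plan is to derive the statement directly from the two ingredients the text has just assembled: Fact~\ref{fact:ideal}, that every ideal of $A^{\ast}$ is Ferrers, and Corollary~\ref{productFerrers}, that a concatenation of finitely many Ferrers final segments of $A^{\ast}$ is again a Ferrers final segment.

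First I would check that the complement of an ideal is a Ferrers final segment. Let $\mathcal I$ be an ideal of $A^{\ast}$. Since an ideal is in particular an initial segment, the complement $A^{\ast}\setminus \mathcal I$ is a final segment of $A^{\ast}$. By Fact~\ref{fact:ideal}, $\mathcal I$ is Ferrers, that is $\rho_{\mathcal I}$ is a Ferrers relation; by part~(1) of Fact~\ref{fact:complement} we have $\rho_{A^{\ast}\setminus \mathcal I}=A^{\ast}\times A^{\ast}\setminus\rho_{\mathcal I}$, and since the complement of a Ferrers relation is Ferrers (as recalled in the text, or directly: condition~(i) of Proposition~\ref{prop:intervalorder} is preserved under complementation, which reverses inclusion and hence sends chains to chains), $A^{\ast}\setminus \mathcal I$ is a Ferrers language. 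Combining the two points, $A^{\ast}\setminus \mathcal I$ is a Ferrers final segment.

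Then, given $F=(A^{\ast}\setminus \mathcal I_{1})\cdots(A^{\ast}\setminus \mathcal I_{n})$ with each $\mathcal I_{j}$ an ideal of $A^{\ast}$, every factor is a Ferrers final segment by the previous step, so Corollary~\ref{productFerrers} gives at once that $F$ is a Ferrers final segment, which is the assertion. There is no real obstacle here; the one point to keep in mind is that the complement is taken inside $A^{\ast}$, which is exactly what makes each factor a \emph{final} segment and hence makes Corollary~\ref{productFerrers} applicable. In this sense the proposition is simply a corollary of Fact~\ref{fact:ideal} and Corollary~\ref{productFerrers}, as the wording preceding its statement already indicates.
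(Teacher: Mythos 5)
Your proof is correct and follows exactly the route the paper intends: the paper derives this proposition in one line from Corollary~\ref{productFerrers} and Fact~\ref{fact:ideal}, and your write-up simply makes explicit the two intermediate observations (that the complement of an ideal is a final segment, and that complements of Ferrers languages are Ferrers). Nothing is missing.
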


According to Corollary \ref{cor:intersection of two}, for every $u\in A^{*}$, $A^*\setminus\{u\}$ is an union of two Ferrers languages, hence:

\begin{proposition}
Every language is  a union, possibly infinite,  of a family of intersections of two Ferrers languages. 
\end{proposition}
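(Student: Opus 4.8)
The plan is to reduce the statement to the singleton case, which is exactly the content of Corollary~\ref{cor:intersection of two}. Given an arbitrary language $L\subseteq A^{\ast}$, I would write it as the union of its one-element subsets,
\[
L=\bigcup_{u\in L}\{u\}.
\]
By Corollary~\ref{cor:intersection of two}, each singleton equals $\{u\}=\downarrow u\cap\uparrow u$, the intersection of two Ferrers languages (namely the ideal $\downarrow u$ and the filter $\uparrow u$ of $A^{\ast}$, both Ferrers by Fact~\ref{fact:ideal}, equivalently by Corollary~\ref{cor:initial-final ferrers}). Hence $L$ is the union of the family $\bigl(\downarrow u\cap\uparrow u\bigr)_{u\in L}$, every member of which is an intersection of two Ferrers languages, which is precisely the assertion. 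Two bookkeeping points deserve a word: first, the indexing family may be infinite, but this is harmless since the statement explicitly allows infinite unions, so no finiteness or rationality argument intervenes; second, the empty language is covered by the empty union (alternatively, $\emptyset$ is itself Ferrers, so $\emptyset=\emptyset\cap\emptyset$ does the job directly).

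I do not expect any genuine obstacle here: the whole weight of the proposition already sits in Corollary~\ref{cor:intersection of two}, and the present statement is obtained from it merely by taking a union over the members of $L$; if anything, the point is the structural contrast with relations, where every relation is an intersection of Ferrers relations, whereas here one must pass to unions of intersections. It may be worth recording, as a dual remark, that since the complement of a Ferrers language is Ferrers (Fact~\ref{fact:complement}(1)), the second part of Corollary~\ref{cor:intersection of two} together with $L=\bigcap_{u\notin L}(A^{\ast}\setminus\{u\})$ shows symmetrically that every language is also an intersection, possibly infinite, of unions of two Ferrers languages.
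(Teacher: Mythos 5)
Your proof is correct and is exactly the paper's intended argument: the proposition is stated as an immediate consequence of Corollary~\ref{cor:intersection of two}, obtained by writing $L=\bigcup_{u\in L}\{u\}$ with each $\{u\}=\downarrow u\cap\uparrow u$ an intersection of two Ferrers languages. Your bookkeeping remarks (the empty union for $L=\emptyset$, and the dual statement by complementation) are harmless additions consistent with the paper.
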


According to Corollary \ref{cor:wqo-ferrers}, if the alphabet is w.q.o. (and particularly, if it is finite), Boolean  combinations of final segments, alias piecewise testable languages,   are Boolean combination of  rational Ferrers languages.  

$\bullet$ \emph{If the alphabet $A$ consists of one letter, say $a$, these two Boolean algebras coincide}. Indeed, if $L$ is Ferrers then with respect to the natural order on $A^*:= \{a^n: n\in \N\}$, it is convex. Otherwise, there are $n<p<m\in \N$ such that $a^n, a^m\in L$, $a^p\not \in L$. Choosing $n,m$ with the difference $m-n$ minimum, we have $a^q\not \in L$ for $n<p<m$. Let $X:= (a^n)^{-1}L$ and $Y:=(a^{m-1})^{-1}L$. Then $\Box$ and $a^{m-n}\in X$ but $\Box \not \in X$, whereas $a\in Y$ but $\Box \not \in Y$. Hence, $X$ and $Y$ are incomparable with respect to inclusion, contradicting the fact that $L$ is Ferrers. Being convex, $L$ is the intersection of an initial segment segment with a final segment, thus it is piecewise testable. 

$\bullet$\emph{If $A:= \{a,b\}$, with $a\not =b$, then  $L:= A^*b$ is rational and Ferrers and not piecewise testable}.  Indeed, let $Q_L:= \{u^{-1}L: u\in A^*\}$, then $Q_L$ has two elements, namely $L$ and $L':= \{\Box\} \cup L$ (in fact $a^{-1} L=L$, $b^{-1}L=L'$, $a^{-1}L'=L$, $b^{-1}L'=L'$. The fact that $L$ is not piecewise testable follows from Stern'criterium (\cite{stern} Theorem 1.2): the sequence   $(u_n)_{n\in \N}$  defined by $u_{2n}:= (ba)^nb$ and $u_{2n+1}:= ( ba)^{n+1}$ is increasing for the subword ordering while $a_{2n} \in L$ and $a_{2n+1}\not \in L $ for all $n\in \N$. 

The language $L$ above has dot-depth one. Is this general? That is:

\begin{question}Do  rational Ferrers languages have dot-depth one? 
\end{question}

We relate Ferrers piecewise testable languages  and structural properties of transition systems. 

\begin{theorem}\label{prop:ferrerslanguage}
\ Let $F$ be a final segment of A$^{\ast }.$ The following
conditions are equivalent:
\begin{enumerate}
\item  $F$ is Ferrers; 
\item  The  space $\mathcal S_{F}$ is linearly
orderable.
\end{enumerate}
\end{theorem}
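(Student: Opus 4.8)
The plan is to connect the combinatorial notion of a Ferrers language with the order-theoretic characterizations already assembled, going through the incidence structure $R_F:=(A^{*},\rho_F,A^{*})$. Recall that, by the discussion in Section \ref{section heyting}, the injective envelope $\mathcal S_F$ is (as a lattice) exactly the Galois lattice $\gal(R_F)=\{FY^{-1}: Y\subseteq A^*\}$, ordered by inclusion, and that its metric structure is the one induced from $\mathcal H=\mathbf F(A^*)$. So the statement reduces to: $F$ is Ferrers $\iff$ $\mathcal S_F$, with the induced metric, is linearly orderable.

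First I would handle the implication $(1)\Rightarrow(2)$. If $F$ is Ferrers, then $\rho_F$ is a Ferrers relation, so by Proposition \ref{prop:intervalorder}, condition $(iii)$, the Galois lattice $\gal(R_F)=\mathcal S_F$ is \emph{totally ordered} by inclusion. In particular the order induced on $\mathcal S_F$ by the order of $\mathcal H$ is a linear order; by Proposition \ref{prop:linearly} this is exactly the condition under which $\mathcal S_u$ (here $u=F$) is linearly orderable. So this direction is essentially immediate once we have identified $\mathcal S_F$ with $\gal(R_F)$ and invoked the equivalence of $(iii)$ and $(vii)$ in Proposition \ref{prop:intervalorder}, noting that $(vii)$ for $R_F$ is precisely the defining condition $(4)$ of a Ferrers language.

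For the converse $(2)\Rightarrow(1)$, suppose $\mathcal S_F$ is linearly orderable. By Proposition \ref{prop:linearly}, the linear order $\preceq$ must coincide with the order on $\mathcal H$ (i.e.\ reverse inclusion of final segments) or its reverse; either way $\mathcal S_F=\gal(R_F)$ is \emph{totally ordered by inclusion}. Again by Proposition \ref{prop:intervalorder} (condition $(iii)\Rightarrow(vii)$), this forces $\rho_F$ to be a Ferrers relation, i.e.\ $F$ is a Ferrers language. The one point requiring a little care here is the direction of Proposition \ref{prop:linearly}: it says linear orderability of $\mathcal S_u$ \emph{forces} the order to be (the restriction of) the $\mathcal H$-order or its reverse, which is what lets us pass from "abstractly linearly orderable" back to "the inclusion order is itself linear."

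The main obstacle, such as it is, is bookkeeping rather than mathematics: one must be careful that the metric on $\mathcal S_F$ used in the definition of "linearly orderable" (involving $d(x,z)\preceq d(x,y)$, $d(z,y)\preceq d(x,y)$) is the induced $\mathcal H$-metric $d_{\mathcal H}$, and that Propositions \ref{prop:linearly} and the identification $\mathcal S_F\cong\gal(R_F)$ refer to this same structure. Once the dictionary $\mathcal S_F=\mathcal S_F$ (the notation $\mathcal S_F$ is used both for the injective envelope and as $\mathcal S_v$ with $v=F$) is pinned down, the proof is simply: linearly orderable $\iff$ (by Prop.\ \ref{prop:linearly}) inclusion-totally-ordered $\iff$ (by Prop.\ \ref{prop:intervalorder}) $\rho_F$ Ferrers $\iff$ $F$ Ferrers.
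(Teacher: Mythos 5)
Your proof is correct and is exactly the paper's argument: the paper's entire proof reads ``Proposition \ref{prop:intervalorder} and Proposition \ref{prop:linearly},'' and you have simply spelled out how the identification $\mathcal S_F\cong\gal(R_F)$ lets condition $(vii)$ of Proposition \ref{prop:intervalorder} (the Ferrers condition for $\rho_F$) pass through condition $(iii)$ (total ordering of the Galois lattice by inclusion) to the linear-orderability criterion of Proposition \ref{prop:linearly}. Nothing to change.
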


\begin{proof}
Proposition \ref{prop:intervalorder} and Proposition \ref{prop:linearly}.
\end{proof}

\begin{corollary}\label{prop:indec-linearly}
The finitely indecomposable absolute retracts are linearly orderable.
\end{corollary}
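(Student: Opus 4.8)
The plan is to identify a finitely indecomposable absolute retract with some injective envelope $\mathcal S_{F}$, determine which final segments $F$ can occur, and then apply the Ferrers characterisation of Theorem~\ref{prop:ferrerslanguage}.

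First I would reduce to the normal form. Let $E$ be a finitely indecomposable absolute retract over $\mathbf F(A^{*})$. By Theorem~\ref{thm:indec}, $E$ is isomorphic to the injective envelope of a two-element metric space $\{x,y\}$ whose distance $F:=d(x,y)$ is join-irreducible in $\mathcal H=\underline{\mathbf F}(A^{*})$; using the representation $\mathcal S_{F}$ of this envelope, it then suffices to prove that $\mathcal S_{F}$ is linearly orderable, which by Theorem~\ref{prop:ferrerslanguage} amounts to proving that $F$ is a Ferrers language.

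Next I would pin down the join-irreducible elements of $\underline{\mathbf F}(A^{*})$. Since its order is the reverse of inclusion, its join is intersection and its least element is $A^{*}$; hence $F$ is join-irreducible precisely when $F\neq A^{*}$ and $F$ is not the intersection of two final segments that properly contain it. Passing to complements, this says that $I:=A^{*}\setminus F$ is a nonempty initial segment of $A^{*}$ that is not the union of two proper initial subsegments, i.e.\ $I$ is a join-irreducible element of the lattice of initial segments of $A^{*}$; as recalled just before Fact~\ref{fact:ideal}, these are exactly the ideals of $A^{*}$. (The elementary directedness argument can be inserted here if needed: an ideal cannot be split into two proper initial subsegments, and conversely if $I$ fails to be up-directed, with $u,v\in I$ having no common upper bound in $I$, then $\{z\in I:z\not\ge u\}$ and $\{z\in I:z\not\ge v\}$ are proper initial subsegments whose union is $I$.)

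Finally I would conclude: $I=A^{*}\setminus F$ is an ideal of $A^{*}$, hence Ferrers by Fact~\ref{fact:ideal}; since the complement of a Ferrers relation is again Ferrers (apply $(1)$ of Fact~\ref{fact:complement}), $F=A^{*}\setminus I$ is Ferrers; and Theorem~\ref{prop:ferrerslanguage} yields that $\mathcal S_{F}$, hence $E$, is linearly orderable. I expect the only real obstacle to be the first step: Theorem~\ref{thm:indec} is quoted for \emph{finite} absolute retracts, so the corollary is either to be understood for finite absolute retracts or one must invoke the general (not necessarily finite) form of that normal-form theorem; once the reduction to $\mathcal S_{F}$ with $F$ join-irreducible is granted, everything after is purely formal and uses only facts already established.
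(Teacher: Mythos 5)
Your proposal is correct and follows essentially the same route as the paper: reduce via Theorem~\ref{thm:indec} to $\mathcal S_{F}$ with $F$ join-irreducible, observe that join-irreducibility of $F$ (for the reverse-inclusion order) means $A^{*}\setminus F$ is an ideal, apply Fact~\ref{fact:ideal} and closure of Ferrers languages under complement, and conclude with Theorem~\ref{prop:ferrerslanguage}; you merely supply the directedness argument that the paper leaves implicit. Your caveat about Theorem~\ref{thm:indec} being stated for finite absolute retracts is a fair observation about the paper's own hypotheses, not a gap in your argument.
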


\begin{proof}
Let $E$ be a finitely indecomposable absolute retract. From Theorem \ref{thm:indec}, $E$
is isomorphic to $\mathcal{S}_{F}$ where $F$ is   join-irreducible in the lattice $\mathbf F(A^{\ast})$ ordered by reverse of inclusion. The fact that $F$ is   join-irreducible amounts to the fact that $A^{\ast
}\setminus F$ is an ideal of  $A^{\ast}$. According to Fact \ref{fact:ideal}, $A^{\ast
}\setminus F$ is Ferrers. Hence, its complement $F$ is Ferrers. The result follows from
Theorem  \ref{prop:ferrerslanguage}.
\end{proof}


\end{document}